\newcommand\pig[1]{\scalerel*[5pt]{\big#1}{%
  \ensurestackMath{\addstackgap[1.5pt]{\big#1}}}}
\newcommand{\compactlist}[1]{\setlength{\itemsep}{0pt} \setlength{\parskip}{0pt} \setlength{\leftskip}{-0.#1em}}
\numberwithin{equation}{section}
\DeclareRobustCommand{\SkipTocEntry}[5]{}
\theoremstyle{plain}
\newtheorem{theorem}{Theorem}[section]
\newtheorem{prop}[theorem]{Proposition}
\newtheorem{lem}[theorem]{Lemma}
\newtheorem{cor}[theorem]{Corollary}
\theoremstyle{definition}
\newtheorem{dfn}[theorem]{Definition}
\newtheorem{rem}[theorem]{Remark}
\newtheorem{notation}[theorem]{Notation}
\newcommand{\ahha}{{\scriptscriptstyle{A}}}
\newcommand{\emme}{{\scriptscriptstyle{M}}}
\newcommand{\enne}{{\scriptscriptstyle{N}}}
\newcommand{\uhhu}{{\scriptscriptstyle{U}}}
\newcommand{\pehhe}{{\scriptscriptstyle{P}}}
\newcommand{\ikks}{{\scriptscriptstyle{X}}}
 \newcommand{\N}{{\mathbb{N}}}
\newcommand{\ga}{\alpha}
\newcommand{\gd}{\delta} 
\newcommand{\gD}{\Delta} 
\newcommand{\gve}{\varepsilon} 
\newcommand{\gvf}{\varphi}  
\newcommand{\gl}{\lambda}
\newcommand{\gs}{\sigma}
\newcommand{\gt}{\theta} 
\newcommand{\gvt}{\vartheta} 
\newcommand{\cC}{{\mathcal C}}
\newcommand{\cD}{{\mathcal D}}
\newcommand{\cE}{{\mathcal E}}
\newcommand{\cM}{{\mathcal M}}
\newcommand{\cZ}{{\mathcal Z}}
\newcommand{\Hom}{\operatorname{Hom}}
\newcommand{\HOM}{\operatorname{HOM}}
\newcommand{\Tor}{\operatorname{Tor}}
\newcommand{\Ext}{\operatorname{Ext}}
\newcommand{\Coext}{\operatorname{Coext}}
\newcommand{\Cotor}{\operatorname{Cotor}}
\newcommand{\id}{{\rm id}}
\newcommand{\tr}{{\rm tr}}
\newcommand{\due}[3]{{}_{{#2 }} {#1}_{{ #3}}\,}    % Zweifachindex
\newcommand{\pl}{\partial}
\newcommand{{\Hl}}{{H^{\ell}}} 
\newcommand{{\mHop}}{{m_{H^{\rm op}}}} 
\newcommand{{\Hop}}{{H^{\rm op}}} 
\newcommand{{\mUop}}{{m_{U^{\rm op}}}} 
\newcommand{{\mUopp}}{{m_{\scriptscriptstyle{U^{\rm op}}}}} 
\newcommand{{\Uop}}{{U^{\rm op}}}
\newcommand{{\mVop}}{{m_{V^{\rm op}}}} 
\newcommand{{\Vop}}{{V^{\rm op}}}  
\newcommand{{\Ae}}{{A^{\rm e}}}
\newcommand{{\Be}}{{B^{\rm e}}}
\newcommand{{\Ue}}{{U^{\rm e}}}
\newcommand{{\He}}{{H^{\rm e}}}
\newcommand{{\Aop}}{{A^{\rm op}}}
\newcommand{{\Aope}}{({A^{\rm op}})^{\rm e}}
\newcommand{{\Aopl}}{{A^{\rm op}_\pl}}
\newcommand{{\Bop}}{{B^{\rm op}}}
\newcommand{{\Bopp}}{{\scriptscriptstyle{{B^{\rm op}}}}}
\newcommand{{\Bope}}{({B^{\rm op}})^{\rm e}}
\newcommand{{\Bpl}}{{B_\pl}}
\newcommand{{\op}}{{{\rm op}}}
\newcommand{{\coop}}{{{\rm coop}}}
\newcommand{{\sop}}{{*^{\rm op}}}
\newcommand{{\co}}{{{\rm co}}}
\newcommand{\kmod}{k\mbox{-}\mathbf{Mod}}                     %
\newcommand{\amoda}{A^{\rm e}\mbox{-}\mathbf{Mod}}                  %
\newcommand{\umod}{U\mbox{-}\mathbf{Mod}}                     %  Modul-Kategorien
\newcommand{\modu}{\mathbf{Mod}\mbox{-}U}         %
\newcommand{\yd}{{}^\uhhu_\uhhu\mathbf{YD}}                     %  Modul-Kategorien
\newcommand{\ayd}{{}^\uhhu\!\mathbf{aYD}_\uhhu}                     %  Modul-Kategorien
\newcommand{\sayd}{{}^\uhhu\!\mathbf{saYD}_\uhhu}                     %  
\newcommand{\cayd}{ {}_\uhhu \mathbf{aYD}^{\scriptscriptstyle{\rm contra-}\uhhu}}                     %
\newcommand{\csayd}{ {}_\uhhu \mathbf{saYD}^{\scriptscriptstyle{\rm contra-}\uhhu}}                     %  
\newcommand{\contramodu}{\mathbf{Contramod}\mbox{-}U}
\newcommand{\comodu}{\mathbf{Comod}\mbox{-}U}         
\newcommand{\ucomod}{U\mbox{-}\mathbf{Comod}}
\newcommand{\lact}{\smalltriangleright}                  
\newcommand{\ract}{\smalltriangleleft}
\newcommand{\blact}{\blacktriangleright}  
\newcommand{\bract}{\blacktriangleleft}
\newcommand{{\gog}}{{G \rightrightarrows G_0}}
\newcommand{{\rra}}{\rightrightarrows}
\newcommand{{\lra}}{\ \longrightarrow \ }
\newcommand{{\lla}}{\ \longleftarrow \ }
\newcommand{{\lma}}{\ \longmapsto \ }
\newcommand{{\bull}}{{\scriptscriptstyle{\bullet}}}
\newcommand{{\qqquad}}{{\quad\quad\quad}}
\newcommand{\Aopp}{{\scriptscriptstyle{\Aop}}}
\newsavebox{\foobox}
\newcommand{\pmact}{\mbox{ \raisebox{-1pt}{\ding{226}} }}
\newcommand{\mpact}{\mbox{ \raisebox{-1pt}{\ding{227}} }}
\newcommand{\umact}{\mbox{\hspace*{-1.5pt} \scalebox{0.75}{\rotatebox{137}{\raisebox{-2pt}{ \manfilledquartercircle}} }\!}}
\newcommand{\copact}{\mbox{ \raisebox{6pt}{\rotatebox{180}{{\ding{226}}}} }}
\newcommand{\mpsqact}{\mbox{ \scalebox{0.85}{\rotatebox{180}{\raisebox{-6.0pt}{\ding{238}}} }}}
\newcommand{\sma}[1]{\raisebox{1pt}{${\scriptstyle ({#1})}$}}
\newcommand{\smap}{\raisebox{0.3pt}{${{\scriptscriptstyle {[+]}}}$}}
\newcommand{\smam}{\raisebox{0.3pt}{${{\scriptscriptstyle {[-]}}}$}}
\newcommand{\mancino}{{\,\scalebox{0.7}{\rotatebox{90}{\mancone}}\,}}
\newcommand{\fren}{\mbox{\hspace*{1pt}\scalebox{0.75}{\ding{228}}}}
\newcommand{\frenop}{\mbox{\raisebox{5pt}{\rotatebox{180}{{\scalebox{0.75}{\ding{228}}}}}\hspace*{1pt}}}
\begin{document}

\title{Centres, trace functors, and cyclic cohomology}% for Hopf algebroids} 

\author{Niels Kowalzig}

\begin{abstract}
We study the biclosedness of the monoidal categories of modules and comodules over a (left or right) Hopf algebroid, along with their bimodule category centres over the respective opposite categories and a corresponding categorical equivalence to anti Yetter-Drinfel'd contramodules and  anti Yetter-Drinfel'd modules, respectively. This is directly connected to the existence  of a trace functor on the monoidal categories of modules and comodules in question, which in turn allows to recover (or define) cyclic operators enabling cyclic cohomology.   
\end{abstract}

\address{Dipartimento di Matematica, Universit\`a di Roma Tor Vergata, Via della Ricerca Scientifica 1, 00133 Roma, Italy}

\email{niels.kowalzig@uniroma2.it}

%\address{Dipartimento di Matematica, Universit\`a di Napoli Federico II, Via Cintia, 80126 Napoli, Italy}
%
%\email{niels.kowalzig@unina.it}

\keywords{Closed monoidal categories, bimodule categories, centres, cyclic cohomology, contramodules, Hopf algebroids}

\subjclass[2020]{18D15, 18M05, 16T05, 16T15, 19D55, 16E40
%{18D50, 16E40, 19D55, 16E45, 16T05.}
}

\maketitle

\setcounter{tocdepth}{2}
\tableofcontents

\section{Introduction}

\addtocontents{toc}{\protect\setcounter{tocdepth}{1}}

Introducing potential coefficients in cyclic homology or cohomology typically asks for more than one algebraic structure in order to obtain from the underlying chain or cochain complex a paracyclic (or duplicial) object in the sense of Connes \cite{Con:CCEFE}.
For example,
in those cyclic theories induced by a Hopf structure on the underlying ring or coring, 
coefficients might be simultaneously modules and comodules or simultaneously modules and contramodules, whereas the underlying (simplicial) complex usually only needs one of
these structures.
%them.
However, 
the presence of two structures instead of one may not always be immediately recognised as one of them may be trivial and therefore invisible. This, for example, sometimes happens for bialgebras or bialgebroids with special properties, such as commutativity or cocommutativity.

Whereas up to this point no compatibility between these two algebraic structures is required,
passing from paracyclic to cyclic objects, {\em i.e.}, those in which
the cyclic operator powers to the identity,
in general asks for some sort of compatibility condition, which leads to the notion of {\em (stable anti) Yetter-Drinfel'd modules} resp.\ {\em stable anti Yetter-Drinfel'd contramodules} in the two cases of module-comodule resp.\ module-contramodule mentioned above, which expresses what happens if action is followed by coaction, and vice versa, resp.\ contraaction followed by action, and vice versa again; see, just to name a few,
\cite{BenPerWit:YDMUCT, BoeSte:CCOBAAVC, Brz:HCHWCC, BulCaePan:YDCFQHA, Dri:QG, JarSte:HCHARCHOHGE, HajKhaRanSom:SAYD, Kay:BCHWC, Kow:WEIABVA, PanSta:GAYDMACOABTC, RadTow:YDCATAAB, Yet:QGAROMC}
for these notions in various contexts. For example, as explained in \cite{Kow:ANCCOTCDOE},
without specifying  the technical details here,
if $U$ is a left Hopf algebroid (for example, a Hopf algebra or the enveloping algebra $\Ae$ of an associative algebra or still the enveloping algebra of a Lie algebroid) with respect to which $N$ is a Yetter-Drinfel'd module, $M$ a stable anti Yetter-Drinfel'd module, and $P$ a stable anti Yetter-Drinfel'd contramodule,
then (under suitable projectivity resp.\ flatness assumptions), the (co)chain complexes computing the various derived functors
$\Tor_\bull^\uhhu(N,M)$,
$\Ext^\bull_\uhhu(N,P)$,
$\Cotor_\bull^\uhhu(N,M)$, 
and
$\Coext_\bull^\uhhu(N,P)$
can be made into cyclic modules, which, in particular, implies the existence of (co)cyclic differentials of degree $\pm 1$:
\begin{small}
\begin{equation*}
  \begin{array}{rlclrlcl}
B \colon \!\!\!&\Tor_\bull^\uhhu(N,M) \!\!\!&\to\!\!\!& \Tor_{\bull+1}^\uhhu(N,M),
\quad & \quad
B \colon \!\!\!&\Cotor^\bull_\uhhu(N,M) \!\!\!&\to\!\!\!& \Cotor^{\bull-1}_\uhhu(N,M),
\\[1pt]
B \colon \!\!\!&\Ext^\bull_\uhhu(N,P) \!\!\!&\to\!\!\!& \Ext^{\bull-1}_\uhhu(N,P),
\quad & \quad
B \colon \!\!\!&\Coext_\bull^\uhhu(N,P) \!\!\!&\to\!\!\!& \Cotor_{\bull+1}^\uhhu(N,P),
  \end{array}
\end{equation*}
\end{small}
by abuse of notation all denoted by the same symbol $B$ here, that is, the (induced) {\em Connes-Rinehart-Tsygan (co)boundary} in its various guises.

\subsection{Aims and objectives}
In contrast to Yetter-Drinfel'd kind of objects being interpreted as monoidal centres \cite{Schau:DADOQGHA}, a categorical understanding of {\em anti} Yetter-Drinfel'd objects is only beginning to emerge.
The main objective of this article is to embed the two cases of anti Yetter-Drinfel'd objects mentioned above in a more categorical setting, inspired by and generalising the ideas in \cite{Sha:OTAYDMCC, KobSha:ACATCCOQHAAHA} to the realm of left resp.\ right Hopf algebroids, which, as already hinted at, allow for the simultaneous generalisation of various (co)homology theories such as Hopf algebras, associative algebras, Lie algebroids as well as {\em full} Hopf algebroids, that is, those with an antipode in the sense of \cite{BoeSzl:HAWBAAIAD}.

More precisely, whereas it is, as just mentioned,  well-known that the category of Yetter-Drinfel'd modules over a bialgebroid $U$
is equivalent to the (weak) monoidal centre of the category of left $U$-modules \cite[Prop.~4.4]{Schau:DADOQGHA}  
as is the case for bialgebras, we are going to show in the following that anti Yetter-Drinfel'd modules and anti Yetter-Drinfel'd contramodules correspond to the {\em bimodule category centre} of the category of left $U$-comodules resp.\ left $U$-modules over their respective opposite categories.
The main difficulty in dealing here with left resp.\ right Hopf algebroids 
is, apart from the noncommutativity of the base ring, the absence of an antipode map which leads to nontrivial associativity constraints in the bimodule categories in question and hence to considerably more laborious computations, in striking
contrast to the case of Hopf algebras (or even full Hopf algebroids for that matter).
%this even has implications when it comes to discuss the relationship between stability and centrality, which does not seem to exactly parallel the Hopf algebraic situation.

On the other hand, the sort of disheartening abundance of possibilities for defining, for example, anti Yetter-Drinfel'd modules in the Hopf algebra case (left-left, left-right, and so on) in the left Hopf algebroid case is instantly limited to one (all other variants not being well-defined) and no further equivalences need to be established (nor discussed).

\subsection{Main results}
Corresponding to the general idea just outlined, 
assembling Lemma \ref{keineDokumente} \&  Corollary \ref{wasasesam1} with Theorem \ref{tegel1}, based on a general categorical approach
exhibited in \S\ref{cat}, 
in \S\ref{heizungs} we essentially show
(see the main text for all details, notation, and the precise statements):

\begin{theorem}
  \label{eins}
  Let a left bialgebroid $(U,A)$ in addition be left Hopf. 
  Then the category $\umod$ of left $U$-modules is biclosed, which by adjunction induces on it the structure of a bimodule category over its opposite category.
  The category of anti Yetter-Drinfel'd contramodules over $U$ is equivalent to the centre of this bimodule category.
\end{theorem}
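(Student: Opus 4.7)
The plan is to proceed in three stages, matching the structure of the statement. First, I would establish the biclosedness of $\umod$ by constructing explicit internal Hom functors. Since $(U,A)$ is a left bialgebroid, the category $\umod$ is monoidal via the tensor product over $A$, where each $U$-module carries an $A$-bimodule structure coming from the source and target maps. The left Hopf property provides a translation map $U \to U \otimes_{\Aop} U$, $u \mapsto u_+ \otimes u_-$, which is the essential tool for constructing both a left internal Hom $\HOM_\ell(M,-)$ and a right internal Hom $\HOM_r(M,-)$. Concretely, I would take $\Hom_A(M,N)$ (with two different choices of $A$-action to tensor over) and equip it with $U$-actions twisted by the two components of the translation map; the tensor-Hom adjunctions then follow by a direct computation from the standard axioms relating the translation map to the coproduct, the counit, and the multiplication.

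Second, once biclosedness is in hand, the general categorical machinery of \S\ref{cat} automatically turns $\umod$ into a bimodule category over $\umod^{\op}$ on both sides: the two internal Homs provide the two external actions, and the associativity and unit constraints are induced by the adjunctions themselves. Here, however, one must be particularly careful, because in the absence of an antipode these associators genuinely depend on the translation map and the noncommutativity of $A$, and do not reduce to identity morphisms as they would in the Hopf-algebra case. I would write out the pentagon and middle-associativity constraints explicitly in terms of $u_+$ and $u_-$, so that they are available for the final step.

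Third, I would identify the centre of this bimodule category with $\cayd$, the category of anti Yetter-Drinfel'd contramodules. An object of the centre is a pair $(P,\tau)$ consisting of a left $U$-module $P$ together with a natural family of morphisms relating the left and right internal-Hom actions of $M$ on $P$. Specialising the naturality of $\tau$ to $M = U$ viewed as a module over itself, and using the Yoneda-type identifications available for biclosed categories, produces a linear map out of $\Hom(U,P)$ into $P$ — which is exactly the contraaction datum defining a $U$-contramodule structure on $P$. The hexagon axioms for a centre element, once spelled out in terms of the translation map and the associators computed in step two, translate term-by-term into the compatibility between left action and contraaction that characterises the anti Yetter-Drinfel'd condition. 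Functoriality and full faithfulness of this assignment $(P,\tau) \mapsto P$ then give the claimed equivalence, assembling Lemma \ref{keineDokumente}, Corollary \ref{wasasesam1}, and Theorem \ref{tegel1}.

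The hard part will be the last translation: verifying that the centre's hexagon conditions correspond \emph{exactly}, not merely up to some unforeseen twist, to the anti-YD contramodule compatibility. The calculation involves shuffling translation-map elements $u_+$ and $u_-$ past associators which are themselves built out of the translation map, all over a noncommutative base $A$. I would isolate this as a separate lemma, writing both sides of the desired identity in elementwise form and then reducing one to the other via the characterising identities of a left Hopf algebroid — in essence carrying out the program of \cite{Sha:OTAYDMCC, KobSha:ACATCCOQHAAHA} without the convenient simplifications afforded by a genuine antipode.
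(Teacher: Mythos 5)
Your overall architecture --- internal Homs, then the adjoint bimodule-category structure over $\umod^{\op}$, then the identification of the centre with anti Yetter-Drinfel'd contramodules via the hexagon --- is the same as the paper's (Lemma \ref{keineDokumente}, Corollary \ref{wasasesam1}, Theorem \ref{tegel1}). But your first step contains a genuine error that the rest of the argument inherits. You propose to realise \emph{both} internal Homs as $\Hom_\ahha(M,N)$ (for suitable choices of $A$-action) with $U$-actions twisted by the translation map $u\mapsto u_+\otimes u_-$. Only the \emph{right} internal Hom admits such a description: $\hom^r(N,M)=\Hom_\Aopp(N,M)$ with $(u\pmact g)(n)=u_+g(u_-n)$, and this is exactly where the left Hopf hypothesis enters. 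The analogous twisted description of the \emph{left} internal Hom, namely $(u\mpact g)(n)=u_{[+]}g(u_{[-]}n)$ on $\Hom_\ahha(N,M)$, requires the inverse of the \emph{other} Hopf--Galois map $\ga_r$, i.e.\ a \emph{right} Hopf structure (the analogue of $S^{-1}$), which is not among your hypotheses; see Remarks \ref{hours} and \ref{hours2}. With only a left Hopf structure the left internal Hom must instead be taken as $\hom^\ell(N,M)=\Hom_\uhhu(N\otimes_\ahha \due U \lact {},M)$ with $U$ acting by right multiplication in the extra tensor slot --- a construction that in fact needs no Hopf structure at all, which is why the asymmetry between the two internal Homs is unavoidable here.

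This is not a cosmetic point: your steps two and three propose to write the associators and the hexagon condition explicitly ``in terms of $u_+$ and $u_-$'' on the twisted Hom-spaces and to read off the contraaction and the aYD compatibility from them. With the wrong model for $\hom^\ell$, the middle associator $\gvt$, the contraaction $\gamma(g)=(\tau_\uhhu g)(1\otimes_\ahha 1)$ (a map $\Hom_\Aopp(U_\ract,M)\to M$, which lands naturally in the world of contramodules precisely because $\hom^\ell(U,M)=\Hom_\uhhu(U\otimes_\ahha U,M)$), and the hexagon computation all change shape, and the term-by-term match with the compatibility \eqref{nawas1} that you are counting on would not come out. The fix is to replace the uniform twisted-Hom ansatz by the asymmetric pair above and redo the translation; the remaining outline is then sound.
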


In particular, any {\em stable} anti Yetter-Drinfel'd contramodule over $U$ can be seen as an object in a certain full subcategory of this centre.  By virtue of this result, in Theorems \ref{ganzleerheute} \& \ref{lalacrimosa}, we can not only define a so-called ({\em weak}) {\em trace functor} in the sense of Kaledin \cite{Kal:TTAL} on the category of left $U$-modules, but also explicitly construct a cyclic operator in the sense of Connes \cite{Con:CCEFE}, that is:

\begin{theorem}
  \label{zwei}
If a left bialgebroid $(U,A)$ is  
left Hopf and $M$ a stable anti Yetter-Drinfel'd contramodule over $U$ with contraaction $\gamma$, then $\Hom_\uhhu(-, M)$ yields a trace functor $\umod \to \kmod$, which, in particular, 
implies an isomorphism 
$$
\Hom_\uhhu(X \otimes_\ahha Y, M) \simeq \Hom_\uhhu(Y \otimes_\ahha X, M)
$$
for any $X, Y \in \umod$.
Its explicit form induces the cocyclic operator
$$
(\uptau f)(u^1, \ldots, u^q) = \gamma\big(((u^1_{(2)} \cdots u^{q-1}_{(2)} u^q) \pmact  
  f)(-, u^1_{(1)}, \ldots, u^{q-1}_{(1)}) \big)
  $$
   on the cochain complex $ C^\bullet(U,M) = \Hom_\Aopp(U^{\otimes_\Aopp\bullet}, M) $,  
which (under suitable projectivity assumptions) computes $\Ext^\bull_\uhhu(A,M)$.  
\end{theorem}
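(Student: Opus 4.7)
The plan is to deduce the whole statement from Theorem \ref{eins} combined with the general correspondence, set up in \S\ref{cat}, between objects in the centre of a bimodule category and (weak) trace functors on the underlying monoidal category. Given a stable anti Yetter-Drinfel'd contramodule $M$, Theorem \ref{eins} identifies $M$ with an object in the centre of $\umod$ viewed as a bimodule category over its opposite, and, thanks to stability, as an object of the pertinent full subcategory. The abstract machinery of \S\ref{cat} then promotes such a centre object to a trace functor. I would verify that the functor underlying this trace datum is $\Hom_\uhhu(-,M)$: the left internal Hom of the biclosed structure on $\umod$ (supplied by Theorem \ref{eins}) is $\Hom_\uhhu(-,-)$, and the half-braiding entering the centre structure on $M$ is controlled precisely by the contraaction $\gamma$, so chasing through the adjunctions pins down the underlying functor as claimed. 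The defining property of a trace functor then produces the natural isomorphism $\Hom_\uhhu(X \otimes_\ahha Y, M) \simeq \Hom_\uhhu(Y \otimes_\ahha X, M)$.

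For the explicit cocyclic operator, I would specialise the trace swap to $X = U^{\otimes_\Aopp (q-1)}$ and $Y = U$, and use the free-module adjunction $\Hom_\uhhu(U \otimes_\Aopp U^{\otimes_\Aopp q}, M) \simeq \Hom_\Aopp(U^{\otimes_\Aopp q}, M) = C^q(U,M)$ to reinterpret the resulting endomorphism as an operator on the cochain complex. That swap isomorphism is assembled out of the contraaction $\gamma$ (entering from the centre half-braiding) together with the bimodule associator, the latter being written — in the absence of an antipode — in terms of the translation map of the left Hopf structure. Unwinding these slot by slot should produce the product $u^1_{(2)} \cdots u^{q-1}_{(2)} u^q$ acting on $f$ via $\pmact$ inside the argument of $\gamma$, and the shifted evaluation at $(-, u^1_{(1)}, \ldots, u^{q-1}_{(1)})$ on the outside, recovering
\[
(\uptau f)(u^1, \ldots, u^q) = \gamma\bigl(((u^1_{(2)} \cdots u^{q-1}_{(2)} u^q) \pmact f)(-, u^1_{(1)}, \ldots, u^{q-1}_{(1)})\bigr).
\]

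The compatibilities between $\uptau$ and the face and degeneracy maps of $C^\bull(U,M)$ are automatic consequences of the trace-functor axioms applied to the cosimplicial structure carried by $U^{\otimes_\Aopp \bull}$; cyclicity $\uptau^{q+1} = \id$ is where \emph{stability} of $M$ enters, exactly mirroring how stability of an anti-YD object promotes a paracyclic object to a cyclic one. The identification of the cohomology of $C^\bull(U,M)$ with $\Ext^\bull_\uhhu(A,M)$ is then standard once one reads $C^\bull(U,M)$ as $\Hom_\uhhu$ applied to the bar resolution of $A$ as a left $U$-module, provided the relevant $A$-projectivity of $U$ holds. The main obstacle I anticipate is the explicit bookkeeping of the bimodule associators in $\umod$ over its opposite, which, in the left Hopf (but not full Hopf) setting, must be expressed with the translation map rather than with an antipode; it is precisely this bookkeeping that converts the abstract centre axiom on $M$ into the Sweedler-style formula above.
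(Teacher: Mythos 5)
Your proposal takes essentially the same route as the paper: the trace functor is obtained from the stable centre object via Lemma \ref{funkyfunk} combined with Theorem \ref{tegel1}, and the cocyclic operator is recovered by transporting the explicit swap $(\tr f)(p\otimes_\ahha n)=\gamma\big(f(n\otimes_\ahha \sma\cdot\, p)\big)$ to the bar complex through the free-module adjunction and two translation-map reindexings (the maps $\eta$ and $\chi$ of Theorem \ref{lalacrimosa}), exactly the bookkeeping you anticipate. The only slip is your identification of the left internal Hom with $\Hom_\uhhu(-,-)$ --- it is in fact $\Hom_\uhhu(-\otimes_\ahha U,-)$, the trace's underlying functor being the \emph{categorical} Hom $\Hom_\cC(-,M)$ --- but this does not affect the argument.
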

For details of the precise construction and all notation we refer to the main text: let us only mention here that dropping {\em stability} leads (quite in general) to a {\em weak} or {\em nonunital} trace that weakens the resulting cocyclic operator to an only para-cocyclic one.
With one more (mild) technical assumptions mentioned in Remark \ref{quelquechose}, one can even replace $A$ by a Yetter-Drinfel'd module $N$ in the $\Ext$-groups above.

Dually, passing in \S\ref{wartung} to the monoidal category $\ucomod$ of left $U$-comodules in relationship to anti Yetter-Drinfel'd modules, 
assembling the statements of Lemma \ref{keineDokumentecomod} \&  Corollary \ref{wasasesam2} with Theorem \ref{tegel2},
we can summarise:

\begin{theorem}
  \label{drei}
  Let a left bialgebroid $(U,A)$ be simultaneously left and right Hopf. 
  Then, under suitable projectivity assumptions, the category $\ucomod$ is biclosed, which by adjunction induces on it the structure of a bimodule category over its opposite category. The category of anti Yetter-Drinfel'd modules over $U$ is equivalent to the centre of this bimodule category.
  \end{theorem}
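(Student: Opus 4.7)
The plan is to dualise the argument that established Theorem \ref{eins}, assembling the three named ingredients Lemma \ref{keineDokumentecomod}, Corollary \ref{wasasesam2}, and Theorem \ref{tegel2} in roles analogous to Lemma \ref{keineDokumente}, Corollary \ref{wasasesam1}, and Theorem \ref{tegel1} in the module case. The starting observation is that for any left bialgebroid $(U,A)$ the category $\ucomod$ is monoidal with tensor product $\otimes_\ahha$ and diagonal coaction inherited from the comultiplication of $U$, so the substance of the theorem reduces to producing left and right internal homs and then computing the centre of the resulting bimodule category by means of the general apparatus of \S\ref{cat}.

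For biclosedness, the two internal homs are to be carved out of $A$-bimodules of the form $\Hom_\ahha(X,Y)$, equipped with coactions built from the translation maps associated to the left and to the right Hopf structure on $U$ respectively. Concretely, the right Hopf structure provides a canonical comodule structure representing the functor $X \otimes_\ahha (-)$ by adjunction, while the left Hopf structure takes care of $(-) \otimes_\ahha X$; this is why \emph{both} Hopf structures are required here, whereas in the module case of Theorem \ref{eins} only the left one sufficed. The projectivity hypothesis enters to guarantee that these hom-spaces are honest objects of $\ucomod$ and that the tentative adjunction isomorphisms are genuine bijections, which is the content of Lemma \ref{keineDokumentecomod}. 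Corollary \ref{wasasesam2} then packages the adjunctions into the bimodule category structure of $\ucomod$ over its opposite, following the pattern of \S\ref{cat}.

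For the identification of the centre with the category of anti Yetter-Drinfel'd modules I would invoke Theorem \ref{tegel2}. An object of the centre consists of an underlying comodule together with a coherent family of natural transformations $\sigma_X \colon X \otimes_\ahha (-) \to (-) \otimes_\ahha X$ subject to a hexagon-type axiom involving the associators of the bimodule category. A Yoneda-style evaluation at the regular comodule $U$, as already exploited in \S\ref{cat}, extracts from this family a single algebraic datum on the underlying object, which direct inspection of the coactions induced by the two translation maps reveals to be precisely a left $U$-action whose interaction with the given coaction satisfies the anti Yetter-Drinfel'd compatibility; the converse passage from anti Yetter-Drinfel'd modules to central objects is then formal.

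The hard part will be the associator bookkeeping: without a genuine antipode to collapse the noncommutativity of $A$, the two sides of the centrality axiom are twisted by nontrivial coherence morphisms, and matching these twists with the anti Yetter-Drinfel'd relation requires a delicate rewriting that uses both the left and the right translation maps in tandem. A secondary technical point is to verify that the projectivity assumption is precisely the one needed to upgrade the relevant hom-tensor adjunctions to natural isomorphisms rather than mere injections, so that the equivalence in Theorem \ref{drei} is genuinely essentially surjective and not merely fully faithful.
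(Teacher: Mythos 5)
Your high-level architecture matches the paper's: Theorem \ref{drei} is indeed obtained by assembling Lemma \ref{keineDokumentecomod}, Corollary \ref{wasasesam2}, and Theorem \ref{tegel2} in strict analogy with the module case. However, two of your concrete design decisions diverge from what actually works, and these are precisely where the mathematical content lies. First, you propose to realise \emph{both} internal homs as $A$-bimodules of the form $\Hom_\ahha(X,Y)$ with coactions twisted by the two translation maps. This is viable (up to passing to rational morphisms) for the right internal hom, $\HOM^r(N,M)\subseteq\Hom_\Aopp(N,M)$, but it fails for the left one: the paper must take $\HOM^\ell(N,M)\subseteq\Hom^\uhhu(N,M\otimes_\ahha \due U \lact {})$, and Remark \ref{relajante2} shows that the natural identification of this space with $\Hom_\ahha(N,M)$ is not even an $A$-bimodule map, so the simplification you envisage is unavailable even with both Hopf structures present. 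Relatedly, your division of labour between the two Hopf structures is too clean: the right Hopf structure alone yields the right internal homs, whereas the left internal homs need the left Hopf structure for their coaction \emph{and} the right Hopf structure for the adjunction (Remark \ref{relajante1}). Second, you omit the rationality issue entirely: unless $U_\ract$ is finitely generated projective, the would-be coaction on a hom-space only lands in $\Hom(N,U\otimes_\ahha M)$ rather than in $U\otimes_\ahha\Hom(N,M)$, so the internal homs are the proper subspaces of \emph{rational} morphisms (Definitions \ref{rightcom} and \ref{leftcom}); the projectivity hypotheses are there to make the comparison map $\jmath$ injective and the induced right coaction \eqref{soestrene} well defined, not to upgrade adjunctions from injections to bijections.

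The rationality point is not cosmetic: it blocks your ``Yoneda-style evaluation at the regular comodule $U$'' exactly where it is needed. The datum extracted from a central structure is a \emph{right} $U$-action (not a left one, as you write), defined by $mu\coloneqq(\tau^{-1}_\uhhu f_m)(u)$ with $f_m(u)=m_{[0]}\otimes_\ahha m_{[1]}u$; but $f_m$ is \emph{not} rational, i.e.\ $f_m\notin\HOM^\ell(U,M)$, so $\tau^{-1}_\uhhu$ cannot be applied to it directly. The paper circumvents this via the Fundamental Theorem of Comodules (extended to $A$-corings with $U_\ract$ projective), writing $\Hom^\uhhu(N,M\otimes_\ahha \due U \lact {})$ as a limit of $\HOM^\ell(N_\iota,M)$ over finitely generated subcomodules $N_\iota$ and extending $\tau^{-1}$ accordingly; only then can naturality and the hexagon be invoked to prove associativity and unitality of the extracted action. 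Without this step your passage from central structures to anti Yetter-Drinfel'd modules does not get off the ground, so as it stands the proposal establishes neither the well-definedness of the extracted action nor the essential surjectivity of the claimed equivalence.
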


Again,
asking for stability of the anti Yetter-Drinfel'd modules establishes a categorical equivalence to a certain full subcategory of this centre.
Likewise, 
if $M$ is now an anti Yetter-Drinfel'd module, this allows for the construction of a trace functor $\Hom^\uhhu(-, M) \colon  \ucomod \to \kmod$ obeying an analogous commutation property as above, that is
$$
\Hom^\uhhu(X \otimes_\ahha Y, M) \simeq \Hom^\uhhu(Y \otimes_\ahha X, M)
$$
for any $X, Y \in \ucomod$.

Observe the somewhat unexpected asymmetry between the module and comodule case in Theorems \ref{eins} and \ref{drei} with respect to the number of Hopf structures needed; see Remarks \ref{relajante1} and \ref{relajante2} for a possible explanation.

\subsection{Notation and conventions}
\label{schonweniger}
A very brief exposition on bialgebroids and (left and right) Hopf algebroids as well as the respective relevant notation is given in Appendix \ref{bialgebroids} at the end of the main text. At this point, we only want to recall that a left bialgebroid $(U, A)$ is called {\em left} resp.\ {\em right} Hopf algebroid if the corresponding Hopf-Galois map $\ga_\ell$ resp.\ $\ga_r$ is invertible, where 
\begin{equation*}
%  \label{nochmehrRegen}
\begin{array}{rcccccc}
\ga_\ell  \colon  \!\!\!&\due U \blact {} \otimes_{\Aopp} U_\ract &\to& U_\ract  \otimes_\ahha  \due U \lact,
& u \otimes_\Aopp v  &\mapsto&  u_{(1)} \otimes_\ahha u_{(2)}  v, \\
\ga_r  \colon  \!\!\!& U_{\!\bract}  \otimes_\ahha \! \due U \lact {}  &\to& U_{\!\ract}  \otimes_\ahha  \due U \lact,
&  u \otimes_\ahha v  &\mapsto&  u_{(1)}  v \otimes_\ahha u_{(2)}.
\end{array}
\end{equation*}
The Sweedler-type shorthand notations
\begin{equation*}
  \begin{array}{rcl}
u_+ \otimes_\Aopp u_-  & \coloneqq &  \alpha_\ell^{-1}(u \otimes_\ahha 1),
\\
   u_{[+]} \otimes_\ahha u_{[-]}  & \coloneqq &  \alpha_r^{-1}(1 \otimes_\ahha u),
\end{array}
  \end{equation*}
with summation understood, will be used throughout the entire text.
Recall moreover from Eq.~\eqref{pergolesi} the various triangle notations $\lact, \ract, \blact, \bract$ that denote the four $A$-module structures on the total space $U$ of a bialgebroid, and occasionally even on a $U$-module: sometimes we decorate $U$ or a $U$-module by one of these symbols to indicate the relevant $A$-module structure in a specific situation, {\em e.g.}, in a tensor product.
The symbol $k$ always denotes a commutative ring, usually of characteristic zero.

\section{Categorical preliminaries}
\label{cat}

\addtocontents{toc}{\protect\setcounter{tocdepth}{2}}

In this preliminary section, we gather some notions from category theory such as module categories and centres of bimodule categories that generalise the corresponding ideas from algebra and are at the base of our subsequent considerations. 

\subsection{Bimodule categories and centres}

Let $(\cC, \otimes, \mathbb{1}, \ga, l, r)$ be a monoidal category, where
$
\ga\colon (X \otimes Y) \otimes Z \stackrel{\simeq}{\longrightarrow} X \otimes (Y \otimes Z)
$
is the associativity constraint, and $l$ resp.\ $r$ the left resp.\ right unit constraint.
The following couple of definitions can be found in \cite[\S7.1]{EtiGelNikOst:TC}.

\begin{dfn}
  \label{modcat}
  A {\em left module category} over $\cC$ is a category $\cM$ equipped with a bifunctor $\fren \colon  \cC \times \cM \to \cM$ and natural isomorphisms, again called {\em associativity} and {\em unit constraint},
  \begin{equation}
    \label{assoc1}
  %\begin{eqnarray*}
    %\label{tarrega1}
    \phi_{\scriptscriptstyle X,Y,M} \colon  (X \otimes Y) \fren M \stackrel{\simeq}{\longrightarrow} X \fren (Y \fren M),
 \qqquad
    %\label{tarrega2}
1_{\scriptscriptstyle M} \colon  \mathbb{1} \fren M \stackrel{\simeq}{\longrightarrow} M
  %\end{eqnarray*}
\end{equation}
for all $X, Y \in \cC$ and $M \in \cM$, such that the customary pentagon and triangle diagrams
\begin{small}
\begin{equation}
\label{tarrega3}
\xymatrix{
   & ((X \otimes Y) \otimes Z) \fren M  \ar[ld]_{\ga_{X,Y,Z} \fren \, \id_M \quad \ \ } \ar[rd]^{\ \ \phi_{\scriptscriptstyle X \otimes Y, Z, M}}
  & 
  \\
  (X \otimes (Y \otimes Z)) \fren M \ar[d]_{\phi_{\scriptscriptstyle X, Y \otimes Z, M}}
   & &   (X \otimes Y) \fren (Z \fren M) \ar[d]^{\phi_{\scriptscriptstyle X, Y, Z {\scalebox{0.7}{\fren}} M}}
  \\
\qqquad    X \fren ((Y \otimes Z) \fren M) \ar[rr]^{\id_X \fren \, \phi_{\scriptscriptstyle Y, Z, M}} & &   X \fren (Y \fren (Z \fren M)) \qqquad  
  }
\end{equation}
\end{small}
and
\begin{small}
\begin{equation}
\label{tarrega4}
\xymatrix{
  (X \otimes \mathbb{1}) \fren M \ar[rr]^{\phi_{\scriptscriptstyle X, \mathbb{1}, M}} \ar[rd]_{r_X \fren \, \id_M \quad}
  & & X \fren (\mathbb{1} \fren M) \ar[ld]^{\quad   \id_X \fren \, l_M}
  \\
&  X \fren M &
  }
\end{equation}
\end{small}
commute.
  \end{dfn}

This clearly generalises the idea of a module over a ring.
A {\em right module category} over $\cC$ is defined analogously and is the same as a left $\cC^\op$-module category. In this case, we use the notation
\begin{equation}
\label{assoc2}
\frenop \colon  \cM \times \cC \to \cM, 
\qqquad
\psi_{\scriptscriptstyle M,X,Y} \colon  M \frenop (X \otimes Y) \stackrel{\simeq}{\longrightarrow} (M \frenop X) \frenop Y
\end{equation}
for the bifunctor and the associativity constraint.

\begin{dfn}
  \label{ratagnan}
  A {\em bimodule category} over two monoidal categories $\cC$ and $\cD$ is a category $\cM$ that is simultaneously a left $\cC$-module and right $\cD$-module category with respective associative constraints $\phi$ and $\psi$, plus {\em middle associativity constraints} given by natural transformations
  \begin{equation}
    \label{beethovensneunte}
    \gvt_{\scriptscriptstyle X, M, Z} \colon 
(X \fren M) \frenop Z   
    \stackrel{\simeq}{\lra}
 X \fren (M \frenop Z)    
  \end{equation}
for $M \in \cM$, $X \in \cC$, and $Z \in \cD$, such that the two pentagon diagrams
\begin{small}
\begin{equation}
\label{tarrega5}
\xymatrix@C=.77cm{
   & ((X \otimes Y) \fren M) \frenop Z  \ar[ld]_{\phi_{\scriptscriptstyle X,Y,M} \frenop \, \id_Z \quad \ \ } \ar[rd]^{\ \ \gvt_{\scriptscriptstyle X \otimes Y, M, Z}}
  & 
  \\
  (X \fren (Y \fren M)) \frenop Z \ar[d]_{\gvt_{\scriptscriptstyle X, Y {\scalebox{0.7}{\fren}} M, Z}}
   & &   (X \otimes Y) \fren (M \frenop Z) \ar[d]^{\phi_{\scriptscriptstyle X, Y, M {\raisebox{-0.2pt}{\scalebox{0.7}{\frenop}}} Z}}
  \\
\qqquad    X \fren ((Y \fren M) \frenop Z) \ar[rr]^{\id_X \fren \, \gvt_{\scriptscriptstyle Y, M, Z}} & &   X \fren (Y \fren (M \frenop Z)) \qqquad  
  }
\end{equation}
\end{small}
and
\begin{small}
\begin{equation}
\label{tarrega6}
\xymatrix@C=.77cm{
   & X \fren (M \frenop (W \otimes Z))   \ar[ld]_{\id_X \fren \, \psi_{M,W,Z}  \quad \ \ \ } \ar@{<-}[rd]^{\ \ \ \gvt_{\scriptscriptstyle X, M, W \otimes Z}}
  & 
  \\
  X \fren ((M \frenop W) \frenop Z) \ar@{<-}[d]_{\gvt_{\scriptscriptstyle X, M {\scalebox{0.7}{\frenop}} W, Z}}
   & &   (X \fren M) \frenop (W \otimes Z) \ar[d]^{\psi_{X {\scalebox{0.7}{\fren}} M, W, Z}}
  \\
\qqquad    (X \fren (M \frenop W)) \frenop Z \ar@{<-}[rr]^{\gvt_{\scriptscriptstyle X, M, W} \frenop \, \id_Z} & &   ((X \fren M) \frenop W) \frenop Z \qqquad  
  }
\end{equation}
\end{small}
commute for all
 $M \in \cM$, $X, Y \in \cC$, and $Z, W \in \cD$.
\end{dfn}

\begin{rem}
  Note that whereas several relevant examples of monoidal categories are strict, {\em i.e.}, where the associative constraint $\ga$, along with the left and right unit constraint $l$ resp.\ $r$ are the identity transformations such that the diagrams \eqref{tarrega3} and \eqref{tarrega4} somewhat simplify, this cannot be said for typical examples of (bi)module categories. Here, even for underlying strict monoidal categories, the left, right, and middle associative constraints $\phi$, $\psi$, and $\gvt$ from \eqref{assoc1}, \eqref{assoc2}, and \eqref{beethovensneunte}
are not necessarily an easy guess, see Eqs.~\eqref{bademser} and \eqref{leitz2} for concrete nontrivial examples. This is mainly due to our dealing with (left or right) Hopf algebroids instead of Hopf algebras and therefore the absence of (the notion of) an antipode resp.\ its inverse.
  \end{rem}

The definition of the centre of a bimodule category was formulated
in the context of fusion categories in \cite[Def.~2.1]{GelNaiNik:FCAHT}; we relax it here to monoidal categories which is most likely already present in the literature somewhere.

\begin{dfn}
  \label{wisconsin}
  The {\em centre  of a $(\cC, \cC)$-bimodule category} $\cM$ is a category $\cZ_\cC(\cM)$ the objects of which are given by pairs $(M, \tau)$, where $M$ is an object in $\cM$ and
   $$
  \tau_X \colon 
  X \fren M
  \stackrel{\simeq}{\lra}
  M  \frenop X
  $$
are isomorphisms natural in $X$ such that the hexagon diagram
\begin{small}
\begin{equation}
\label{tarrega7}
\xymatrix{
  & X \fren (M \frenop Z) \ar@{<-}[r]_{\gvt_{\scriptscriptstyle X,M,Z}}
\ar@{<-}[ld]_{\id_X \fren \, \tau_Z \ \ \ }
&  (X \fren M) \frenop Z
\ar[rd]^{\ \ \tau_X \frenop \, \id_Z}
&
  \\
X \fren (Z \fren M)  \ar@{<-}[rd]_{\phi_{\scriptscriptstyle X, Z, M} \quad }
   & &  & (M \frenop X) \frenop Z \ar@{<-}[ld]^{\quad \psi_{M, X, Z}}
  \\
  &    (X \otimes Z) \fren M \ar[r]_{\tau_{X \otimes Z}} &    M \frenop (X \otimes Z) &
  }
\end{equation}
\end{small}
commutes for all $M \in \cM$ and $X, Z \in \cC$.
\end{dfn}

The natural transformation $\tau$ is called a {\em central structure} on $M$.
This definition clearly lifts the idea of the center of a bimodule over a ring to a categorical realm.

%\begin{example}[{\it Biclosed categories as bimodule categories}]
\subsubsection{Biclosed categories as bimodule categories}
\label{dominosteine}
  Clearly,~a~monoidal category is a bimodule category over itself by means of the mon\-oidal product, but this is often not the only possibility and indeed not what we are going to consider in the next sections. If $\cC$ is biclosed, by means of the left and right internal Homs we can define additional right and left $\cC$-actions on $\cC$ itself: 
    \begin{equation}
    \label{verdura}
    Y \fren Z  \coloneqq  \hom^r(Y, Z),
    \qquad
Z \frenop Y  \coloneqq  \hom^\ell(Y,Z)
    \end{equation}
for objects $Y, Z \in \cC$. This way, the adjunctions read as  
adjunctions 
  \begin{equation}
    \label{fueller}
  \begin{array}{rcccl}
 \xi\colon   \Hom_\cC(X \otimes Y, Z)
   \!\!\! &\xrightarrow{\ \simeq \ } \!\!\! &
  \Hom_\cC(X, \hom^r(Y,Z))
   \!\!\! &= \!\!\!&
    \Hom_\cC(X, Y \fren Z),
\\
    \zeta\colon    \Hom_\cC(X \otimes Y, Z)
\!\!\!
&\xrightarrow{\ \simeq \ } \!\!\! &
  \Hom_\cC(Y, \hom^\ell(X,Z))
    \!\!\! &= \!\!\!&
    \Hom_\cC(Y, Z \frenop X),
  \end{array}
  \end{equation}
  flipping a right action into a left resp.\ a left into a right one.
%\end{example}
%
If we, to respect (covariant) functoriality, as usual see the left and right internal Homs as maps $\cC^\op \times \cC \to \cC$, where $\cC^\op$ denotes the category opposite to $\cC$, we can sum up the above example in the following (probably) well-known lemma:
\begin{lem}
\label{glenngould}
A biclosed monoidal category $\cC$ is a bimodule category over $\cC^\op$ by means of the (adjoint) right and left actions in \eqref{verdura}. 
\end{lem}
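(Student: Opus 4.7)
The plan is to extract all of the bimodule structure from the defining adjunctions \eqref{fueller} together with the Yoneda lemma, so that every coherence axiom for the bimodule category reduces to a coherence axiom of $\cC$ itself. First I would fix conventions: view the internal Homs as bifunctors $\hom^r,\hom^\ell \colon \cC^\op \times \cC \to \cC$, and recall that $\cC^\op$ carries a monoidal structure with the same tensor product and unit but with inverted associator and unit constraints. The assignments in \eqref{verdura} then yield honest bifunctors $\fren \colon \cC^\op \times \cC \to \cC$ and $\frenop \colon \cC \times \cC^\op \to \cC$ providing the candidate left and right actions of $\cC^\op$ on $\cC$.

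Next, the associativity and unit constraints would be constructed by chasing through the adjunctions. The left associator $\phi$, the right associator $\psi$, and the middle associator $\gvt$ would each arise via the Yoneda lemma from naturality chains such as
\[
\Hom_\cC\!\bigl(X,(Y_1 \otimes Y_2)\fren Z\bigr) \simeq \Hom_\cC\!\bigl(X \otimes (Y_1 \otimes Y_2), Z\bigr) \simeq \Hom_\cC\!\bigl((X \otimes Y_1) \otimes Y_2, Z\bigr) \simeq \Hom_\cC\!\bigl(X, Y_1 \fren (Y_2 \fren Z)\bigr),
\]
and analogously for $\psi$ and $\gvt$, the last one being obtained by running through the two adjunctions of \eqref{fueller} simultaneously so as to move past both a left- and a right-internal Hom. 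The unit constraints would then come from the canonical isomorphisms $\hom^r(\mathbb{1},Z) \simeq Z \simeq \hom^\ell(\mathbb{1},Z)$ implied by the left/right unit constraints of $\cC$.

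Finally, coherence would be verified by the same device. Applying $\Hom_\cC(X,-)$ to each pentagon diagram \eqref{tarrega3}, \eqref{tarrega5}, \eqref{tarrega6}, and to the triangle \eqref{tarrega4}, and then running the adjunctions through, turns them into diagrams in $\cC$ built from fourfold (respectively threefold) tensor products; each such diagram is literally the pentagon or triangle axiom of $\cC$ applied to the relevant configuration of objects. By Yoneda, the original diagrams then commute. The only delicate point, where I would be most careful, is the bookkeeping of variance and the fact that $\cC^\op$ uses inverse structure maps, so that the diagrams extracted from \eqref{tarrega3}--\eqref{tarrega6} align with the monoidal coherences of $\cC$ in the correct orientation rather than their formal inverses.
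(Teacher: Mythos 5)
Your proposal is correct and follows essentially the same route as the paper: the associativity, unit, and middle constraints are extracted from the adjunctions \eqref{fueller} via the Yoneda Lemma, and the coherence diagrams \eqref{tarrega5}, \eqref{tarrega6} are then reduced (again by Yoneda) to the pentagon and triangle axioms of $\cC$ itself, which is precisely the "straightforward" verification the paper leaves implicit. Your explicit attention to the variance bookkeeping for $\cC^\op$ is a reasonable refinement of the same argument rather than a departure from it.
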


\begin{proof}
  %Defining the respective right and left actions as in \eqref{verdura},
  By means of the adjunctions \eqref{fueller} and the associativity constraint of the monoidal category $\cC$, we have for any object $W \in \cC$:
% \begin{small}
  \begin{equation*}
    \begin{array}{lllll}
  &&           \Hom_\cC\big(W, \hom^\ell(Z, \hom^r(Y, M))\big)
\\
      &\simeq&
               \Hom_\cC\big(Z \otimes W, \hom^r(Y, M)\big)
\\
               & \simeq &
     \Hom_\cC\big((Z \otimes W) \otimes Y, M\big)
\\
  &   \simeq &
     \Hom_\cC\big(Z \otimes (W \otimes Y), M\big)
\\
     & \simeq &
     \Hom_\cC\big(W \otimes Y, \hom^\ell(Z,M)\big)
     \\
    & \simeq &
     \Hom_\cC\big(W, \hom^r(Y, \hom^\ell(Z,M))\big)
       \end{array}
  \end{equation*}
 % \end{small}
  for any objects $M, Y, Z \in \cC$, which by the Yoneda Lemma implies
$$
\hom^\ell(Z, \hom^r(Y, M))
\simeq
\hom^r(Y, \hom^\ell(Z,M)).
  $$
This yields the middle associativity \eqref{beethovensneunte} with respect to the actions \eqref{verdura}.
Likewise, one obtains the
left resp.\ right associativity constraints
$
\hom^r(X \otimes Y, M)
\simeq
\hom^r(X, \hom^r(Y,M))
$
resp.\
$
\hom^\ell(X \otimes Y, M)
\simeq
\hom^\ell(Y, \hom^\ell(X,M)),
$
along with the left and right unit constraints $\hom^r(\mathbb{1}, M) \simeq M \simeq \hom^\ell(\mathbb{1}, M)$.
    It is then straightforward to 
    verify Diagrams \eqref{tarrega5} \& \eqref{tarrega6} to complete the proof.
  \end{proof}
  
The centre of $\cC$ with respect to this $\cC^\op$-bimodule structure
%as in Lemma \ref{glenngould}
will be denoted by $\cZ_{\cC^\op}(\cC)$.
Following
  \cite[Eq.~(2.11)]{Sha:OTAYDMCC},
  and similar to \cite[Def.~2.3 \& Lem.~2.4]{KobSha:ACATCCOQHAAHA}, we denote by
  $
\cZ'_{\cC^\op}(\cC)
  $
its full subcategory consisting of objects $M$ such that
the identity morphism $\id_\emme \in \Hom_\cC(M, M)$ is mapped to itself
via the chain of isomorphisms
\begin{small}
\begin{equation}
  \label{subcategory}
\begin{array}{lcccr}
  \Hom_\cC(M, M)
\!\!\!\!\!&\simeq\!\!\!\!\!&
  \\
\Hom_\cC(\mathbb{1} \otimes M, M)
\!\!\!\!\!&\simeq\!\!\!\!\!&
\Hom_\cC(\mathbb{1}, M \fren M ) \simeq
\Hom_\cC(\mathbb{1}, M \frenop M )
\!\!\!\!\!&\simeq\!\!\!\!\!&
\Hom_\cC(M \otimes \mathbb{1}, M)
\\
&&
  \!\!\!\!\!&\simeq\!\!\!\!\!&
\Hom_\cC(M, M),
\end{array}
\end{equation}
\end{small}
\!\!\!\!
induced by the left and right unit constraints, the adjunctions \eqref{fueller}, and the central structure on $M$.
%(and suppressing the left and right unit constraints).

\begin{dfn}
\label{stablepable}
  The full subcategory  $\cZ'_{\cC^\op}(\cC)$ of the bimodule category centre $\cZ_{\cC^\op}(\cC)$ will be called the {\em stable centre} of $\cC$ and its objects {\em stable}.
\end{dfn}

As we will see later on,
from a different perspective 
this subcategory will distinguish cyclic objects from para-cyclic ones.

\subsection{Trace functors}
\label{t'n'c}
We will need one more piece of categorical~machinery:
so-called
trace functors,
introduced by Kaledin \cite[Def.~2.1]{Kal:TTAL}
in an approach to cyclic hom\-ology with coefficients and towards a possible understanding of cyclic homology as a derived functor \cite{Kal:CHWC}. We slightly weaken the original notion here:

\begin{dfn}
  \label{kaledin}
A {\em weak trace functor} consists of a functor $T \colon  \cC \to \cE$ between a monoidal category $(\cC, \otimes, \mathbb{1}, \ga, l, r)$ and a category $\cE$, together with a family of isomorphisms
$$
\tr_{\scriptscriptstyle X,Y} \colon  T(X \otimes Y)
  \stackrel{\simeq}{\lra}
T(Y \otimes X)
$$ 
functorial in all $X, Y \in \cC$ and such that
\begin{equation}
  \label{trace0}
\tr_{\scriptscriptstyle Z, X \otimes Y}
=
   T(\ga^{-1}_{\scriptscriptstyle X, Y, Z})
\circ
\tr_{\scriptscriptstyle Y \otimes Z, X}
\circ
   T(\ga^{-1}_{\scriptscriptstyle Y, Z, X})
   \circ
   \tr_{\scriptscriptstyle Z \otimes X, Y}
   \circ   T(\ga^{-1}_{\scriptscriptstyle Z, X, Y})
\end{equation}
for all $X, Y, Z \in \cC$.
A weak trace functor is called {\em unital} if
\begin{equation}
  \label{unital}
  T(r_{\scriptscriptstyle X})
  \circ \tr_{\scriptscriptstyle \mathbb{1}, X}
  = T(l_{\scriptscriptstyle X})
  \end{equation}
for all $X \in \cC$, in case of which we simply speak of a {\em trace functor}.
\end{dfn}

\begin{rem}
  \label{kaledin1}
  If the trace functor is unital, 
using \eqref{unital} 
in \eqref{trace0} in case $Z = \mathbb{1}$ and the naturality of $\tr$ for the left and right unit constraint, \eqref{trace0} reduces to
  $$
\tr_{\scriptscriptstyle Y, X} \circ \tr_{\scriptscriptstyle X, Y} = \id. 
  $$
In this case, abbreviating
$
\tr_{\scriptscriptstyle X, Y, Z}  \coloneqq 
\tr_{\scriptscriptstyle X,  Y \otimes Z}
   \circ   T(\ga_{\scriptscriptstyle X, Y, Z}),
   $
   Eq.~\eqref{trace0} can be more compactly rewritten as
\begin{equation}
  \label{trace}
\tr_{\scriptscriptstyle Z, X, Y}
\circ
\tr_{\scriptscriptstyle Y, Z, X}
  \circ \tr_{\scriptscriptstyle X, Y, Z} = \id
\end{equation}
for all $X, Y, Z \in \cC$, which is the form in which it appears in
\cite[Def.~2.1]{Kal:TTAL}.
\end{rem}

The distinction between the unital and nonunital (or weak)  case finds its motivation in the next subsection.

%\begin{rem}
%  The strictness of $\cC$ is included here only for convenience. If $\cC$ is not assumed to be strict, one can formulate a corresponding definition in which the unitality condition for $\tr$ turns into a triangle axiom, whereas
%the condition \eqref{trace} into a hexagon axiom. 
%  \end{rem}

%\begin{example}[{\it Trace functors from biclosed categories}]
\subsubsection{Trace functors from biclosed categories}
\label{federtasche}
  In the setting we are going to deal with,
  typical examples of trace functors of interest for us turn out to be closely related to bimodule category centres and internal Homs, that is,
  in continuation of \S\ref{dominosteine}, 
  they arise via the adjunctions \eqref{fueller} in connection with $\cZ_{\cC^\op}(\cC)$ (and $\cZ'_{\cC^\op}(\cC)$, respectively):

  \begin{lem}
    \label{funkyfunk}
    If $\cC$ is a biclosed monoidal category,
    %and $\cZ_{\cC^\op}(\cC)$ its bimodule centre with respect to the adjoint actions as in \eqref{verdura},
then
$T = \Hom_\cC(-, M)$ for any $M \in    \cZ_{\cC^\op}(\cC)$
defines a weak trace functor, which is unital if $M \in \cZ'_{\cC^\op}(\cC)$. Vice versa,  
    a weak trace functor of the form  $T = \Hom_\cC(-, M)$ induces a central structure $\tau$ on an object $M \in \cC$, and if $T$ is unital, a central structure with respect to which $M$ is stable.
    \end{lem}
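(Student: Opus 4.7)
The plan is to exhibit an explicit pair of mutually inverse constructions between central structures on $M$ and weak trace structures on $\Hom_\cC(-, M)$, and then to match the defining axioms on the two sides. For the forward direction, suppose $M \in \cZ_{\cC^\op}(\cC)$ carries a central structure which, after the identifications \eqref{verdura}, reads $\tau_X \colon \hom^r(X, M) \to \hom^\ell(X, M)$. I would simply set
$$ \tr_{X, Y} \coloneqq \zeta^{-1} \circ (\tau_Y \circ -) \circ \xi, $$
with $\xi, \zeta$ the adjunctions of \eqref{fueller}. Naturality in $X$ and $Y$ is automatic, since each of the three constituents is natural.

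The main technical step is the verification of the trace identity \eqref{trace0}. The strategy is to translate both sides back to the internal-Hom picture via $\xi$ and $\zeta$ and to recognise, using Lemma \ref{glenngould} together with the pentagons \eqref{tarrega5} and \eqref{tarrega6} (which express how the middle associativity $\gvt$ is built from the monoidal associativity $\ga$), that the resulting equation is precisely the hexagon \eqref{tarrega7} for $\tau$, iterated three times around a cyclically permuted triple and reassembled via $\gvt$. For the unitality claim, I would unpack \eqref{unital} on morphisms $f \colon \mathbb{1} \otimes X \to M$: after passing through the two adjunctions and using the unit constraints in the triangle \eqref{tarrega4}, the identity amounts exactly to the chain of isomorphisms in \eqref{subcategory} being the identity on $\Hom_\cC(M, M)$, i.e.\ to $M \in \cZ'_{\cC^\op}(\cC)$.

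For the converse, given a weak trace functor of the form $T = \Hom_\cC(-, M)$, the composite $\zeta \circ \tr_{X, Y} \circ \xi^{-1} \colon \Hom_\cC(X, Y \fren M) \to \Hom_\cC(X, M \frenop Y)$ is natural in $X$, so by the Yoneda lemma it is induced by a unique morphism $\tau_Y \colon Y \fren M \to M \frenop Y$, which is an isomorphism because $\tr_{X,Y}$ is. Reversing the bookkeeping outlined above turns \eqref{trace0} into the hexagon \eqref{tarrega7} and, when $T$ is unital, turns \eqref{unital} into the stability condition for $M$. The main obstacle is not conceptual but clerical: one has to push many associativity and unit constraints through the adjunctions $\xi, \zeta$ and keep careful track of the interplay between $\phi, \psi, \gvt$ on the bimodule-category side and $\ga, l, r$ on the monoidal side, where any sign or direction slip would show up immediately.
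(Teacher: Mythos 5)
Your proposal follows the paper's proof essentially verbatim: the same formula $\tr_{X,Y}=\zeta^{-1}\circ\Hom_\cC(X,\tau_Y)\circ\xi$, the same translation of \eqref{trace0} into the single hexagon \eqref{tarrega7} through the adjunctions (the paper organises this as a commuting ``honeycomb'' of naturality squares, rather than an iteration of the hexagon), and the same Yoneda argument to recover $\tau$ from $\tr$ in the converse direction. The only point you assert without justification is that stability of $M$ --- a condition on $\id_\emme$ alone --- yields unitality \eqref{unital} at \emph{every} object $X$; the paper closes this by applying the Yoneda Lemma to the natural endomorphism $\eta=T(r)\circ\tr_{\scriptscriptstyle \mathbb{1},-}\circ T(l^{-1})$ of the representable functor $T=\Hom_\cC(-,M)$, which is determined by its value on $\id_\emme$.
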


  \begin{proof}
    For a biclosed monoidal category, with actions defined as in \eqref{verdura}, set
    \begin{equation}
      \label{cupper}
\tr_{\scriptscriptstyle{X,Y}} = \zeta^{-1} \circ \Hom_\cC(X, \tau_{\scriptscriptstyle Y}) \circ \xi,
      \end{equation}
    where $\zeta, \xi$ denote the adjunctions \eqref{fueller}, that is to say, we define $\tr$ by the following diagram:

    \vspace*{-.3cm}
    \begin{center}
 \begin{footnotesize}
    \begin{tikzcd}[column sep={18em,between origins}]
 \Hom_\cC(X \otimes Y, Z)
 \arrow[r, "\tr_{X,Y}"]
 \arrow[d, swap, outer sep=2pt, "\xi"]
 &
 \Hom_\cC(Y \otimes X, Z) 
  \\
  \Hom_\cC(X, \hom^r(Y,Z))
  \arrow[r, outer sep=1pt, "{\Hom_\cC(X, \tau_Y)}"]
&
  \Hom_\cC(Y, \hom^\ell(X,Z))
\arrow[u, swap, outer sep=2pt, "\zeta^{-1}"]
    \end{tikzcd}
\end{footnotesize}
    \end{center}
    \vspace*{-.1cm}
    for all $X, Y, Z \in \cC$.
 It is then a direct check that the following honeycomb of isomorphisms commutes:

 \smallskip
 \hspace*{-1cm}{
\begin{footnotesize}
 \begin{tikzcd}[column sep={7.5em,between origins},row sep=3.0em]
  & &
  \Hom_\cC(Y \otimes (Z \otimes X), M) 
  \arrow[rr, leftarrow]
  \ar[ld, rightarrow]
  \ar[ld, leftarrow, %start anchor={[xshift=-0.5ex]},
    end anchor={[xshift=-1.9ex]north east}]
        & & \Hom_\cC((Y \otimes Z) \otimes X, M) 
  \ar[ld, rightarrow]
  \ar[rdd, rightarrow, "\tr_{Y \otimes Z, X}"]
    &
    \\
  & 
    \Hom_\cC(Z, X \fren (M \frenop Y))
      \ar[d, start anchor={[xshift=-4.95ex]south east}, end anchor={[xshift=-0.87ex]}, dash, "\tr_{Z \otimes X, Y}"]
& &
\Hom_\cC(Z, (X \fren M) \frenop Y)
\ar[from=ll, crossing over, leftarrow]%_{\gvt_{\scriptscriptstyle X,M,Y}}
 \ar[rdd, rightarrow, swap, "\tau_X \frenop \id_Y"]
& &
  \\
  &
  \Hom_\cC((Z \otimes X) \otimes Y, M) 
  \ar[ld, rightarrow]
%  \ar[ruu, dash]
  & & & &
  \Hom_\cC(X \otimes (Y \otimes Z), M)
  \ar[ld, rightarrow]
        \ar[ld, leftarrow, start anchor={[xshift=0ex]}, end anchor={[xshift=-1.9ex]north east}]
  \\
\Hom_\cC(Z,  X \fren (Y \fren M))
\ar[rdd, leftarrow]%_{\phi_{\scriptscriptstyle X, Y, M} \quad }
  \ar[ruu, crossing over, rightarrow, near start, "\id_X \fren \tau_Y"]
& &  & & 
\Hom_\cC(Z,  (M \frenop X) \frenop Y)
      \ar[d, dash, start anchor={[xshift=-4.97ex]south east}, end anchor={[xshift=-0.87ex]}]
&
  \\
  & &
  \Hom_\cC(Z \otimes (X \otimes Y), M) 
\ar[luu, leftarrow]
\ar[ld, rightarrow]
 \ar[rr, rightarrow, "\tr_{Z, X \otimes  Y}"]
 & &
  \Hom_\cC((X \otimes Y) \otimes Z, M) 
% \ar[uur, dash]
 \ar[ld, rightarrow]
    & 
 \\
 &
 \Hom_\cC(Z, (X \otimes Y) \fren M)
 \ar[rr, rightarrow, "\tau_{X \otimes Y}"]%_{\tau_{X \otimes Y}}
  & &
\Hom_\cC(Z,  M \frenop (X \otimes Y))
 \ar[from=ruu, crossing over, leftarrow]%^{\quad \psi_{M, X, Y}}
  & &
\end{tikzcd}
\end{footnotesize}  
 }
% \smallskip
 
 \noindent where for better readability we did not label the arrows coming from all kinds of associators, by abuse of notation wrote $\tau_X$ etc.\ for  postcompositions of the type $\Hom_\cC(Z, \tau_X)$, and where the arrows pointing out of the depths are those coming from the adjunctions \eqref{fueller}, applied once or twice. In particular, this proves that if (and only if) the front hexagon in the above honeycomb commutes, {\em i.e.}, if $M \in    \cZ_{\cC^\op}(\cC)$ such that \eqref{tarrega7} commutes, then the back hexagon commutes as well, which amounts to the property \eqref{trace0}, and hence $T = \Hom_\cC(-, M)$ is a weak trace functor.

 Now consider the natural transformation $\eta\colon T \to T$ given by
$
\eta  \coloneqq  T(r) \circ \tr_{\scriptscriptstyle \mathbb{1}, -} \circ \, T(l^{-1}).
$
If $M$ is stable, by \eqref{subcategory} one has
$
\eta_\emme = \id.
$
Since the Yoneda Lemma implies that the natural transformation $\eta$ is uniquely determined by its values on $M$, we obtain
$
\eta_\ikks = \id
$
for any object $X \in \cC$ as well, and therefore
$
T(r_\ikks) \circ \tr_{\scriptscriptstyle X, \mathbb{1}} = T(l_\ikks),
$
that is, the weak trace functor is unital and what was said in Remark \ref{kaledin1} applies.

Vice versa, if $T = \Hom_\cC(-, M)$ is a weak trace functor with a family of isomorphisms $\tr$ such that \eqref{trace0} holds, then reading \eqref{cupper} from right to left, the honeycomb implies the hexagon property \eqref{tarrega7} for the central structure by a Yoneda Lemma argument again. If $T$ is unital, then \eqref{subcategory} is automatic and $M$ lies in $\cZ'_{\cC^\op}(\cC)$, which is to say, $M$ is stable. 
    \end{proof}

%\end{example}

 The study of trace functors of the form $T = \Hom_\cC(-, M)$, where
  $
M \in \cZ_{\cC^\op}(\cC),
  $
will be the content of the next sections for concrete biclosed monoidal categories.
In \S\ref{shine}, all this will lead to how to explicitly re-obtain 
the cyclic operator on the cochain complex computing certain $\Ext$ groups from such a trace functor.

%%%%%%%%%%%%%%%%%%%%%%%%%%%%%%%%%%%%%%%%%%%%%%%%%%%%%%%%%%%%%%%%%%%%%%%%%%%%

\section{Centres and anti Yetter-Drinfel'd contramodules}
\label{heizungs}

According to Lemmata \ref{glenngould} \& \ref{funkyfunk},
the main idea in what follows is to define (or find) the internal Homs of a biclosed monoidal category of our interest, which then allows for a left and a right adjoint action, a corresponding bimodule category and finally its centre inducing a trace functor, as dealt with generally in the previous section.

\subsection{Left and right closedness of $\umod$}
\label{threeone}
Let $(U, A)$ be a left bialgebroid (see \S\ref{bialgebroids1}).
As in the bialgebra case, 
the monoidal structure on the (strict) monoidal category $\umod$ of left $U$-modules corresponds to the diagonal $U$-action on the tensor product $N \otimes_\ahha M$ of two left $U$-modules $N, M$:
\begin{equation}
  \label{mancino}
u \mancino (n \otimes_\ahha m)  \coloneqq 
\gD(u)(n \otimes_\ahha m) =
u_{(1)} n \otimes_\ahha u_{(2)} m
\end{equation}
for $n \in N$, $m \in M$, and $u \in U$.

With respect to the obvious forgetful functor $\umod \to \amoda$, we sometimes denote the induced $A$-bimodule structure on a left $U$-module $M$ by
\begin{equation}
  \label{peinture}
  a \lact m \ract b  \coloneqq  s(a)t(b)m, \qquad \forall \ m \in M, \ a, b \in A.
  \end{equation}
%  for $m \in M$, $a, b \in A$.

\begin{lem}
\label{keineDokumente}
Let $(U, A)$ be a left bialgebroid.
\begin{enumerate}
  \compactlist{99}
\item[({\it i})]
  The category $\umod$ of left $U$-modules is left closed monoidal,
  that is, has left internal Hom functors:
  $$
\hom^\ell(N, M)  \coloneqq  \Hom_\uhhu(N \otimes_\ahha \due U \lact {}, M), %\qquad \forall \ N, M \in \umod
$$
for all $N, M \in \umod$,
equipped with the left $U$-action
\begin{equation}
  \label{umact}
(v \umact f)(n \otimes_\ahha u)  \coloneqq  f(n \otimes_\ahha uv)
  \end{equation}
for every $u, v \in U$ and $n \in N$.
\item[({\it ii})]
  If the left bialgebroid is left Hopf (see \S\ref{bialgebroids2}), the category $\umod$ is right closed monoidal with right internal Hom functors of the form:
  \begin{equation}
    \label{ravel0}
\hom^r(N, M)  \coloneqq  \Hom_\Aopp(N,M)
  \end{equation}
for all $N, M \in \umod$,
equipped with the left $U$-action
  \begin{equation}
  \label{pmact}
  (u \pmact g)(n)  \coloneqq  u_+ g(u_- n)
  \end{equation}
  for every $u \in U$ and $n \in N$.
\item[({\it iii})]
Consequently, for a left Hopf algebroid $(U,A)$ over an underlying left bialgebroid, the category $\umod$ is biclosed monoidal, that is, has both left and right internal Hom functors.
\end{enumerate}
\end{lem}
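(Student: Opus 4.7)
My plan is to handle the three parts in order, with the bulk of the work being the two Hom--tensor adjunctions.

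For part (i), which requires no Hopf hypothesis, I would first observe that the diagonal action $u \mancino (n \otimes_\ahha v) := u_{(1)} n \otimes_\ahha u_{(2)} v$ on $N \otimes_\ahha \due U \lact{}$ is well-defined (by the Takeuchi property of $\gD$) and manifestly commutes with right multiplication on the $U$-factor; hence the prescription $(v \umact f)(n \otimes_\ahha u) := f(n \otimes_\ahha uv)$ yields a well-defined left $U$-action on $\Hom_\uhhu(N \otimes_\ahha \due U \lact{}, M)$, with each $v \umact f$ still $U$-linear for the diagonal action. The desired adjunction $\Hom_\uhhu(N \otimes_\ahha X, M) \simeq \Hom_\uhhu(X, \hom^\ell(N, M))$ is then exhibited by the mutually inverse bijections
\[
\Phi(f)(x)(n \otimes_\ahha u) := f(n \otimes_\ahha u x), \qquad \Phi^{-1}(g)(n \otimes_\ahha x) := g(x)(n \otimes_\ahha 1),
\]
whose verifications reduce to the $\lact$--$\ract$ balancing of the monoidal tensor for well-definedness, the associativity $(wu) x = w(ux)$ of the $U$-action on $X$ for $U$-linearity of $\Phi(f)(x)$, the very definition of $\umact$ for $U$-linearity of $\Phi(f)$, and unitality $1 \cdot x = x$ together with $U$-linearity of $g(x)$ for the inverse identities.

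For part (ii), the left Hopf hypothesis provides the translation map $u \mapsto u_+ \otimes_\Aopp u_-$ with its standard identities, most notably
\[
u_{+(1)} \otimes_\ahha u_{+(2)} u_- = u \otimes_\ahha 1, \quad (vu)_+ \otimes_\Aopp (vu)_- = v_+ u_+ \otimes_\Aopp u_- v_-, \quad t(a)_+ \otimes_\Aopp t(a)_- = 1 \otimes_\Aopp s(a).
\]
The second of these gives the associativity of $\pmact$, and the compatibilities of the translation map with source and target imply that $\pmact$ preserves the subspace of right $A$-linear maps, so that $\hom^r(N, M) = \Hom_\Aopp(N, M)$ with the $\pmact$ action is a genuine left $U$-module. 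The adjunction $\Hom_\uhhu(X \otimes_\ahha Y, M) \simeq \Hom_\uhhu(X, \hom^r(Y, M))$ is then realised by $\Psi(f)(x)(y) := f(x \otimes_\ahha y)$ with inverse $\Psi^{-1}(h)(x \otimes_\ahha y) := h(x)(y)$; the key $U$-linearity check is the identity $f(u x \otimes_\ahha y) = u_+ f(x \otimes_\ahha u_- y)$, which by $U$-linearity of $f$ for the diagonal action rewrites as $u_+ f(x \otimes_\ahha u_- y) = f(u_{+(1)} x \otimes_\ahha u_{+(2)} u_- y)$ and collapses by the first translation identity to $f(u x \otimes_\ahha y)$; well-definedness of $\Psi^{-1}$ on the tensor over $A$ uses the third.

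Part (iii) is then an immediate combination of (i) and (ii). The main obstacle I anticipate is not any single conceptual step but the careful bookkeeping of the four $A$-actions $\lact, \ract, \blact, \bract$ on $U$ and their propagation through tensor products over $A$ and $\Aop$ as well as through the translation map: in the absence of an antipode and over a non-commutative base $A$, each verification carries a few more lines than its Hopf-algebra counterpart, but none is genuinely hard.
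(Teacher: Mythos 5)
Your proposal is correct and takes essentially the same approach as the paper: the paper's own proof consists of citing Schauenburg for the routine verifications and recording the two adjunction bijections \eqref{ad1} and \eqref{ad2}, which are literally your $\Phi$ and $\Psi$, and the identities you single out --- \eqref{Sch2} for the exchange $f(ux\otimes_\ahha y)=u_+f(x\otimes_\ahha u_-y)$, \eqref{Sch6} for the associativity of $\pmact$, and \eqref{Sch9} for balancedness over $\otimes_\ahha$ --- are exactly the ones needed. One microscopic slip: the identity $\Phi(\Phi^{-1}(g))=g$ in part (i) uses the $U$-linearity of $g$ itself as a morphism $X\to\hom^\ell(N,M)$ (so that $g(ux)=u\umact g(x)$), not the $U$-linearity of the individual maps $g(x)$.
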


\begin{proof}
  This is a well-known result and has been originally proven in
\cite[\S3]{Schau:DADOQGHA}, see also \cite[Lem.~4.16]{Kow:WEIABVA} for the conventions used in the setting at hand. For later use, we give the adjunction morphisms. As for part (i), this would be
\begin{small}
\begin{equation}
\label{ad1}
\begin{array}{rcl}
\zeta \colon  \Hom_\uhhu(N \otimes_\ahha P, M) &\to&
%\Hom_\uhhu(P, \Hom_\uhhu(N \otimes_\ahha U, M)) =
\Hom_\uhhu(P, \hom^\ell(N, M)),
\\[2pt]
f &\mapsto& \big\{p \mapsto \{n \otimes_\ahha u \mapsto f(n \otimes_\ahha up)\}\big\},
\\[2pt]
\big\{\tilde f(p)(n \otimes_\ahha 1) \mapsfrom n \otimes_\ahha p \big\} &\mapsfrom& \tilde f,
\end{array}
\end{equation}
\end{small}
and in part (ii), the claimed adjunction 
\begin{equation}
\label{ad2}
\begin{array}{rcl}
\xi \colon  \Hom_\uhhu(P \otimes_\ahha N, M)
&\to& \Hom_\uhhu(P, \hom^r(N, M)),
\\[2pt]
g &\mapsto& \{p \mapsto g(p \otimes_\ahha -) \}
\end{array}
\end{equation}
is simply the Hom-tensor adjunction.
\end{proof}

\begin{notation}
As the left and right internal Homs we use are quite different in nature and it sometimes  turns out to be necessary to remember the explicit $U$- or $A$-linearity in question, we shall not always use the sort of concealing notation $\hom^r$ and $\hom^\ell$ but often write $\Hom_\Aopp$ and $\Hom_\uhhu( - \otimes_\ahha U, -)$ instead, even if the internal Homs with their $U$-module structure are meant.
  \end{notation}

\begin{rem}
  \label{hours}
  The preceding lemma precisely establishes the setting adapted to our needs; nevertheless, even without any left Hopf structure, symmetrically to the case of the left internal Homs, the category $\umod$ over a left bialgebroid has right internal Homs as well (see \cite[Prop.~3.3]{Schau:DADOQGHA}). Put
  %by setting
  \begin{equation}
    \label{ravel}
\hom^r(N,M)  \coloneqq  \Hom_\uhhu(U_\ract \otimes_\ahha N, M), 
 \end{equation}
being a left $U$-module by right multiplication on $U$ in the argument.
 The original definition of a left Hopf algebroid \cite[Thm.~3.5]{Schau:DADOQGHA} then states that a left bialgebroid $(U,A)$ is called left Hopf if the forgetful functor $\umod \to \amoda$ preserves internal Homs (which in {\em loc.~cit.}~is shown to be equivalent to the definition of left Hopf algebroids mentioned below Eq.~\eqref{nochmehrRegen}). In this case, its right internal Homs \eqref{ravel} are isomorphic (as $U$-modules) to the ones given in \eqref{ravel0}, with isomorphism given by
 $$
 \Hom_\Aopp(N,M) \to \Hom_\uhhu(U_\ract \otimes_\ahha N, M), \quad g \mapsto \sma\cdot  \pmact g,
 $$
 and inverse $f \mapsto f(1 \otimes_\ahha -)$.
 % \end{rem}
%
%\begin{rem}
%\label{hours0}
On the contrary, the left internal Homs can be simplified (or complicated, depending on the point of view) in case more (or rather a different) structure is present. More precisely, in case the left bialgebroid $(U,A)$ in addition is right Hopf, one can set $\hom^\ell(N,M)  \coloneqq  \Hom_\ahha(N,M)$ with left $U$-module structure given by
\begin{equation}
    \label{mpaction}
(u \mpact g)(n)  \coloneqq  u_{[+]} g(u_{[-]} n), \qquad g \in \Hom_\ahha(N,M), \ n \in N,
\end{equation}
and the same comments apply as above.
%, this could serve as a definition of right Hopf algebroids.
In the Hopf algebra case, the condition of being right Hopf corresponds to the antipode being invertible, see Eq.~\eqref{sesam}. We are, however, more interested in the more general approach in Lemma \ref{keineDokumente} with only one Hopf structure present, {\em i.e.}, the left one.
  \end{rem}

\subsection{$\umod$ as a bimodule category}
\label{arpino}
The internal Homs exhibited in the previous section that turn $\umod$ into a biclosed category 
allow to define the structure of a bimodule category on it
in the sense discussed in \S\ref{dominosteine}. More precisely, 
Lemmata \ref{glenngould} \& \ref{keineDokumente} directly imply:

\begin{cor}
  \label{wasasesam1}
  Let $(U,A)$ be a left bialgebroid.
%  \begin{enumerate}
%    \compactlist{99}
%  \item
Then the operation
   \begin{equation*}
%   \label{methamill8}
%    \begin{array}{rclrcl}
      \umod \times \umod^\op \to \umod,
      %      \\
      \quad
      (M,N) \mapsto
      M
\frenop
      N  \coloneqq 
      \hom^\ell(N, M)
% \end{array}
   \end{equation*}
   defines on $\umod$ the structure of a right module category over $\umod^\op$.
%\item
If $(U, A)$ is in addition  left Hopf, the operation
  \begin{equation}
    \label{methamill7}
%    \begin{array}{rcl}
      \umod^\op \times \umod \to \umod,
%      \\
      (N,M) \mapsto
      N
\fren
      M  \coloneqq 
      \hom^r(N, M)
% \end{array}
    \end{equation}
  defines on $\umod$ the structure of a left module category over $\umod^\op$.
  %  \item
  Hence, if the left bialgebroid $(U,A)$ is in addition  left Hopf, then $\umod$ is a bimodule category over the monoidal category $\umod^\op$.
%\end{enumerate}
\end{cor}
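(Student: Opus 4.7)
The plan is to observe that all three claims are direct instances of Lemma \ref{glenngould} applied to the biclosedness of $\umod$ already established in Lemma \ref{keineDokumente}, splitting the task into the two halves of the bimodule structure (which have different hypotheses) plus the combination step.

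First I would treat the right $\umod^\op$-module structure via $M \frenop N := \hom^\ell(N, M)$. Only the left closedness of Lemma \ref{keineDokumente}(\textit{i}) enters, which holds for any left bialgebroid without the Hopf hypothesis. Iterating the Hom-tensor adjunction \eqref{ad1} produces natural isomorphisms
$$
\Hom_\uhhu\big(X, \hom^\ell(N' \otimes_\ahha N, M)\big) \simeq \Hom_\uhhu\big((N' \otimes_\ahha N) \otimes_\ahha X, M\big) \simeq \Hom_\uhhu\big(X, \hom^\ell(N, \hom^\ell(N', M))\big),
$$
so that the Yoneda Lemma yields the right associativity $M \frenop (N' \otimes_\ahha N) \simeq (M \frenop N') \frenop N$. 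The unit constraint reduces to $\hom^\ell(A, M) \simeq M$, and the pentagon \eqref{tarrega3} and triangle \eqref{tarrega4} then follow from one further Yoneda check at the level of representable functors. For the left $\umod^\op$-module structure via $N \fren M := \hom^r(N, M)$, the left Hopf hypothesis is needed to invoke Lemma \ref{keineDokumente}(\textit{ii}); a fully analogous argument with \eqref{ad2} in place of \eqref{ad1} produces the left associativity $(N \otimes_\ahha N') \fren M \simeq N \fren (N' \fren M)$ and the corresponding unit constraint.

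Finally, combining both structures, I would invoke Lemma \ref{glenngould} verbatim: its proof already supplies the middle associativity constraint $\hom^\ell(Z, \hom^r(Y, M)) \simeq \hom^r(Y, \hom^\ell(Z, M))$ and the verification of the hexagonal compatibilities \eqref{tarrega5}--\eqref{tarrega6}, completing the bimodule category claim.

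The main obstacle, such as it is, is purely notational bookkeeping: the slot in which a given module lands under an iterated internal Hom (\emph{inner} versus \emph{outer}) depends on which of \eqref{ad1} or \eqref{ad2} has been applied, and the directionality of the induced associativity in the opposite category has to be tracked carefully against Definitions \ref{modcat} and \ref{ratagnan}. Nothing substantive remains beyond this bookkeeping; the explicit action formulas \eqref{umact} and \eqref{pmact} for the $U$-module structures on the internal Homs play no role at this level of abstraction, being already absorbed into the adjunctions themselves.
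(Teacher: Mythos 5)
Your proposal is correct and follows essentially the same route as the paper: the paper's proof likewise opens by noting that all statements follow directly from Lemma \ref{glenngould} combined with Lemma \ref{keineDokumente}, "so there is nothing to prove." The only difference is that the paper then goes on to record the explicit formulas \eqref{leitz12a}, \eqref{leitz11}, and \eqref{leitz33} for the associativity constraints and checks their $U$-linearity by hand -- not because this is logically needed for the corollary, but because these explicit forms are used in later computations (notably in the proof of Theorem \ref{tegel1}), whereas you correctly observe that at the level of the corollary itself the abstract adjunction/Yoneda argument suffices.
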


\begin{proof}
  All statements directly follow from the general case in Lemma \ref{glenngould} along with Lemma \ref{keineDokumente}, so there is nothing to prove.
  Nevertheless, for later use and the sake of explicit illustration of the abstract theory, let us discuss the involved associative constraints.
  
As for the right action,
for three left $U$-modules $M, N, P \in \umod$ there is 
 a left $U$-module isomorphism $(M \frenop P) \frenop
N \simeq M \frenop (P \otimes_\ahha N)$, that is, 
  \begin{equation*}
%    \label{leitz12}
    \psi_{\scriptscriptstyle M, P, N} \colon 
    \hom^\ell(P \otimes_\ahha N, M)
  \to \hom^\ell(N, \hom^\ell(P, M)),
  \end{equation*}
  which on the level of $k$-modules translates into a map
\begin{small}
  \begin{equation}
        \label{leitz12a}
        \begin{array}{rcl}
              \psi_{\scriptscriptstyle M, P, N} \colon 
      \Hom_\uhhu(P \otimes_\ahha N \otimes_\ahha U, M)
  \!\!&\to\!\!& \Hom_\uhhu(N \otimes_\ahha U, \Hom_\uhhu(P \otimes_\ahha U, M)),
      \\[2pt]
      f \mapsto \big\{n \otimes_\ahha u \!\!&\mapsto\!\!& \{ p \otimes_\ahha v \mapsto
f(p \otimes_\ahha v_{(1)} n \otimes_\ahha v_{(2)}u) 
\} \big\},
\\[2pt]
\{  g(n \otimes_\ahha u)(p \otimes_\ahha 1) \!\!&\mapsfrom\!\!& p \otimes_\ahha n \otimes_\ahha u \} \mapsfrom g.
    \end{array}
  \end{equation}
\end{small}
It is straightforward to see that both maps in \eqref{leitz12a} are well-defined and mutual inverses. That these are maps of left $U$-modules follows from \eqref{umact} by
\begin{small}
  \begin{equation*}
\begin{split}    
&(w \umact \psi_{\scriptscriptstyle M, P, N} f)(n \otimes_\ahha u)(p \otimes_\ahha v)
=
(\psi_{\scriptscriptstyle M, P, N} f)(n \otimes_\ahha uw)(p \otimes_\ahha v)
\\
&
=
f(p \otimes_\ahha v_{(1)} n \otimes_\ahha v_{(2)}uw)
=
(w \umact f)(p \otimes_\ahha v_{(1)} n \otimes_\ahha v_{(2)}uw)
\\
&
=
(\psi_{\scriptscriptstyle M, P, N} (w \umact f))(n \otimes_\ahha u)(p \otimes_\ahha v)
\end{split}
  \end{equation*}
  \end{small}
for $w, u, v \in U$, $p \in P$, and $n \in N$.
%The truly straightforward but laborious checking of (the analogous right module versions of) the two diagrams \eqref{tarrega3} and \eqref{tarrega4} is omitted.

   As for the left action, for three left $U$-modules $M, N, P \in \umod$ there is a left $U$-module isomorphism
  $
%  \phi_{\scriptscriptstyle P, N, M} \colon 
  P \fren
(N \fren
M) \simeq (P \otimes_\ahha N) \fren
M
$
as well, 
or
  \begin{equation}
    \label{leitz11}
    \phi_{\scriptscriptstyle P, N, M} \colon 
    \hom^r(P \otimes_\ahha N, M)
  \to \hom^r(P, \hom^r(N, M)),
  \end{equation}
  which 
  results into a map
  $
   \Hom_\Aopp(P \otimes_\ahha N, M)
  \to \Hom_\Aopp(P, \Hom_\Aopp(N, M))
  $
%on the level of $k$-modules
  given by the Hom-tensor adjunction.
That this is an isomorphism (of $k$-modules) is obvious, whereas
using the left $U$-action \eqref{pmact} on $\Hom_\Aopp(N,M)$, along with Eq.~\eqref{Sch5} we immediately see that for
$f \in \Hom_\Aopp(P \otimes_\ahha N, M)$ one has, abbreviating $\phi =  \phi_{\scriptscriptstyle P, N, M}$, 
\begin{small}
  \begin{equation*}
    \begin{split}
    (u \pmact (\phi f))(p)(n)
    &=
    \big(u_+ \pmact (\phi f)(u_- p)\big)(n)
    =
    u_{++}(\phi f)(u_- p)(u_{+-} n)
\\ &
    =
    u_{+} f(u_{-(1)} p \otimes_\ahha u_{-(2)} n)
    =
    (u \pmact f)(p \otimes_\ahha n)
    = \phi(u \pmact f)(p)(n)
\end{split}
    \end{equation*}
  \end{small}
for any $u \in U$, % and  abbreviating $\phi =  \phi_{\scriptscriptstyle P, N, M}$,
hence
$
u \pmact (\phi f) = \phi(u \pmact f),
$
and therefore we obtain an isomorphism of left $U$-modules as well.

%In order to effectively obtain a left module category in the sense of Definition \ref{modcat}, we still have to verify the pentagon resp.\ triangle axiom \eqref{tarrega3} resp.\ \eqref{tarrega4}, which, however, follow easily from the properties of the standard Hom-tensor adjunction, $\umod$ being strict.

Finally, let us discuss the {\em middle associativity constraint} from Definition \ref{ratagnan}. This is the left $U$-module isomorphism
 $
% \gvt_{\scriptscriptstyle P, M, N} \colon 
 ( P \fren M) \frenop N 
 \overset{\simeq}{\lra}
P \fren (M \frenop N)
 $
for any $M, N, P \in \umod$, that is, 
$
\hom^\ell(N, \hom^r(P, M)) \simeq \hom^r(P, \hom^\ell(N, M)). 
$
Explicitly, this map is given by the $k$-module isomorphism
\begin{small}
  \begin{equation}
    \label{leitz33}
    \begin{array}{rcl}
%\begin{split}
   \gvt_{\scriptscriptstyle {\scriptscriptstyle{P, M, N}}} \colon  
    \Hom_\uhhu(N \otimes_\ahha \due U \lact {}, \Hom_\Aopp(P, M)) &\!\!\!\to&\!\!\!
    \Hom_\Aopp(P, \Hom_\uhhu (N \otimes_\ahha \due U \lact {}, M)),
    \\[2pt]
    f &\!\!\!\mapsto&\!\!\! \big\{p \mapsto \{n \otimes_\ahha u \mapsto f(n \otimes_\ahha u_{(1)})(u_{(2)}p) \} \big\},
    \\[2pt]
    \big\{ \{g(u_-p)(n \otimes_\ahha u_+) \mapsfrom p\} \mapsfrom n \otimes_\ahha u\big\} &\!\!\!\mapsfrom&\!\!\! g. 
%\end{split}
    \end{array}
    \end{equation}
  \end{small}
Verifying that these maps are well-defined and mutual inverses is easy and
that $\gvt$ is in particular a map (and hence an isomorphism) of left $U$-modules is seen by
%\begin{small}
  \begin{equation*}
    \begin{split}
      (v \pmact \gvt f)(p)(n \otimes_\ahha u)
      &=
      \big(v_+ \umact (\gvt f)(v_- p)\big)(n \otimes_\ahha u)
     % \\
     % &
      = (\gvt f)(v_- p)(n \otimes_\ahha uv_+)
      \\
      &
      = f(n \otimes_\ahha u_{(1)} v_{+(1)})(u_{(2)} v_{+(2)}v_- p)
       = f(n \otimes_\ahha u_{(1)} v)(u_{(2)} p)
 \\
      &
       = \big(\gvt (v \umact f)\big)(p)(n \otimes_\ahha u),
    \end{split}
\end{equation*}
%\end{small}
abbreviating $\gvt =  \gvt_{\scriptscriptstyle P, M, N}$, where we used the left $U$-actions \eqref{pmact} and \eqref{umact} in the first step and Eq.~\eqref{Sch2} in the fourth.
  \end{proof}

\vspace*{.1cm}
\begin{center}
* \quad * \quad *
\end{center}
\vspace*{.15cm}

The preceding lemma allows to investigate the centre $\cZ_{\umod^\op}(\umod)$
in the sense of Definition \ref{wisconsin} of the bimodule category $\umod$ over $\umod^\op$; but before doing so, we need to introduce more algebraic structure to get meaningful statements, {\em i.e}, that of contramodules resp.\ anti Yetter-Drinfel'd contramodules as already hinted at in the Introduction.

\subsection{Contramodules over bialgebroids}
Contramodules in the sense of \cite{EilMoo:FORHA}
over coalgebras or corings are a not too wide-spread notion, which is somehow surprising as they turn out to be as natural as comodules (see, {\em e.g.}, \cite{BoeBrzWis:MACOMC, Brz:HCHWCC, Pos:C}): as a first approach, they can be thought of as an infinite dimensional version of modules over the dual of the coring in question. They are of interest since not only they are related to the centre of the bimodule category $\umod$ under investigation but (as a consequence) 
also because they appear as
natural coefficients in the cyclic theory of $\Ext$ groups. As such, they were implicitly used right from the beginning in Connes' classical cyclic cohomology theory with its values in the $k$-linear dual of an associative algebra,
as elucidated in \cite[\S6]{Kow:WEIABVA}.

%In particular, we will need them to discuss the bimodule centre in $\umod^\op$ later on.

\begin{dfn}
\label{schoenwaers}
A {\em right contramodule} over a left bialgebroid $(U,A)$ is a right $A$-module $M$ together with a right $A$-module map 
$$
\gamma \colon  \Hom_\Aopp(U_\ract,M) \to M, 
$$
usually termed the {\em contraaction}, subject to the diagram
\begin{small}
  \begin{equation*}
  \begin{split}
  &	\xymatrix{\Hom_\Aopp(U, \Hom_\Aopp(U,M))
	\ar[rrr]^-{\scriptstyle{\Hom_\Aopp(U,\gamma)}} \ar[d]_-{\simeq}&
	& &
	\Hom_\Aopp(U,M) \ar[d]^-{\gamma} \\
	\Hom_\Aopp(U_\ract \otimes_\ahha \due U \lact {},M)
	\ar[rr]_-{\scriptstyle{\Hom_\Aopp(\gD_\ell, M)}} && \Hom_\Aopp(U,M) \ar[r]_-{\gamma}
&	M }
\end{split}
\end{equation*}
\end{small}
    %\\
%& \!\!\!\!\!\!\!\!\!\!\!
\mbox{\normalsize{which we will refer to as {\em contraassociativity}, as well as}}
%\\
\begin{small}
  \begin{equation*}
  \begin{split}
& \xymatrix{\Hom_\Aopp(A,M) \ar[rr]^-{\Hom_\Aopp(\gve,M)} \ar[dr]_-{\simeq} & & \Hom_\Aopp(U,M)
    \ar[dl]^-{\gamma} \\ & M &  }
\end{split}
\end{equation*}
\end{small}
to which we refer as {\em contraunitality.}
\end{dfn}
The adjunction of the leftmost vertical arrow in the first diagram is to be understood
with respect to the right $A$-action
$
fa  \coloneqq  f(a \lact -) 
$
on $\Hom_\Aopp(U_\ract,M)$; the required right $A$-linearity of $\gamma$ then reads
\begin{equation}
\label{passionant}
%$
\gamma\big(f(a \lact -) \big) = \gamma(f)a,
%$
\end{equation}
usually excluding the well-definedness of a {\em trivial} right contraaction $f \mapsto f(1)$.
Any contramodule $M$ moreover has an {\em induced} left $A$-action 
\begin{equation}
\label{alleskleber}
am  \coloneqq  \gamma\big(m\gve(- \bract a) \big) = \gamma\big(m\gve(a \blact -) \big),
\end{equation}
which turns $M$ into an $A$-bimodule and $\gamma$ into an $A$-bimodule map,
\begin{equation}
  \label{tamtamdatam}
\gamma\big(f ( - \bract a)\big) = a \gamma\big(f{\sma -}\big),
\end{equation}
see \cite[Eq.~(2.37)]{Kow:WEIABVA}. This yields a
a forgetful functor 
\begin{equation}
\label{gaeta}
\mathbf{Contramod}\mbox{-}U \to \amoda
\end{equation}
from the category of right $U$-contramodules to that of $A$-bimodules.

For $f \in \Hom_\Aopp(U,M)$ we may (non-consistently, depending on readability in long computations) write both $\gamma(f \sma{-})$ as well as $\gamma(f \sma{\cdot})$ or simply $\gamma(f)$ to underline where the $U$-dependency is located: this way, the contraassociativity may be more compactly expressed as
\begin{equation}
\label{carrefour1}
\dot\gamma\big(\ddot\gamma(g( \cdot \otimes_\ahha \cdot\cdot))\big) 
= \gamma\big(g(-_{(1)} \otimes_\ahha -_{(2)})\big),
\end{equation}
for $g \in \Hom_\Aopp(U_\ract \otimes_\ahha \due U \lact {},M)$,
where the number of dots match the map $\gamma$ with the respective argument, and
%, and where the internal contraaction $\ddot\gamma$ has to be carried out first as evident from the first diagram in Definition \ref{schoenwaers}.
%The second diagram explicitly then reads as
contraunitality as 
\begin{equation}
\label{carrefour2}
\gamma( m \gve\sma{-}) = m
\end{equation}
for $m \in M$.
Finally, 
a {\em morphism} $\gvf \colon  M \to M'$ of contramodules is a map of right $A$-modules commuting with the contraaction, that is,
$
\gvf\big(\gamma(f)\big) =
%\gamma\big( \Hom_\Aopp(U, \gvf) \circ f\big).
\gamma\big(\gvf \circ f\big).
$

\subsubsection{Anti Yetter-Drinfel'd contramodules}
\label{atacvantaggi}
As already mentioned, coefficients in cyclic (co)homology theories typically have more than one algebraic structure, like actions, coactions, contraactions, and so forth. A compatibility between these is in general not required as long as one does not impose the condition that the cyclic operator powers to the identity. On the contrary, if one does, one is led to the notion of {\em anti Yetter-Drinfel'd} kind of objects:

\begin{dfn}
\label{chelabertaschen1}
An {\em anti Yetter-Drinfel'd (aYD) contramodule} $M$ over a left Hopf algebroid $(U,A)$ is a left $U$-module (with action denoted by juxtaposition) being at the same time a right $U$-contramodule (with contraaction $\gamma$) such that both underlying $A$-bimodule structures \eqref{peinture} and \eqref{gaeta} coincide, 
{\em i.e.}, 
\begin{equation}
\label{romaedintorni}
a \lact m \ract b = amb, \qquad m \in M, \ a,b \in A,
\end{equation}
and such that contraaction followed by action results in 
\begin{equation}
\label{nawas1}
u (\gamma(f)) = \gamma \big(u_{+(2)} f(u_-\sma{-}u_{+(1)}) \big), \qquad \forall u \in U, \ f \in \Hom_\Aopp(U,M).
%= \gamma \big((u_{(2)} \rightslice f)({\sma -}u_{(1)}) \big)
\end{equation}
% for all $u \in U, \ f \in \Hom_\Aopp(U,M)$. 
If action followed by contraaction results in the identity, {\em i.e.}, for all $m \in M$
\begin{equation}
\label{stablehalt}
\gamma(\sma{-}m)= m
\end{equation}
holds, then $M$ is called {\em stable}, where %$ \Hom_\Aopp(U,M) \ni 
$\sma{-}m  \colon u \mapsto um$ as a map in $ \Hom_\Aopp(U,M)$.
\end{dfn}
In \cite[p.~1093]{Kow:WEIABVA}
%{\em op.~cit.}, p.~1093, 
one can find additional information about the (not so obvious) well-definedness of Eq.~\eqref{nawas1} and further implications: for example, if \eqref{romaedintorni} holds, then
\begin{equation}
  \label{hatschi}
\gamma(a \lact f\sma{-}) = 
\gamma\big(f(a \blact -)\big)
\end{equation}
is true as well,
where on the left hand side the left $A$-action on $M$ is meant.

\begin{rem}
  \label{yd}
  The category $\contramodu$ of right $U$-contramodules is, in general, not monoidal and therefore neither is $\cayd$, the category of anti Yetter-Drinfel'd contramodules, nor $\csayd$, the category of stable ones.
  However, in \cite[Prop.~3.3]{Kow:ANCCOTCDOE}
  it is shown that $\contramodu$ is a left module category over $\ucomod$, the monoidal category of left $U$-comodules ({\em cf.}~\S\ref{suppe}).
This
restricts to the structure
  $$
  \yd \times  \cayd
  \to  \cayd, \quad (N, M) \mapsto \hom^r(N,M)
$$
  of a left module category on $\cayd$ over the monoidal category $\yd$ of Yetter-Drinfel'd modules (these are $A$-bimodules with compatible left $U$-action and left $U$-coaction, which form the monoidal centre of $\umod$, see \cite[\S4]{Schau:DADOQGHA}).
  Note that this restriction 
 is precisely induced by the action \eqref{methamill7} defining the right internal Homs for $\umod$.
\end{rem}

\subsection{The bimodule centre in the bialgebroid module category}

Having introduced contramodules, we can now come back to examine the centre of $\umod$ with respect to its adjoint actions. 
Recall from Definition \ref{wisconsin}
that the centre $\cZ_{\umod^\op}(\umod)$ is
formed by all pairs $(M, \tau)$ of objects $M \in \umod$ for which there is a family of isomorphisms
$$
\tau_\enne \colon 
N \fren M
\stackrel{\simeq}{\lra}
M \frenop N
$$
natural in $N$. With respect to its full subcategory~$\cZ'_{\umod^\op}(\umod)$ of stable objects as in Definition \ref{stablepable},
%defined by the condition that the identity map $\id_\emme \in \Hom_\uhhu(M,M)$ is mapped to itself by the chain of isomorphisms \eqref{subcategory},
we
have the following result:

\begin{theorem}
  \label{tegel1}
Let a left bialgebroid $(U, A)$ in addition be left Hopf.
\begin{enumerate}
  \compactlist{99}
  \item
    Then any
    aYD contramodule $M$ induces a central structure
  \begin{equation*}
    \tau_\enne \colon 
    \hom^r(N,M)
    %=    M \frenop N    
    \to
    %N \fren M =
    \hom^\ell(N,M),
    \end{equation*}
explicitly given on the level of $k$-modules by
    \begin{small}
    \begin{equation}
  \label{michigan21}  
      \begin{array}{rcl}
        \tau_\enne \colon
\Hom_\Aopp(N, M)
    &\to&
        \Hom_\uhhu(N \otimes_\ahha \due U \lact {}, M),
\\[2pt]
g &\mapsto& \big\{
n \otimes_\ahha u \mapsto
\gamma\big((u \pmact g)(\sma\cdot n)\big)
\big\}, 
  \\[2pt]
  \big\{\gamma(f(n \otimes_\ahha -)) \mapsfrom n\big\} &\mapsfrom& f.
      \end{array}
    \end{equation}
    \end{small}
   \item
      Vice versa, for a pair $(M, \tau)$ in the centre $\cZ_{\umod^\op}(\umod)$, the right $U$-contraaction on $M$ defined by means of
      \begin{equation}
        \label{ghettokaisers3}
\gamma(g)  \coloneqq  (\tau_\uhhu g)(1 \otimes_\ahha 1), 
\end{equation}
      for every $g \in \Hom_\Aopp(U_\ract,M)$, induces the structure of an anti Yet\-ter-Drin\-fel'd contramodule on $M$.
%
%      which is stable if $(M, \tau) \in \cZ'_{\umod^\op}(\umod)$.
\item
Both preceding parts together imply an equivalence
$$
 \cayd \simeq \cZ_{\umod^\op}(\umod)
  $$
 of categories.
\item
  Imposing stability on the respective objects leads to an equivalence
  $$
 \csayd \simeq \cZ'_{\umod^\op}(\umod)
 $$
of subcategories.
\end{enumerate}
\end{theorem}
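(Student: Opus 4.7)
The plan is to work directly with the formulas \eqref{michigan21} and \eqref{ghettokaisers3} and check the bimodule-centre axioms against the aYD-contramodule axioms, using the explicit associators $\phi$, $\psi$, $\gvt$ from Corollary \ref{wasasesam1}. For part (i), given an aYD contramodule $M$, I would first verify that the map $\tau_\enne(g)\colon n \otimes_\ahha u \mapsto \gamma\big((u \pmact g)(\sma\cdot n)\big)$ really lies in $\Hom_\uhhu(N \otimes_\ahha \due U \lact {}, M)$: the $A$-balancing on the tensor and left $U$-linearity of $\tau_\enne(g)$ follow from the right $A$-linearity \eqref{passionant} of $\gamma$, the bimodule compatibility \eqref{tamtamdatam}, and crucially the aYD relation \eqref{nawas1}. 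Left $U$-linearity of $\tau_\enne$ itself is a direct computation from \eqref{pmact} and \eqref{umact}; naturality in $N$ is immediate. Checking that the proposed inverse $f \mapsto \{n \mapsto \gamma(f(n \otimes_\ahha -))\}$ is indeed a two-sided inverse uses contraunitality \eqref{carrefour2} in one direction and \eqref{nawas1} combined with contraassociativity \eqref{carrefour1} in the other.

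The main obstacle is the hexagon \eqref{tarrega7}. One must chase a generic $g \in \Hom_\Aopp(P \otimes_\ahha N, M)$ around both paths of the diagram, plugging in the explicit associators \eqref{leitz11}, \eqref{leitz12a}, \eqref{leitz33} and the definition \eqref{michigan21}; after simplification with the left Hopf identities relating $u_+ \otimes_\Aopp u_-$ to the coproduct, the required equality should collapse to a single invocation of contraassociativity \eqref{carrefour1}, with \eqref{nawas1} inserted at exactly one point to commute a residual $U$-action past $\gamma$.

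For part (ii), starting from $(M, \tau) \in \cZ_{\umod^\op}(\umod)$ and defining $\gamma$ by \eqref{ghettokaisers3}, each aYD axiom is recovered from a centre axiom. Right $A$-linearity of $\gamma$ and the bimodule matching \eqref{romaedintorni} follow from $\tau_\uhhu$ being a $U$-linear map together with the explicit $A$-actions on the internal Homs from \S\ref{arpino}; contraunitality is obtained from the unit triangle \eqref{tarrega4} applied to $\tau$; contraassociativity is precisely the hexagon \eqref{tarrega7} specialised to $P = N = U$ and evaluated at $1 \otimes_\ahha 1$, read in reverse through the same associator bookkeeping as in (i); and \eqref{nawas1} expresses the $U$-equivariance of $\tau_\uhhu$ on a shifted argument, again using the Hopf identities. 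The two constructions are mutually inverse: evaluating \eqref{michigan21} at $N = U$, $n = 1$, $u = 1$ returns \eqref{ghettokaisers3} by contraunitality, while conversely the hexagon together with naturality forces any central structure to coincide with the expression built from the recovered $\gamma$. Morphisms correspond because a $U$-linear map $M \to M'$ is a contramodule morphism if and only if it commutes with $\tau_\uhhu$, and naturality then propagates this to commutation with every $\tau_\enne$; this establishes (iii).

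Part (iv) is essentially bookkeeping: tracking $\id_\emme$ through the chain \eqref{subcategory} for $\cC = \umod$ by means of the adjunctions \eqref{ad1}, \eqref{ad2} and the explicit central structure \eqref{michigan21}, the composite image is seen to be the map $m \mapsto \gamma(\sma{-}m)$, so the stability condition on $(M,\tau)$ coincides with \eqref{stablehalt}, yielding the claimed equivalence of full subcategories.
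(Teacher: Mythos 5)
Your architecture coincides with the paper's: the explicit formulas \eqref{michigan21} and \eqref{ghettokaisers3}, a direct element chase of the hexagon \eqref{tarrega7} through the associators \eqref{leitz12a}, \eqref{leitz11}, \eqref{leitz33} of Corollary \ref{wasasesam1}, recovery of the contramodule axioms in part (ii) from naturality of $\tau$ (with respect to $\gD$, $\gve$, and right multiplications) together with the hexagon specialised to $P=N=U$, and the chase of $\id_\emme$ through \eqref{subcategory} for part (iv). One minor misattribution: in the hexagon verification the aYD condition \eqref{nawas1} is in fact not needed at all --- the chase closes with contraassociativity \eqref{carrefour1} combined with \eqref{Sch5} and the $U$-linearity of the map being transported; where \eqref{nawas1} genuinely enters is in showing that $\tau^{-1}_\enne$ intertwines the actions \eqref{pmact} and \eqref{umact}, i.e.\ is a morphism in $\umod$.

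There is, however, one step that would fail as you describe it: the two-sided-inverse property of $\tau_\enne$. You claim one direction follows from contraunitality \eqref{carrefour2} and the other from \eqref{nawas1} plus contraassociativity. Carrying the computation out, both composites collapse to the same obstruction: contraassociativity and \eqref{Sch3} give $\tau^{-1}_\enne(\tau_\enne g)(n) = \gamma\big(\sma{-}\,g(n)\big)$, while \eqref{nawas1}, \eqref{Sch5}, \eqref{Sch2}, \eqref{carrefour1} and the $U$-linearity of $f$ give $\tau_\enne(\tau^{-1}_\enne f)(n\otimes_\ahha u) = \gamma\big(\sma{-}\,f(n\otimes_\ahha u)\big)$. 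Each direction therefore reduces to the identity $\gamma(\sma{-}m)=m$, which is the \emph{stability} condition \eqref{stablehalt}, not contraunitality: the axiom $\gamma(m\gve\sma{-})=m$ involves the counit rather than the left $U$-action, and the two maps $u\mapsto um$ and $u\mapsto m\gve(u)$ do not coincide for a general aYD contramodule, so contraunitality cannot close either direction. This is precisely the delicate point the paper does not rederive but defers to \cite[Thm.~4.15]{Kow:WEIABVA}, whose invertibility statement is tied to stability; your sketch needs to confront it explicitly rather than route it through \eqref{carrefour2}.
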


\begin{proof}
  (i): That $\tau_\enne$ (and its inverse) is well-defined and
  a morphism of left $U$-modules if $M$ is an aYD contramodule, and invertible in the given sense if $M$ is stable has already been proven in \cite[Thm.~4.15]{Kow:WEIABVA}. We only explicitly show here that $\tau^{-1}_\enne$ is a $U$-module morphism to illustrate where the aYD condition \eqref{nawas1} is precisely needed: for $f \in \Hom_\uhhu(N \otimes_A U,M)$ and $n \in N$, we have
\begin{small}
  \begin{equation*}
    \begin{array}{rcl}
(u \pmact \tau^{-1}_\enne f)(n) 
&
      \stackrel{\scriptscriptstyle{\eqref{pmact}}}{=}
      &
u_+(\tau^{-1}_\enne f)(u_-n) 
\stackrel{\scriptscriptstyle{\eqref{michigan21}}}{=}
     u_+ \big( \gamma(f(u_- n \otimes_A -) \big)
     \\
     &
     \stackrel{\scriptscriptstyle{\eqref{nawas1}}}{=}
&
     \gamma\big(u_{++(2)} f(u_- n \otimes_A u_{+-} {\sma -}u_{++(1)}) \big)
\\
  &   \stackrel{\scriptscriptstyle{\eqref{Sch5}}}{=}
&
\gamma\big(u_{+(2)}u_- f(n \otimes_A {\sma -}u_{+(1)}) \big)
      \stackrel{\scriptscriptstyle{\eqref{Sch2}}}{=}
      \gamma\big(f(n \otimes_A {\sma -}u) \big)
      \stackrel{\scriptscriptstyle{\eqref{umact}}}{=}
\tau^{-1}_\enne(u \umact f)(n),
 \end{array}
   \end{equation*}
\end{small}
where in the fourth step we used the $U$-linearity of $f$.
  Hence, 
  \begin{equation}
    \label{abholstation}
    u \pmact \tau^{-1}_\enne(f)
    =
    \tau^{-1}_\enne(u \umact f),
\end{equation}
as claimed.
Let us moreover show that $\tau$ is natural in $N$: for any left $U$-module morphism $\gs \colon  N \to N'$ we want to see that $\tau_{\enne} \circ \hom^r(\gs, M) = \hom^\ell(\gs, M) \circ \tau_{\enne'}$.
  %that is, $\tau^{-1}_{N} (g \circ \gs) = \tau^{-1}_{N'}(g) \circ \gs$.
  Indeed, by the $U$-linearity of $\gs$, one obtains
  \begin{equation}
    \label{omonia}
\begin{split}
    \tau_{\enne} (g \circ \gs)(n \otimes_\ahha u)
    &=  \gamma\big((u \pmact (g \circ \gs))(\sma\cdot n)\big)
    \\
%    &= \gamma\big(u_+(g \circ \gs)(u_-\sma\cdot n)\big)
& = \gamma\big(u_+g(u_-\sma\cdot \gs(n))\big)
    = \tau_{\scriptscriptstyle N'}(g) (\gs(n) \otimes_\ahha u), 
\end{split}
\end{equation}
for any $g \in \Hom_\Aopp(N', M)$ and $n \in N$.

On top, we need to prove that the hexagon axiom \eqref{tarrega7} commutes, which here takes the following explicit form:
\begin{small}
\begin{equation}
\label{tarrega81}
\xymatrix@C=0.38cm{
  \Hom_\Aopp(P, \Hom_\uhhu(N \otimes_\ahha U, M))
  \ar@{<-}[rr]_{\gvt_{\scriptscriptstyle P,M,N}}
\ar@{<-}[d]_{\Hom_\Aopp(P, \tau_\enne) \, }
& &
\Hom_\uhhu(N \otimes_\ahha U, \Hom_\Aopp(P,M))
\ar[d]^{\, \Hom_\uhhu(N \otimes_\ahha U, \tau_\pehhe)}
&
  \\
  \Hom_\Aopp(P, \Hom_\Aopp(N, M))
  \ar@{<-}[d]_{\phi_{\scriptscriptstyle P, N, M}}
  & &
  \Hom_\uhhu(N \otimes_\ahha U, \Hom_\uhhu(P \otimes_\ahha U, M))
  \ar@{<-}[d]^{\quad \psi_{\scriptscriptstyle M, P, N}}
  \\
  \Hom_\Aopp(P \otimes_\ahha N, M)
  \ar[rr]_{\tau_{P \otimes N}}
  & &
    \Hom_\uhhu(P \otimes_\ahha N \otimes_\ahha U, M)
  }
\end{equation}
\end{small}
Verifying that this diagram in fact commutes with respect to the central structure \eqref{michigan21} is done as follows. First, for better readability, by abuse of notation let us again abbreviate $\gvt = \gvt_{\scriptscriptstyle P,N,M}$, and likewise for $\phi$ and $\psi$.
For
%$p \otimes_\ahha n \otimes_\ahha u \in P \otimes_\ahha N \otimes_\ahha U$ and
$f \in \Hom_\Aopp(P \otimes_\ahha N, M)$, one then directly computes:
\begin{small}
\begin{eqnarray*}
  &&
  (\psi^{-1} \circ \Hom_\uhhu(N \otimes_A U, \tau_\pehhe)  \circ \gvt^{-1} \circ \Hom_\Aopp(P, \tau_\enne) \circ \phi \circ f)(p \otimes_\ahha n \otimes_\ahha u)
\\
&\overset{\scriptscriptstyle{\eqref{leitz12a}}}{=}&
  (\Hom_\uhhu(N \otimes_A U, \tau_\pehhe)  \circ \gvt^{-1} \circ \Hom_\Aopp(P, \tau_\enne) \circ \phi \circ f)(n \otimes_\ahha u)(p \otimes_\ahha 1)
   \\
  &
\overset{\scriptscriptstyle{\eqref{michigan21}}}{=} 
&
\gamma\pig(
  (\gvt^{-1} \circ \Hom_\Aopp(P, \tau_\enne) \circ \phi \circ f)(n \otimes_\ahha u)(\sma\cdot p)
\pig)
    \\
  &
\overset{\scriptscriptstyle{\eqref{leitz33}}}{=} 
&
\gamma\pig(
\big(\Hom_\Aopp(P, \tau_\enne) \circ \phi \circ f\big)
(u_- \sma\cdot p)(n \otimes_\ahha u_+)
\pig)
    \\
  &
\overset{\scriptscriptstyle{\eqref{michigan21}}}{=} 
&
\dot\gamma\Big(\ddot\gamma\pig(
\big(u_+ \pmact \big((\phi \circ f)
(u_- \sma\cdot p)\big)(\sma{\cdot\cdot}n)
\pig)\Big)
    \\
  &
\overset{\scriptscriptstyle{\eqref{pmact}}}{=} 
&
\dot\gamma\Big(\ddot\gamma\pig( u_{++}\big((\phi \circ f)
(u_- \sma\cdot p)\big)(u_{+-} \sma{\cdot\cdot}n)
\pig)\Big)
\\
&
\overset{\scriptscriptstyle{\eqref{carrefour1}, \eqref{Sch5}}}{=} 
&
\gamma\pig( u_{+} (\phi \circ f)
(u_{-(1)} {\sma\cdot}_{(1)} p)(u_{-(2)} {\sma\cdot}_{(2)} n)
\pig)
\\
&
\overset{\scriptscriptstyle{\eqref{leitz11}, \eqref{mancino}}}{=} 
&
\gamma\pig( u_{+} f
\big((u_{-} \sma\cdot) \mancino (p \otimes_\ahha n)\big)
\pig)
\\
&
\overset{\scriptscriptstyle{\eqref{pmact}}}{=} 
&
\gamma\pig( \big(u \pmact f\big)\big(\sma\cdot \mancino (p \otimes_\ahha n)\big)
\pig)
\\
&
\overset{\scriptscriptstyle{\eqref{michigan21}}}{=} 
&
\tau_{\scriptscriptstyle P \otimes_\ahha N} f (p \otimes_\ahha n \otimes_\ahha u),
\end{eqnarray*}
\end{small}
which proves the commutativity of Diagram \eqref{tarrega81}.

  (ii):
In this part, we have to show first that \eqref{ghettokaisers3} indeed defines a contraaction in the sense of Definition \ref{schoenwaers}. To start with, the $U$-linearity \eqref{abholstation} of $\tau_\uhhu$ resp.\ of its inverse implies that
\begin{small}
\begin{eqnarray*}
%  &&
 \gamma\big(g(a \lact -)\big)
%\\
 &\overset{\scriptscriptstyle{\eqref{pmact}, \eqref{Sch9}}}{=}
 &
 \big(\tau_\uhhu (t(a) \pmact g)\big)(1 \otimes_\ahha 1)
   \\
  &
\overset{\scriptscriptstyle{\eqref{abholstation}}}{=} 
&
 \big(t(a) \umact \tau_\uhhu g\big)(1 \otimes_\ahha 1)
    \\
  &
\overset{\scriptscriptstyle{\eqref{umact}}}{=} 
&
 \big(\tau_\uhhu g\big)(1 \otimes_\ahha t(a))
%    \\
%  &
\ \ \overset{\scriptscriptstyle{\eqref{mancino}}}{=} \ \
%&
 \big(\tau_\uhhu g\big)(1 \otimes_\ahha 1)a
 %   \\
 % &
\ \ \overset{\scriptscriptstyle{\eqref{ghettokaisers3}}}{=} \ \
%&
 \gamma(g)a
\end{eqnarray*}
    \end{small}
for any $a \in A$, which is the required right $A$-linearity \eqref{passionant}.

As for contraassociativity, observe first that the coproduct $\Delta \colon  U \to U \otimes_\ahha U$ is a morphism in $\umod$ as implied by the diagonal action \eqref{mancino}. We therefore have, by means of the naturality \eqref{omonia} of the central structure,
that
$
\tau_{\scriptscriptstyle{U}} (g \circ \Delta)
=
(\tau_{\scriptscriptstyle{U \otimes_\ahha U}} g) \circ (\Delta \otimes_\ahha \id)
$
for $g \in \Hom_\Aopp(U \otimes_\ahha U, M)$, and using this in the first step below, together with the hexagon axiom \eqref{tarrega81} in the third, we obtain:
\begin{small}
\begin{eqnarray*}
  %&&
\gamma\big(g \circ \gD)
%\\
&\overset{\scriptscriptstyle{\eqref{ghettokaisers3}}}{=}&
 \pig(\tau_{\scriptscriptstyle{U \otimes_\ahha U}} g) \circ (\Delta \otimes_\ahha \id)\pig)(1 \otimes_\ahha 1)
   \\
  &
\overset{\scriptscriptstyle{}}{=} 
&
 (\tau_{\scriptscriptstyle{U \otimes_\ahha U}} g)(1 \otimes_\ahha 1 \otimes_\ahha 1)
    \\
  &
\overset{\scriptscriptstyle{\eqref{tarrega81}}}{=} 
&
\big(\psi^{-1} \circ \Hom_\uhhu(U \otimes_\ahha U, \tau_\uhhu)  \circ \gvt^{-1} \circ \Hom_\Aopp(U, \tau_\uhhu) \circ \phi \circ g\big)(1 \otimes_\ahha 1 \otimes_\ahha 1)
    \\
  &
\overset{\scriptscriptstyle{\eqref{leitz12a}}}{=} 
&
\big(\Hom_\uhhu(U \otimes_\ahha U, \tau_\uhhu)  \circ \gvt^{-1} \circ \Hom_\Aopp(U, \tau_\uhhu) \circ \phi \circ g\big)(1 \otimes_\ahha 1)(1 \otimes_\ahha 1)
    \\
  &
\overset{\scriptscriptstyle{\eqref{ghettokaisers3}}}{=} 
&
\dot\gamma\big((\gvt^{-1} \circ \Hom_\Aopp(U, \tau_\uhhu) \circ \phi \circ g)(1 \otimes_\ahha 1)\sma\cdot \big)
\\
&
\overset{\scriptscriptstyle{\eqref{leitz33}}}{=} 
&
\dot\gamma\big((\Hom_\Aopp(U, \tau_\uhhu) \circ \phi \circ g)\sma\cdot (1 \otimes_\ahha 1)\big)
\\
&
\overset{\scriptscriptstyle{\eqref{ghettokaisers3}}}{=} 
&
\dot\gamma\big(\ddot\gamma((\phi \circ g)\sma\cdot \sma{\cdot\cdot})\big)
\\
&
\overset{\scriptscriptstyle{\eqref{leitz11}}}{=} 
&
\dot\gamma\big(\ddot\gamma(g(\cdot \otimes_\ahha \cdot\cdot))\big),
\end{eqnarray*}
\end{small}
which proves the contraassociativity \eqref{carrefour1}.
Contraunitality 
is once more proven with the help of the naturality %\eqref{omonia}
of $\tau$: the bialgebroid counit $U \to A$ defines an $U$-action on $A$ by means of $u \mancino a  \coloneqq  \gve(u \bract a)$ % = \gve(a \blact u)$
and, by $\gve(uv) = \gve(u \bract \gve(v))$, this yields a morphism in $\umod$.
Considering then that for $N = A$ the central structure $\tau_\ahha \colon  \hom^r(A, M) \!\simeq\! M \!\to\! M \!\simeq\! \hom^\ell(A,M)$ is the identity map, we have
$$
\gamma(m\gve\sma\cdot )
=
\tau_\uhhu(L_m \circ \gve)(1_\uhhu \otimes_\ahha 1_\uhhu)
=
\tau_\ahha(L_m)(\gve(1_\uhhu) \otimes_\ahha 1_\uhhu)
=
L_m(1_\ahha)
= m, 
$$
which is the contraunitality \eqref{carrefour2}, 
where we defined
$L_m \colon  A \to M, \ a \mapsto ma$ as an element in $\Hom_\Aop(A, M) \simeq M$.

That the so-defined right $U$-contraaction \eqref{ghettokaisers3} together with the left $U$-action \eqref{pmact} defines on $M$ an aYD structure is seen as follows: for $u \in U$ and $g \in \Hom_\Aopp(U,M)$, we have
\begin{equation*}
\begin{split}
  u\big(\gamma(g)\big)
  &=
  u (\tau_\uhhu g)(1 \otimes_\ahha 1)
  \\
  &
  = (\tau_\uhhu g)(u_{(1)} \otimes_\ahha u_{(2)})
  \\
  &
  = (u_{(2)} \umact \tau_\uhhu g)(u_{(1)} \otimes_\ahha 1)
\\&
  = \tau_\uhhu (u_{(2)} \pmact g)(u_{(1)} \otimes_\ahha 1)
  \\
  &
  = \tau_\uhhu \big((u_{(2)} \pmact g)(\sma\cdot u_{(1)})\big)(1 \otimes_\ahha 1)
  = \gamma \big((u_{(2)} \pmact g)(\sma\cdot u_{(1)})\big),
  \end{split}
\end{equation*}
where in the second step we used the $U$-linearity of $\tau_\uhhu g$ together with \eqref{mancino}, moreover Eq.~\eqref{umact} in the third step, in the fourth that  $\tau_\uhhu$ is a left $U$-module morphism, see Eq.~\eqref{abholstation}, and in the fifth the naturality of $\tau_\uhhu$ along with the fact that right multiplication $R_u \colon  U \to U, \ v \mapsto vu$ with an element $u \in U$ is a morphism in $\umod$. By \eqref{peinture},
this simultaneously proves \eqref{romaedintorni} and \eqref{nawas1}.

(iii):
In this third part, we have to show two things:
first,
that any morphism $M \to M'$ of aYD contramodules over $U$ induces a morphism $(M, \tau) \to (\tilde M, \tilde\tau)$ between the corresponding objects in the bimodule centre (and vice versa); second, that the two procedures of how to obtain a central structure from a right $U$-contraaction and a right $U$-contraaction from a central structure are mutually inverse.

As for the first issue, if $\gvf \colon  M \to \tilde M$ is a morphism of aYD contramodules,
 we have to show that for any $N \in \umod$ the diagram
\begin{small}
\begin{equation}
\label{verite1}
\xymatrix{
\Hom_\uhhu(N \otimes_\ahha U, M)
  \ar@{<-}[r]^-{\tau_N}
\ar[d]_{\Hom_\uhhu(N \otimes_\ahha U, \gvf) \, }
& 
\Hom_\Aopp(N,M) 
\ar[d]^{\, \Hom_\Aopp(N, \gvf)}
  \\
  \Hom_\uhhu(N \otimes_\ahha U, \tilde M)
  \ar@{<-}[r]_-{\tilde\tau_N}
  & 
  \Hom_\Aopp(N, \tilde M)
  }
\end{equation}
\end{small}
commutes, and this is obvious since $\gvf$ is both a morphism of right $U$-contramodules and left $U$-modules: therefore, for $f \in \Hom_\uhhu(N \otimes_\ahha U, M)$,
\begin{small}
  \begin{equation*}
    \begin{split}
 \gvf\big(\tau^{-1}_N f(n) \big)
 =
 \gvf\big(\gamma(f(n \otimes_\ahha -))\big)
 =
 \gamma\big((\gvf \circ f)(n \otimes_\ahha -)\big)
 =
 \tilde\tau^{-1}_{N}(\gvf \circ f)(n),
 \end{split}
 \end{equation*}
\end{small}
and hence also for $\tau_\enne$.
The other way round, let $\gvf \colon  (M, \tau) \to (\tilde M, \tilde\tau)$ be a morphism of objects in the centre $\cZ_{\umod^\op}(\umod)$, which means that $\gvf$ is a left $U$-module map and that diagram \eqref{verite1} commutes. In order to define a morphism of aYD contramodules, we only need to prove that $\gvf$ is also a right $U$-contramodule morphism as well. Indeed,
 $$
 \gvf\big(\gamma(g)\big)
 = \gvf\big(\tau_\uhhu g(1 \otimes_\ahha 1)\big) 
 = \tau_\uhhu \big((\gvf \circ g)(1 \otimes_\ahha 1)\big)
 = \gamma(\gvf \circ g)
 $$
for $g \in \Hom_\Aopp(N,M)$.

Second, we have to show that obtaining a central structure from a right $U$-contraaction and a right $U$-contraaction from a central structure are mutually inverse. As a matter of fact, if a right $U$-contraaction $\gamma$ on $M$ is given and a corresponding central structure $\tau$ (and its inverse) is defined by means of Eq.~\eqref{michigan21}, which, in turn, defines a right $U$-contraaction as in Eq.~\eqref{ghettokaisers3}, denoted by $\tilde\gamma$ for the moment, we have for $g \in \Hom_\Aopp(U, M)$
$$
\tilde\gamma(g) = \tau_\uhhu g(1 \otimes_\ahha 1) = \gamma\big((1 \pmact g)(\sma\cdot 1)\big) = \gamma(g),
$$
which is precisely the right $U$-contraaction we started with.

Vice versa, given a central structure $\tau$ that defines a right $U$-contraaction as in Eq.~\eqref{ghettokaisers3} that, in turn, defines a central structure as in Eq.~\eqref{michigan21}, denoted by $\sigma$ for the moment, equally reproduces the central structure $\tau$ we started with.
Indeed,
by Eqs.~\eqref{abholstation} and \eqref{umact},
we have 
\begin{equation*}
  \begin{split}
  \sigma_\enne^{-1}
  g(n \otimes_\ahha u)
  &=
  \gamma\big((u \pmact g)(\sma\cdot n)\big)
  =
 \big(u \umact (\tau_\uhhu
 g(\sma\cdot n))\big) (1 \otimes_\ahha 1)
 \\
 &=
 (\tau_\uhhu
 g(\sma\cdot n)) (1 \otimes_\ahha u)
= \tau_\uhhu
 g (n \otimes_\ahha u),
  \end{split}
  \end{equation*}
for $g \in \Hom_\Aopp(N, M)$,
where in the last step we once again used the naturality of $\tau_{\sma\cdot }$ with respect to the map $R_n \colon  U \to N, \ u \mapsto un$ as above.

(iv):
Given part (iii), here we only have to add that the two notions of stability from Definition \ref{stablepable} \& \ref{chelabertaschen1} imply each other.

Let us first show that the object $(M, \tau) \in \cZ_{\umod^\op}(\umod)$ constructed in (i) actually lies in the full subcategory $\cZ'_{\umod^\op}(\umod)$
if $M$ is  stable in the sense of \eqref{stablehalt} as an aYD contramodule, {\em i.e.}, we have to verify \eqref{subcategory}.
Suppressing the left and right unit constraints for $\mathbb{1} = A$, we have
\begin{equation*}
  \begin{array}{rcl}
(\zeta^{-1} \circ \Hom_\uhhu(A, \tau_\emme) \circ \xi \circ \id_\emme)(m)
    & \overset{\scriptscriptstyle{\eqref{ad1}}}{=} &
(\Hom_\uhhu(A, \tau_\emme) \circ \xi \circ \id_\emme)(1_\ahha)(m \otimes_\ahha 1_\uhhu)
\\
    & \overset{\scriptscriptstyle{\eqref{michigan21}}}{=} &
\gamma\big( (\xi \circ \id_\emme)(1_\ahha)(\sma\cdot m) \big)
\\
& \overset{\scriptscriptstyle{\eqref{ad2}}}{=} &
\gamma\big(\id_\emme(\sma\cdot m) \big)
\\
& \overset{\scriptscriptstyle{\eqref{stablehalt}}}{=} &
m 
  \end{array}
\end{equation*}
for all $m \in M$,
hence $\zeta^{-1} \circ \Hom_\uhhu(A, \tau_\emme) \circ \xi \circ \id_\emme = \id_\emme$,
which was to prove.

Vice versa,
assume that
$(M, \tau) \in \cZ'_{\umod^\op}(\umod)$, that is, $(M, \tau)$ belongs to those objects in the centre for which $\id_\emme \in \Hom_\uhhu(M,M)$ is mapped to itself by the chain of isomorphisms in \eqref{subcategory}. As above, 
the map $R_m \colon  u \mapsto um$ in $\Hom_\Aopp(U,M)$ is a morphism in $\umod$ for any $m \in M$,
and therefore 
\begin{small}
$$
\gamma(\sma{-}m)
= (\tau_\uhhu (R_m))(1 \otimes_\ahha 1)
\stackrel{\scriptscriptstyle{\eqref{omonia}}}{=}
(\tau_\emme \id_\emme )(R_m(1) \otimes_\ahha 1)
=
(\tau_\emme \id_\emme )(m \otimes_\ahha 1)
=
m,
$$
\end{small}
by naturality again, which signifies
the stability of $M$ in the sense of \eqref{stablehalt} with respect to the contraaction. Here, in the last step we used the defining property of $\cZ'_{\umod^\op}(\umod)$ as it explicitly results from the inverses of the adjunctions \eqref{ad1} and \eqref{ad2} in case $P=A$.
\end{proof}

\begin{rem}
  \label{hours2}
  If one desires more structural symmetry and decides to work with the left and right internal Homs that already exist on the bialgebroid level in the spirit of Remark \ref{hours}, then the central structure comes out as 
   \begin{small}
    \begin{equation*}
  \label{michigan213}  
      \begin{array}{rcl}
        \tau_\enne \colon
\Hom_\uhhu(U_\ract \otimes_\ahha N, M)
       &\to&
       \Hom_\uhhu(N \otimes_\ahha \due U \lact {}, M),        
  \\[2pt]
f &\mapsto&
  \big\{
n \otimes_\ahha u \mapsto
  \gamma\big(f(u \otimes_
\ahha \sma\cdot n)\big)
\big\}, 
\\[2pt]
\big\{
\gamma\big((u_{(2)} \pmact \tilde{f})(n \otimes_\ahha \sma\cdot u_{(1)})\big)
\mapsfrom u \otimes_\ahha n 
\big\}
&\mapsfrom& \tilde{f}
      \end{array}
    \end{equation*}
   \end{small}
  for any (stable) aYD contramodule $M$.
 % \end{rem}
%
%\begin{rem}
%  \label{hours2}
 Quite on the contrary, if not only a left Hopf structure but also a right one were present, as also already briefly touched on in Remark \ref{hours}, one would obtain
   \begin{small}
    \begin{equation*}
  \label{michigan212}  
      \begin{array}{rcl}
\tau_\enne \colon   \Hom_\Aopp(N, M)
    &\to&
\Hom_\ahha(N, M),
  \\[2pt]
g &\mapsto& 
\big\{ n \mapsto 
\gamma\big(g(\sma\cdot n)\big)\big\},
  \\[2pt]
  \big\{
  \gamma\big((\sma\cdot  \pmact f)(n)\big)
\mapsfrom n
  \big\}
 &\mapsfrom& f
      \end{array}
    \end{equation*}
   \end{small}
   for the central structure.
However, we will be going on with the more general approach presented in Theorem \ref{tegel1}.
  \end{rem}

\subsection{Traces on $\umod$}
\label{internalflight1}

The bimodule category centre just discussed now leads to a (weak) trace functor on $\umod$ as in Eq.~\eqref{cupper}, which, in turn, allows for a cyclic operator on the cochain complex defining a cyclic cohomology theory for $\Ext$-groups as will be discussed in the subsequent section. Combining the general Lemma \ref{funkyfunk} with Theorem \ref{tegel1}, we immediately obtain:

\begin{theorem}
\label{ganzleerheute}
If a left bialgebroid $(U,A)$ is left Hopf and $(M, \gamma)$ an anti Yetter-Drinfel'd contramodule, then $T \coloneqq  \Hom_\uhhu(-, M)$ yields a weak trace functor $\umod \to \kmod$, which is unital if $M$ is stable (and vice versa).
In particular, 
there are isomorphisms
$$
\tr_{\scriptscriptstyle{N,P}} \colon  \Hom_\uhhu(N \otimes_\ahha P, M) \stackrel\simeq\lra \Hom_\uhhu(P \otimes_\ahha N, M) 
$$
being functorial in  $N, P \in \umod$ that explicitly read
\begin{equation}
\label{hunga}
(\tr_{\scriptscriptstyle{N,P}} f)(p \otimes_\ahha n) = \gamma\big(f(n \otimes_\ahha \sma\cdot p)\big),
\end{equation}
for $n \in N, p \in P$.
\end{theorem}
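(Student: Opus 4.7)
The plan is to obtain this theorem essentially for free by combining Lemma~\ref{funkyfunk} with Theorem~\ref{tegel1}, and then to verify by a short diagram chase that the abstract trace formula \eqref{cupper} specialises to the explicit expression \eqref{hunga}.

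\smallskip

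\textbf{Abstract part.} Under the standing hypothesis that $(U,A)$ is left Hopf, Lemma~\ref{keineDokumente}(iii) guarantees that $\umod$ is biclosed monoidal, so Lemma~\ref{funkyfunk} applies directly: every $(M,\tau) \in \cZ_{\umod^\op}(\umod)$ yields a weak trace functor $T = \Hom_\uhhu(-,M)\colon \umod \to \kmod$ defined through \eqref{cupper}, and $T$ is unital precisely when $M$ belongs to the stable subcategory $\cZ'_{\umod^\op}(\umod)$. Composing with the categorical equivalences $\cayd \simeq \cZ_{\umod^\op}(\umod)$ and $\csayd \simeq \cZ'_{\umod^\op}(\umod)$ established in Theorem~\ref{tegel1}(iii)--(iv), any aYD contramodule $(M,\gamma)$ corresponds to a central object whose central structure is exactly \eqref{michigan21}, and stability in the sense of \eqref{stablehalt} translates precisely into the unitality condition \eqref{unital}.

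\smallskip

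\textbf{Explicit formula.} To recover \eqref{hunga}, one unravels $\tr_{N,P} = \zeta^{-1} \circ \Hom_\uhhu(N,\tau_P) \circ \xi$ in three steps using the adjunctions \eqref{ad1} and \eqref{ad2}: first, $\xi$ sends $f \in \Hom_\uhhu(N \otimes_\ahha P, M)$ to $n \mapsto f(n \otimes_\ahha -)$; next, postcomposition with $\tau_P$ produces $n \mapsto \tau_P\bigl(f(n \otimes_\ahha -)\bigr)$; finally, $\zeta^{-1}$ amounts to evaluation at $u = 1_\uhhu$, giving
\begin{equation*}
(\tr_{N,P} f)(p \otimes_\ahha n) = \tau_P\bigl(f(n \otimes_\ahha -)\bigr)(p \otimes_\ahha 1_\uhhu).
\end{equation*}
Plugging in the explicit central structure \eqref{michigan21} and using the trivial identity $1_+ \otimes_\Aopp 1_- = 1 \otimes_\Aopp 1$ for the translation map at $u = 1_\uhhu$ immediately yields $\gamma\bigl(f(n \otimes_\ahha \sma\cdot\, p)\bigr)$, which is the desired formula.

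\smallskip

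\textbf{Where the substance lies.} All the hard work has been absorbed upstream: Theorem~\ref{tegel1} supplies the hexagon identity for $\tau$ and the bijection between aYD contramodules and central objects, while Lemma~\ref{funkyfunk} performs the abstract passage between central structures and weak trace functors. The only remaining task in this final assembly is the short diagram chase above together with the unit identity for the translation map, so I do not anticipate any genuine obstacle at this stage.
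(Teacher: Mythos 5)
Your proposal is correct and follows essentially the same route as the paper: invoke Lemma \ref{funkyfunk} together with Theorem \ref{tegel1} for the abstract statement (including the unitality/stability equivalence), then unwind $\zeta^{-1}\circ\Hom_\uhhu(N,\tau_\pehhe)\circ\xi$ via the adjunctions \eqref{ad1}, \eqref{ad2} and the central structure \eqref{michigan21} at $u=1_\uhhu$ to get \eqref{hunga}. The paper additionally records a one-line direct check of unitality from the explicit formula (using $U$-linearity of $f$ and \eqref{stablehalt}), but your abstract derivation of unitality from Theorem \ref{tegel1}(iv) is equally valid.
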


\begin{proof}
  This essentially follows, as already anticipated, directly from  Lemma \ref{funkyfunk} along with Theorem \ref{tegel1}.
  One thing left to show is that in this context the general trace maps \eqref{cupper} assume the explicit form \eqref{hunga}.
%  
%% With the help of Theorem \ref{tegel1}, Lemma \ref{keineDokumente}, and Corollary \ref{wasasesam1}, let us show that the diagram
%% $$
%%  \xymatrix{
%% \Hom_\uhhu(N \otimes_\ahha P, M)  \ar[rr]^-{\zeta}\ar[d]_{\tr_{N,P}} 
%% && \Hom_\uhhu(P, \hom^\ell(N, M)) \ar[d]^{\Hom_\uhhu(P,\tau_\enne)} \\
%%  \Hom_\uhhu(P \otimes_\ahha N, M) && \Hom_\uhhu(P, \hom^r(N, M)), \ar[ll]_{\xi^{-1}}
%% }
%%  $$
%%  commutes, %if we only showed that $\tr_{\scriptscriptstyle{N,P}}$ fits into it at the dotted arrow,
%%  {\em i.e.}, let us show that $\tr_{\scriptscriptstyle{N,P}} = \xi^{-1} \circ \Hom_\uhhu(P,\tau_\enne) \circ \zeta$. 
 Indeed, %for $f \in \Hom_\uhhu(N \otimes_\ahha P, M)$, we have
\begin{equation*}
\begin{array}{rcl}
(\zeta^{-1} \circ \Hom_\uhhu(N,\tau_\pehhe) \circ \xi \circ f)(p \otimes_\ahha n)
  &
 \stackrel{\scriptscriptstyle{\eqref{ad1}}}{=}
  &
 (\Hom_\uhhu(N,\tau_\pehhe) \circ \xi \circ f)(n)(p \otimes_\ahha 1)
 \\
  &
 \stackrel{\scriptscriptstyle{\eqref{michigan21}}}{=}
  &
 \gamma\big((\xi f)(n)({\sma -}p)\big)
 \\
  &
 \stackrel{\scriptscriptstyle{\eqref{ad1}}}{=}
  &
 \gamma\big( f(n \otimes_\ahha {\sma -}p)\big)
% \\
%  &
% \stackrel{\scriptscriptstyle{\eqref{hunga}}}{=}
%  &
%  ( \tr_{\scriptscriptstyle{N,P}} f)(p \otimes_\ahha n). 
\end{array}
\end{equation*}
for any $f \in \Hom_\uhhu(N \otimes_\ahha P, M)$, that is, \eqref{hunga} indeed.
As for the unitality of the trace functor in case $M$ is stable, setting $N = A$ we directly see that
$$
(\tr_{\scriptscriptstyle A,P}f)(p) = \gamma\big(f(\sma\cdot p) \big)
= \gamma\big(\sma\cdot f(p)\big) = f(p),
$$
using the $U$-linearity of $f$ and the stability of $M$.
%
%All remaining properties of a trace functor in Definition \ref{kaledin} now directly follow from those of the central structure $\tau$; for example, Eq.~\eqref{trace} can be proven via the hexagon axiom \eqref{tarrega81}.
  \end{proof}

%% \begin{rem}
%% \label{centreencore}
%%   We are now in a position to complete, with more ease, the proof of part (iii) of Theorem \ref{tegel1}, that is, that the central object $(M, \tau)$ constructed in its first part actually lives in the subcategory $\cZ'_{\cC^\op}(\cC)$: in view of Theorem \ref{ganzleerheute}, this is a simple consequence of the unitality $\tr_{\scriptscriptstyle A,P} = \id$ of the trace.
%%   \end{rem}

\subsection{Cyclic structures on $\Ext$ and cyclic cohomology}
\label{shine}

In \cite[\S3.2]{Kow:ANCCOTCDOE},
we defined the structure of a cocyclic $k$-module
on the cochain complex computing $\Ext^\bullet_\uhhu(A,M)$,
where $M$ is, to begin with, a left $U$-module right $U$-contramodule with contraaction $\gamma$: that is, we added a {\em cocyclic operator} $\uptau$ compatible with the simplicial structure inducing the cochain complex. This way, if $M$ is a stable aYD contramodule, one obtains a cyclic coboundary
$$
B \colon  \Ext^\bullet_\uhhu(A,M) \to \Ext^{\bullet-1}_\uhhu(A,M)
$$
that squares to zero
(see, for example, \cite[\S\S2.5 \& 6.1]{Lod:CH} or \cite[\S5]{Tsy:CH} for general details on (co)\-cyc\-lic $k$-modules).

In this subsection, we want to show that the trace functor $T$ from Theorem \ref{ganzleerheute} resp.\ the map $\tr$ in \eqref{hunga} induce the same cocyclic operator that was obtained in \cite[Eq.~(3.10)]{Kow:ANCCOTCDOE}, hence induce the same cyclic cohomology for the complex computing $\Ext^\bullet_\uhhu(A,M)$.

  Let us assume that $U_\ract$ is flat as an $A$-module. In this case,
  $$
\Ext^\bullet_\uhhu(A, M) = H(\Hom_\uhhu({\rm Bar}_\bullet(U),M), b'),
  $$
where $ {\rm Bar}_\bullet(U) = (\due U \blact \ract)^{\otimes_\Aopp \bullet+1}$ with differential $b'$ 
is the {\em bar resolution} of $A$ (essentially defined by the multiplication in $U$ and with augmentation given by the counit $\gve$), and which is a left $U$-module by left multiplication on the first tensor factor. Elements in tensor powers over $\Aop$ will typically be denoted by the comma notation, that is, an elementary tensor in $U^{\otimes_\Aopp q}$ by $(u^1, \ldots, u^{q})$.

Applying for any $q \in \N$ the isomorphism
\begin{small}
  \begin{equation}
    \label{theta}
 \begin{array}{rcl}
   \gt \colon  \Hom_\uhhu({\rm Bar}_q(U),M) &\to& \Hom_\Aopp(U^{\otimes_\Aopp q}, M),
   \\
   g &\mapsto& g(1,\cdot),
   \\
  \sma\cdot f &\mapsfrom& f,
\end{array}
\end{equation}
\end{small}
where we denoted the left $U$-action on $M$ by juxtaposition,
we obtain that the $\Ext$-groups can equally be computed by the complex
\begin{equation*}
  \label{mostaccioli}
C^\bullet(U,M)  \coloneqq  \Hom_\Aopp(U^{\otimes_\Aopp\bullet}, M),
\end{equation*}
%where as above the tensor products are taken with respect to the $A$-bimodule structure $\due U \blact \ract$, and
where the cofaces and codegeneracies in degree $q \in \N$ are explicitly given as
\begin{small}
\begin{equation*}
 \begin{array}{rcl}
  (\gd_i f)(u^1, \ldots, u^{q+1}) \!\!\!\!
  &=\!\!\!\!&
  \left\{\!\!\!
\begin{array}{l} 
u^1 f(u^2, \ldots, u^{q+1})
\\ 
 f(u^1, \ldots, u^{i} u^{i+1}, \ldots, u^{q+1})
\\
 f(u^1, \ldots, \gve(u^{q+1}) \blact u^q)
\end{array}\right.  
  \begin{array}{l} \mbox{if} \ i=0, \\ \mbox{if} \
  1 \leq i \leq q, \\ \mbox{if} \ i = q + 1,  \end{array} 
  \\[.5cm]
(\gs_j f)(u^1, \ldots, u^{q-1}) \!\!\!\! 
&=\!\!\!\!& f(u^1, \ldots, u^{j}, 1, u^{j+1}, \ldots, u^{q-1}) \quad \mbox{for} \  0 \leq j \leq q-1.
  \\[4pt]
\end{array}
\end{equation*}
\end{small}
By means of the cocyclic operator in the form
  \begin{small}
\begin{equation}
  \label{nadennwommama}
\begin{array}{rcl}
  (\uptau f)(u^1, \ldots, u^q) \!\!\!\!&=\!\!\!\!& \gamma\big(((u^1_{(2)} \cdots u^{q-1}_{(2)} u^q) \pmact  
  f)(-, u^1_{(1)}, \ldots, u^{q-1}_{(1)}) \big),
\end{array}
\end{equation}
\end{small}
  this becomes a cocyclic $k$-module in the sense of \cite[\S2.5]{Lod:CH}.

  To see that this cocyclic operator can indeed be considered as originating from a trace functor, we first have to lift it to
  $
\Hom_\uhhu({\rm Bar}_\bullet(U),M)
$
by the isomorphism \eqref{theta} in order to place it in the realm of $U$-linear maps. Secondly, the tensor products appearing in ${\rm Bar}_\bullet(U)$ are not the monoidal products in $\umod$, which would be needed in \eqref{hunga}; hence, another two $k$-module isomorphisms $\eta$ and $\chi$ are required. More precisely:

\begin{theorem}
  \label{lalacrimosa}
  Let the left bialgebroid $(U,A)$ be left Hopf and $M$ a stable anti Yetter-Drinfel'd contramodule. Then the diagram
%\begin{small}
\begin{equation}
\label{cyclictrace}
\xymatrix{
  \Hom_\Aopp(U^{\otimes_\Aopp \bullet}, M)
  \ar[rr]^{\uptau}
\ar[d]_{\gt^{-1}}
& &
  \Hom_\Aopp(U^{\otimes_\Aopp \bullet}, M)
\ar@{<-}[d]^{\, \gt}
&
  \\
   \Hom_\uhhu({\rm Bar}_\bullet(U), M)
  \ar[d]_{\eta^{-1}}
  & &
  \Hom_\uhhu({\rm Bar}_\bullet(U), M)
  \ar@{<-}[d]^{\chi}
  \\
  \Hom_\uhhu(U \otimes_\ahha U^{\otimes_\Aopp \bullet}, M)
  \ar[rr]_{\tr}
  & &
    \Hom_\uhhu(U^{\otimes_\Aopp \bullet} \otimes_\ahha U, M)
  }
\end{equation}
%\end{small}
  commutes in any degree.
  \end{theorem}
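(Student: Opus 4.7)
The plan is to verify commutativity of \eqref{cyclictrace} by chasing an arbitrary element $f \in \Hom_\Aopp(U^{\otimes_\Aopp q}, M)$ through the two paths and matching the results with the explicit formula \eqref{nadennwommama}. Since both the trace $\tr$ from Theorem \ref{ganzleerheute} and the cocyclic operator $\uptau$ are given by closed-form expressions, and since $\gt, \gt^{-1}$ are known by \eqref{theta}, the task reduces to identifying the auxiliary maps $\eta$ and $\chi$ and then performing a finite simplification.

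First I would make $\eta$ and $\chi$ explicit. The bar complex ${\rm Bar}_q(U) = U^{\otimes_\Aopp q+1}$ is balanced over $\Aop$ via the $\blact/\bract$ structures, whereas the monoidal tensor $\otimes_\ahha$ in $\umod$ uses $\lact/\ract$ together with the diagonal $U$-action \eqref{mancino}. The translation between the two is governed by the Hopf-Galois map, {\em i.e.}, by the Sweedler shorthand $u_+ \otimes_\Aopp u_-$ from \S\ref{schonweniger}. Concretely, $\eta$ is the $U$-module isomorphism that rewrites a diagonal action on the first slot into a left-multiplication action by means of $\ga_\ell^{-1}$, with $\chi$ playing the analogous role at the opposite end of the tensor string (placing the cyclically permuted $U$-factor in the last position); in both cases the underlying identification at the level of tensor products is uniquely pinned down by demanding $U$-linearity of the induced map on Hom-spaces.

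Second, I would compose along the bottom-right path. Starting from $f$, the map $\gt^{-1}$ yields $\sma\cdot f \colon (u^0, \ldots, u^q) \mapsto u^0 f(u^1, \ldots, u^q)$; pushing through $\eta^{-1}$ converts the first $\otimes_\Aopp$ slot into an $\otimes_\ahha$ slot via a coproduct; the trace formula \eqref{hunga} transports the $U$-factor from the front to the back at the cost of a $\gamma$ and an $\sma\cdot$ re-insertion, so the output is controlled by $\gamma\big((\text{transported map})(\text{diagonal action})\big)$; applying $\chi$ and then $\gt$ finally evaluates at $(1, u^1, \ldots, u^q)$. Using the Hopf-Galois identities \eqref{Sch2}, \eqref{Sch5} and \eqref{Sch9}, the $U$-linearity of $\sma\cdot f$, the definition \eqref{pmact} of $\pmact$, and the aYD compatibility \eqref{nawas1}, the resulting expression should telescope into precisely \eqref{nadennwommama}.

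The main obstacle will be the bookkeeping of Sweedler indices: one must carefully track which copy of each $u^i$ receives a coproduct coming from the diagonal action inside $\eta^{-1}$ and $\chi$, which copy is rewritten via the $(-)_+, (-)_-$ translation, and how these two layers of indices eventually collapse onto the single running coproduct $u^i_{(1)}, u^i_{(2)}$ visible in \eqref{nadennwommama}. I expect stability of $M$ to be precisely what is needed to absorb the $\sma\cdot 1$ factor appearing at the front of the computation (in the same spirit as the unitality argument at the end of the proof of Theorem \ref{ganzleerheute}); without stability, the identical argument would still produce the corresponding para-cocyclic operator, matching the weak-versus-unital trace dichotomy discussed earlier.
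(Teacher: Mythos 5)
Your plan coincides with the paper's proof: the paper introduces $\eta$ and $\chi$ exactly as you describe (the coproduct in one direction, the $(-)_+\otimes_{\Aopp}(-)_-$ translation in the other, with $\chi$ applying coproducts to \emph{all} $q$ factors and collecting their second legs onto the last slot — a form that is indeed forced by $U$-equivariance), and then chases $f$ through $\gt\circ\chi\circ\tr\circ\eta^{-1}\circ\gt^{-1}$ using \eqref{theta}, \eqref{chi}, \eqref{hunga}, \eqref{eta}, and \eqref{pmact} to land on \eqref{nadennwommama}. One small correction: stability is never invoked in this chase — the $1$ in the first slot is simply replaced by the placeholder of the $\pmact$-action rather than absorbed via $\gamma(\sma{-}m)=m$ — so the diagram commutes for any aYD contramodule, with stability entering only to make $\tr$ unital and $\uptau$ genuinely cocyclic, exactly as your closing remark anticipates.
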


\begin{proof}
  Explicitly, the two $k$-module isomorphisms $\eta$ and $\chi$ are given as follows: %let $N \in \umod$ be a left $U$-module and
  define for any $q \in \N$
  \begin{small}
  \begin{eqnarray}
\nonumber
    \eta \colon 
    \Hom_\uhhu(U \otimes_\ahha (U^{\otimes_\Aopp q}), M) \!\!\!&\to\!\!\!& \Hom_\uhhu(U^{\otimes_\Aopp q+1}, M),
    \\
        \label{eta}
f \mapsto \big\{\hspace*{.5pt} (v, u^1, \ldots, u^q ) \!\!\!&\mapsto\!\!\!&  f(v_{(1)} \otimes_\ahha (v_{(2)} u^1, u^2, \ldots, u^q)\hspace*{.5pt}\big\},
\\
\nonumber
\big\{\hspace*{.5pt}g(v_+, v_- u^1,  u^2, \ldots  u^q) \!\!\!&\mapsfrom\!\!\!& \big( v \otimes_\ahha (u^1, \ldots, u^q)\big) \hspace*{.5pt} \big\} \mapsfrom g,
    \end{eqnarray}
\end{small}
  as well as
  \begin{small}
  \begin{eqnarray}
\nonumber
    \chi \colon 
    \Hom_\uhhu((U^{\otimes_\Aopp q}) \otimes_A U, M) \!\!\!&\to\!\!\!& \Hom_\uhhu(U^{\otimes_\Aopp q+1}, M),
    \\
        \label{chi}
f \mapsto \big\{\hspace*{.5pt}(u^1, \ldots, u^q , v) \!\!\!&\mapsto\!\!\!&  f\big((u^1_{(1)}, \ldots, u^q_{(1)}) \otimes_\ahha u^1_{(2)} \cdots u^q_{(2)}  v \big)\hspace*{.5pt}\big\},
\\
\nonumber
\big\{ \hspace*{.5pt} g(u^1_+, \ldots, u^q_+ ,  u^q_- \cdots  u^1_- v) \!\!\!&\mapsfrom\!\!\!& ((u^1, \ldots, u^q) \otimes_\ahha v\big) \hspace*{.5pt} \big\} \mapsfrom g,
    \end{eqnarray}
  \end{small}
  where on the left hand side  $(U^{\otimes_\Aopp q}) \otimes_A U$ is seen as left $U$-module via
  $
  w\big((u^1, \ldots, u^q) \otimes_A v\big)  \coloneqq 
  (w_{(1)} u^1, u^2, \ldots, u^q) \otimes_A w_{(2)} v,
$
  which is well-defined if the tensor product over $A$ relates the last tensor factor with the first by left multiplication with the target map. It is a straightforward check that the maps $\eta$ resp.\ $\chi$ are well-defined and that their asserted inverses invert them, indeed. We can then
  compute, for any $f \in  \Hom_\Aopp(U^{\otimes_\Aopp q}, M)$,
\begin{small}
\begin{equation*}
  \begin{array}{rcl}
    &&
    (\gt \circ \chi \circ \tr \circ \eta^{-1} \circ \gt^{-1} \circ f)(u^1, \ldots, u^q)
    \\
  &
 \stackrel{\scriptscriptstyle{\eqref{theta}}}{=}
  &
    (\chi \circ \tr \circ \eta^{-1} \circ \gt^{-1} \circ f)(1, u^1, \ldots, u^q)
 \\
  &
 \stackrel{\scriptscriptstyle{\eqref{chi}}}{=}
  &
    \big(\tr \circ \eta^{-1} \circ \gt^{-1} \circ f\big)\big((1, u^1_{(1)}, \ldots, u^{q-1}_{(1)}) \otimes_\ahha u^1_{(2)} \cdots u^{q-1}_{(2)}  u^q \big)
 \\
  &
 \stackrel{\scriptscriptstyle{\eqref{hunga}}}{=}
  &
 \gamma\pig( \big(\eta^{-1} \circ \gt^{-1} \circ f\big)\big(u^1_{(2)} \cdots u^{q-1}_{(2)}  u^q \otimes_\ahha (\sma\cdot , u^1_{(1)}, \ldots, u^{q-1}_{(1)})  \big) \pig)
 \\
  &
 \stackrel{\scriptscriptstyle{\eqref{eta}}}{=}
  &
 \gamma\pig( \big(\gt^{-1} \circ f\big)\big((u^1_{(2)} \cdots u^{q-1}_{(2)}  u^q)_+, (u^1_{(2)} \cdots u^{q-1}_{(2)}  u^q)_- \sma\cdot , u^1_{(1)}, \ldots, u^{q-1}_{(1)}  \big) \pig)
  \\
  &
 \stackrel{\scriptscriptstyle{\eqref{theta}}}{=}
  &
 \gamma\pig((u^1_{(2)} \cdots u^{q-1}_{(2)}  u^q)_+ f\big((u^1_{(2)} \cdots u^{q-1}_{(2)}  u^q)_- \sma\cdot , u^1_{(1)}, \ldots, u^{q-1}_{(1)}  \big) \pig)
  \\
  &
 \stackrel{\scriptscriptstyle{\eqref{pmact}}}{=}
  &
 \gamma\big(((u^1_{(2)} \cdots u^{q-1}_{(2)}  u^q) \pmact f)(-, u^1_{(1)}, \ldots, u^{q-1}_{(1)}) \big)
  \\
  &
 \stackrel{\scriptscriptstyle{\eqref{nadennwommama}}}{=}
  &
(\uptau f)(u^1, \ldots, u^{q}),
\end{array}
\end{equation*}
\end{small}
which means that diagram \eqref{cyclictrace} commutes and hence implies
that the co\-cyclic operator \eqref{nadennwommama} is induced by the trace functor from Theorem \ref{ganzleerheute}.
  \end{proof}

\begin{rem}
  \label{quelquechose}
  As already mentioned in Remark \ref{yd}, if $M$ is an aYD contra\-module and, say,
  $Q$ a Yetter-Drinfel'd module ({\em i.e.}, an element in the centre of $\umod$ if seen as a bimodule category over itself via the monoidal product),
  then $\hom^r(Q, M) = \Hom_\Aopp(Q,M)$ is again an aYD contramodule. Hence, if this aYD contramodule is stable (which is not equivalent to $M$ being stable),
  by once more exploiting the Hom-tensor adjunction
  $
\xi \colon   \Hom_\uhhu(P \otimes_\ahha N \otimes_\ahha Q, M)
  \simeq 
\Hom_\uhhu(P \otimes_\ahha N, \hom^r(Q, M))
% = \Hom_\uhhu(P \otimes_\ahha N, \Hom_\Aopp(Q, M))
  $, 
  it is possible to construct a trace functor
  $$
  T  \coloneqq  \Hom_\uhhu(- \otimes_A Q, M),
$$
  with $M$ and $Q$ as above, and corresponding trace map
  $$
\tr_{\scriptscriptstyle{N,P}} \colon  \Hom_\uhhu(N \otimes_\ahha P \otimes_\ahha Q, M) \stackrel\simeq\lra \Hom_\uhhu(P \otimes_\ahha N \otimes_\ahha Q, M), 
  $$
for arbitrary $N, P \in \umod$, which 
in the same way as in Theorem \ref{lalacrimosa} leads to the structure of a cyclic $k$-module on the complex computing $\Ext^\bullet_\uhhu(Q,M)$ if $U_\ract$ is $A$-flat. Since this produces
  even more unpleasant formul\ae \ than those seen so far \cite[Prop.~3.5]{Kow:ANCCOTCDOE}, we refrain from spelling out the details here.
\end{rem}

\section{Centres and anti Yetter-Drinfel'd modules}
\label{wartung}
We now, in a sense, dualise most of the ideas and results of the preceding section and dedicate our attention to the category of bialgebroid comodules.

\subsection{Comodules over bialgebroids}
\label{suppe}
A left (and analogously right) comodule over a left bialgebroid $(U,A)$ is simply a comodule over the appurtenant $A$-coring, see \cite[\S3]{BrzWis:CAC}: that is, a left $A$-module $M$ equipped with a coassociative and counital coaction
$\gl \colon  M \to U_\ract \otimes_\ahha M, \ m \mapsto m_{(-1)} \otimes_\ahha m_{(0)}$.
Defining $ ma \coloneqq  \gve(m_{(-1)} \bract a) m_{(0)} = \gve(a \blact m_{(-1)}) m_{(0)}$ for all $a \in A$ equips $M$ with a right $A$-action as well, and with respect to the resulting $A$-bimodule structure the coaction is $A$-bilinear in the sense of
\begin{equation}
  \label{taklinco}
\gl(amb) =  a \lact m_{(-1)} \bract b \otimes_\ahha m_{(0)}, \qquad a,b \in A.
  \end{equation}
%for all $m \in M$, $a, b \in A$.
On the other hand, 
by virtue of the bialgebroid properties, we have
\begin{small}
  \begin{equation*}
    \begin{split}
m_{(-1)} \otimes_\ahha m_{(0)}a
&=
m_{(-2)} \otimes_\ahha \gve(a \blact m_{(-1)})m_{(0)}
\\
&
=
m_{(-2)} \ract \gve(a \blact m_{(-1)}) \otimes_\ahha m_{(0)}
=
a \blact m_{(-1)} \otimes_\ahha m_{(0)},
    \end{split}
    \end{equation*}
\end{small}
so that the coaction effectively $\gl$ corestricts to a map
\begin{equation}
  \label{takcom}
\gl \colon  M \to U_\ract \times_\ahha M,
  \end{equation}
where the subspace $U \times_\ahha M \subset U \otimes_\ahha M$ is defined 
as
\begin{small}
\begin{equation}
    \label{takcom2}
   U_\ract \times_{\scriptscriptstyle A} M    \coloneqq 
   \big\{ {\textstyle \sum_i} u_i \otimes  m_i \in U_\ract  \otimes_\ahha M
    \mid {\textstyle \sum_i} a \blact u_i \otimes m_i = {\textstyle \sum_i} u_i \otimes m_ia,  \ \forall a \in A  \big\},
\end{equation}
\end{small}
see \cite{Tak:GOAOAA} for more information on the (lax monoidal) product $\times_\ahha$.
%
%The category $\ucomod$ (resp.~$\comodu$) of left (resp.~right) $U$-comodules is (strict) monoidal.
%
The category $\ucomod$ of left $U$-comodules is (strict) monoidal.

Analogous considerations hold for the category $\comodu$ of right $U$-co\-mod\-ules with respect to which we only explicitly state the $A$-bilinearity of a right coaction
$\rho \colon  M \to M \times_\ahha \due U \lact {}$, which reads
\begin{equation}
  \label{taklinco2}
\rho(amb) =  m_{(0)} \otimes_\ahha a \blact m_{(1)} \ract b, \qquad a,b \in A.
  \end{equation}
\subsubsection{A functor between comodule categories}
\label{paco}
The standard Hopf algebraic way of transforming a left $U$-comodule into a right one via the antipode or its possible inverse does not apply here (as there is no antipode, not even if $U$ is left or right Hopf)
but nevertheless if the left bialgebroid $(U, A)$ is right Hopf and $\due U \lact {}$ is $A$-projective, there is a strict monoidal functor $\ucomod \to \comodu$, as shown originally in \cite{Phu:TKDFHA} and later, somewhat enhanced, in \cite[Thm.~4.1.1]{CheGavKow:DFOLHA}. More concretely, given a left $U$-comodule $M$, the map
%$
%M \to M \otimes_\ahha \due U \lact {}, \ m \mapsto m_{[0]} \otimes_\ahha m_{[1]}
%$
\begin{equation}
\label{soestrene}
M \to M \otimes_\ahha \due U \lact {}, \quad m \mapsto m_{[0]} \otimes_\ahha m_{[1]}  \coloneqq  \gve(m_{(-1)[+]} ) m_{(0)} \otimes_\ahha m_{(-1)[-]}
  \end{equation}
is a {\em right} coaction.
We refer to the proof of
\cite[Thm.~4.1.1]{CheGavKow:DFOLHA}
for the not entirely obvious verification that
if $\due U \lact {}$ is $A$-projective, then 
this is a well-defined operation.
We reserve the square bracket Sweedler notation $m \mapsto m_{[0]} \otimes_\ahha m_{[1]}$ throughout the entire text for {\em this} kind of right $U$-coaction {\em only}, starting from a left $U$-comodule.

Vice versa, if the left bialgebroid $(U, A)$ is {\em left} Hopf and
$U_\ract$ is $A$-projective, then there is a strict monoidal functor $\comodu \to \ucomod$ but we are not going to need this fact in the sequel.

\begin{rem}
  \label{curry}
  In case of a Hopf algebra, as follows from Eqs.~\eqref{sesam}, the above functor
  $\ucomod \to \comodu$
  is precisely the one induced by the inverse $S^{-1}$ of the antipode, whereas
  $\comodu \to \ucomod$ is induced by $S$.
  However, in striking contrast to the Hopf algebra case where it essentially does not matter whether one uses $S$ or $S^{-1}$ for either of the functors,
for a left bialgebroid
  there does not seem to be an obvious way to obtain a functor $\ucomod \to \comodu$ in case $(U,A)$ is {\em left} Hopf instead of {\em right} Hopf. This defect will become very visible when defining the left and right internal Homs in $\ucomod$.
  \end{rem}

For later and frequent use in technical computations,
%in the spirit of the compatibility relations \eqref{mampf1}--\eqref{mampf3},
for a left $U$-comodule $M$ over a left bialgebroid that is, in addition, right Hopf, one easily verifies by \eqref{taklinco} and \eqref{Tch4} that for any $m \in M$ the compatibility condition
\begin{equation}
  \label{blumare1}
(m_{[0](-1)} \otimes_\ahha m_{[0](0)}) \otimes_\ahha m_{[1]} = (m_{(-1)[+]} \otimes_\ahha m_{(0)}) \otimes_\ahha m_{(-1)[-]}
  \end{equation}
holds between left $U$-coaction and induced right $U$-coaction \eqref{soestrene} as tensor products in $(U_\ract \otimes_\ahha M)_\bract \otimes_\ahha \due U \lact {}\!$, where $(U \otimes_\ahha M)_\bract = U_\bract \otimes_\ahha M$.
If the left bialgebroid $(U,A)$ is both left and right Hopf, by \eqref{mampf3} one even has
\begin{equation}
  \label{blumare2}
m_{(-1)} \otimes_\ahha (m_{(0)[0]} \otimes_\ahha m_{(0)[1]}) = m_{[1]-} \otimes_\ahha (m_{[0]} \otimes_\ahha m_{[1]+}) 
\end{equation}
as 
tensor products in $U_\ract \otimes_\ahha \due {(M \otimes_\ahha \due U \lact {})} \blact {}\!$, where $\due {(M \otimes_\ahha U)} \blact {} = M \otimes_\ahha \due U \blact {}\!$.

\subsubsection{Anti Yetter-Drinfel'd modules}

In the previous sections, we added to objects in the monoidal category $\umod$ an additional structure (of right $U$-contraaction) compatible with the action, which led to the notion of aYD contramodules. Now, the monoidal category of interest is $\ucomod$ and the additional structure will be that of a {\em right} $U$-action. Note that the category $\modu$ of right $U$-modules over a left bialgebroid is not monoidal; nonetheless, one still has a forgetful functor $\modu \to \amoda$, with respect to which we denote the induced $A$-bimodule structure on a right $U$-module $M$ by
\begin{equation}
  \label{peinture2}
  a \blact m \bract b  \coloneqq  mt(a)s(b)
  \end{equation}
for $m \in M$, $a, b \in A$. Moreover, $\modu$ can be seen as a right module category over $\umod$ by means of 
$
\modu \times \umod \to \modu, \ (M, N) \mapsto M \otimes_\ahha N,
$
induced by the action on elements
\begin{equation}
  \label{wszw1}
  (m \otimes_\ahha n)
\mpsqact
  u
   \coloneqq 
  m u_{[+]} \otimes_\ahha u_{[-]} n, %\qquad m \in M, n \in N, u \in U.   
\end{equation}
for $m \in M, n \in N$, and $u \in U$.   
%where $M \in \modu$ and $N \in \umod$.

Analogous comments apply to the category of anti Yetter-Drinfel'd modules, which we are going to recall next \cite{BoeSte:CCOBAAVC, JarSte:HCHARCHOHGE} and which arise when
asking for compatibility between right $U$-action and left $U$-coaction.

\begin{dfn}
\label{chelabertaschen2}
An {\em anti Yetter-Drinfel'd (aYD) module} $M$ over a left Hopf algebroid is simultaneously a left $U$-comodule and {\em right} $U$-module (with action denoted by juxtaposition) such that both underlying $A$-bimodule structures from \eqref{peinture2} and \eqref{taklinco} coincide, 
%% %that is,
%% %\begin{equation}
%% %\label{romaedintorni}
%% %a \lact m \ract b = amb, \qquad m \in M, \ a,b \in A,
%% %\end{equation}
 and such that action followed by coaction results into
 \begin{equation}
\label{funkydesertbreaks1}
 (mu)_{(-1)} \otimes_\ahha (mu)_{(0)} = u_- m_{(-1)} u_{+(1)} \otimes_\ahha m_{(0)} u_{+(2)}, \qquad u \in U, m \in M.
 \end{equation}
%% % for all $u \in U, \ f \in \Hom_\Aopp(U,M)$. 
 An anti  Yetter-Drinfel'd contramodule is called {\em stable} if 
%% %\begin{equation}
%% %\label{stablehalt17}
 $
 m = m_{(0)} m_{(-1)}.
 $
%% %\end{equation}
%% for all $m \in M$
\end{dfn}

The category $\ayd$ of aYD modules (resp.~the category $\sayd$ of stable ones) is not monoidal as already the category of right $U$-modules is not so.

For later use, we want to state some alternative compatibility conditions in presence of more structure:
if the left bialgebroid $(U, A)$ is not only left Hopf but also right Hopf, the aYD condition \eqref{funkydesertbreaks1} is equivalent to
  \begin{equation}
   \label{funkydesertbreaks1a}
(mu_{[+]})_{(-1)} u_{[-]} \otimes_\ahha (mu_{[+]})_{(0)} = u_- m_{(-1)} \otimes_\ahha m_{(0)} u_+,
  \end{equation}
as an easy check using \eqref{mampf1} and \eqref{Tch2} reveals. In this case, Eq.~\eqref{funkydesertbreaks1} can also be reformulated with respect to the right $U$-coaction \eqref{soestrene}, that is,
\begin{equation}
  \label{funkydesertbreaks2}
  (mu)_{[0]}
  \otimes_\ahha (mu)_{[1]}
  =
  m_{[0]} u_{[+](1)} \otimes_\ahha u_{[-]} m_{[1]} u_{[+](2)}, 
\end{equation}
as one obtains (after a while) applying to \eqref{funkydesertbreaks1}, in this order, Eqs.~\eqref{soestrene}, \eqref{Tch9}, \eqref{takcom}, \eqref{Tch4}, \eqref{peinture2}, \eqref{mampf1}, \eqref{mampf2}, \eqref{Sch1}, and finally \eqref{Sch8}, along with the properties of a bialgebroid counit.  Moreover,
if $M$ is stable with respect to its left coaction,
that is,
  $m_{(0)} m_{(-1)} = m$,
  then it is so with respect to its right coaction \eqref{soestrene} as well, by which we mean
\begin{small}
  \begin{equation}
  \label{stableagain}
  m_{[0]} m_{[1]} = m_{[0](0)} m_{[1](-1)} m_{[1]} =  m_{(0)} m_{(-1)[+]} m_{(-1)[-]} =
  \gve(m_{(-1)}) \blact m_{(0)} = m,
  \end{equation}
  \end{small}
as results from \eqref{blumare1} and \eqref{Tch7}; and vice versa.

\subsection{Left and right closedness of $\ucomod$}

As said before, for a monoidal category being closed or even biclosed means the existence of internal Homs. In case of comodules, this leads to the notion of {\em rational} morphisms as introduced by Ulbrich \cite{Ulb:SPACOLM}, see also \cite{CaeGue:OTCORHM, SteOys:TWMTFCA} for more information on the subject in the realm of Hopf algebras. We adapt the idea to the bialgebroid case here.

\subsubsection{Right internal Homs in $\ucomod$}
\label{sahale}
Let $(U,A)$ be a left algebroid, $P$ be a right $U$-comodule with right coaction $p \mapsto p_{(0)} \otimes_\ahha p_{(1)}$ and $M$ a left $U$-comodule with left coaction $m \mapsto m_{(-1)} \otimes_\ahha m_{(0)}$, in the sense of \S\ref{suppe}. On $\Hom_\Aopp(P,M)$, consider the following customary $A$-bimodule structure
\begin{equation}
  \label{priamo}
(a \lact f \bract b)(p) = af(bp), \qquad a, b \in A, p \in P.
\end{equation}
Define then the map 
\begin{equation}
    \label{teams1}
\begin{array}{rcl}
  \gl^r \colon  \Hom_\Aopp(P,M) &\to& \Hom_\Aopp(P,U_\ract \otimes_\ahha M), \\
  f &\mapsto& \big\{p \mapsto f(p_{(0)})_{(-1)} p_{(1)} \otimes_\ahha f(p_{(0)})_{(0)} \big\}.
\end{array}
\end{equation}
  %
%and for any $f \in \Hom_\Aopp(P,M)$, define
%$\gl^r f \in \Hom_\Aopp(P,U_\ract \otimes_\ahha M)$ by
%\begin{equation}
%  \label{teams}
%(\gl^r f)(p) = f(p_{(0)})_{(-1)} p_{(1)} \otimes_\ahha f(p_{(0)})_{(0)}.
%\end{equation}
Now, the canonical map
$\jmath \colon  U_\ract \otimes_\ahha \Hom_\Aopp(P,M) \to \Hom_\Aopp(P,U_\ract \otimes_\ahha M)$
is an injection if $U_\ract$ is $A$-projective. We can then make the following definition:

\begin{dfn}
  \label{rightcom}
  For a right $U$-comodule $P$ and a left $U$-comodule $M$ over a left bialgebroid $(U,A)$ with $U_\ract$ projective over $A$, the $A$-bimodule
  $$
  \HOM^r(P,M) \coloneqq
\{ f \in \Hom_\Aopp(P,M) \mid \gl^r f \in  \mathrm{im}(\jmath) \}
%\HOM^r(P,M) = \{f \in \Hom_\Aopp(P,M) \mid \gl^r f \in  U_\ract \otimes_\ahha %\Hom_\Aopp(P,M)\}
  $$
is called the space of {\em (right) rational morphisms} from $P$ to $M$.
  \end{dfn}

In other words, $\HOM^r(P,M)$ consists of all $f \in \Hom_\Aopp(P,M)$
for which there exists an element
$
f_{(-1)} \otimes_\ahha f_{(0)} \in U_\ract \otimes_\ahha \Hom_\Aopp(P,M)
$
such that
$$
(\gl^r f)(p) = f_{(-1)} \otimes_\ahha f_{(0)}(p) 
$$
for all $p \in P$. By injectivity of the canonical map $\jmath$, we may simply write
$$
\gl^r f = f_{(-1)} \otimes_\ahha f_{(0)} 
$$
for any (right) rational $f$. If $U_\ract$ is finitely generated projective over $A$, then clearly all morphisms in $\Hom_\Aopp(P,M)$ are (right) rational.

\begin{lem}
  \label{high-gain}
Let $(U,A)$ be a left bialgebroid such that $U_\ract$ is projective over $A$.
%  \begin{enumerate}
 %   \compactlist{99}
%\item
If $P$ is a right $U$-comodule and $M$ a left $U$-comodule, then $\HOM^r(P,M)$ is a left $U$-comodule with coaction
given by $\jmath^{-1} \circ \gl^r$.
%induced by Eq.~\eqref{teams}.
%
% $$
% (\gl f)(p) = f(p_{(0)})_{(-1)} p_{(1)} \otimes_\ahha f(p_{(0)})_{(0)}.
% $$
%
% \item
% In particular, if the left bialgebroid $(U, A)$ additonally is a right Hopf algebroid, and both % $M, N$ are left $U$-comodules, then $\HOM^r(M,N)$ is a left $U$-comodule again with left coaction
% \begin{equation}
%  \label{coaction1}
%  (\gl f)(m) = f\big(\gve(m_{(-1)[+]}) m_{(0)}\big)_{(-1)} m_{(-1)[-]} \otimes_\ahha % f\big(\gve(m_{(-1)[+]}) m_{(0)}\big)_{(0)}.
% \end{equation}
% \end{enumerate}
\end{lem}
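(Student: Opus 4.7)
The plan is to verify three things in order: (a) that $\jmath^{-1} \circ \gl^r$ corestricts from $\Hom_\Aopp(P,M)$ to a map $\HOM^r(P,M) \to U_\ract \otimes_\ahha \HOM^r(P,M)$, (b) that the resulting coaction is coassociative, and (c) that it is counital. Well-definedness of $\jmath^{-1} \circ \gl^r$ as a map into $U_\ract \otimes_\ahha \Hom_\Aopp(P,M)$ is immediate from the very definition of $\HOM^r(P,M)$, together with the $A$-projectivity of $U_\ract$ which makes $\jmath$ injective. I would begin by explicitly checking that the tensor $f_{(-1)} \otimes_\ahha f_{(0)} \coloneqq \jmath^{-1}(\gl^r f)$ is correctly balanced over $A$: this follows from the $A$-bilinearity \eqref{taklinco} of the left coaction on $M$ and the analogous \eqref{taklinco2} for the right coaction on $P$, combined with the bimodule convention \eqref{priamo} on $\Hom_\Aopp(P,M)$.

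For the corestriction of the codomain to $U_\ract \otimes_\ahha \HOM^r(P,M)$, the task is to prove that each $f_{(0)}$ appearing in $f_{(-1)} \otimes_\ahha f_{(0)}$ is itself rational. I would establish this together with coassociativity in one stroke: applying $\gl^r$ once more and using the right coassociativity of $p \mapsto p_{(0)} \otimes_\ahha p_{(1)}$ together with the left coassociativity of $m \mapsto m_{(-1)} \otimes_\ahha m_{(0)}$, the element $f_{(-1)} \otimes_\ahha (\gl^r f_{(0)})$ of $U_\ract \otimes_\ahha \Hom_\Aopp(P, U_\ract \otimes_\ahha M)$ can be identified, after reorganising the two product $\otimes$'s of $U$-entries, with the image of $\gD(f_{(-1)}) \otimes_\ahha f_{(0)}$. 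Invoking $A$-projectivity of $U_\ract$ a second time so that the iterated analogue of $\jmath$, namely the canonical map $U_\ract \otimes_\ahha U_\ract \otimes_\ahha \Hom_\Aopp(P,M) \to \Hom_\Aopp(P, U_\ract \otimes_\ahha U_\ract \otimes_\ahha M)$, is also injective, this equality simultaneously yields rationality of each $f_{(0)}$, the coassociativity
\begin{equation*}
\gD(f_{(-1)}) \otimes_\ahha f_{(0)} = f_{(-1)} \otimes_\ahha f_{(0)(-1)} \otimes_\ahha f_{(0)(0)},
\end{equation*}
and the factorisation of the iterated coaction through $U_\ract \otimes_\ahha U_\ract \otimes_\ahha \HOM^r(P,M)$.

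Counitality is then obtained by applying $\gve$ to the leftmost $U$-factor of $(\gl^r f)(p) = f(p_{(0)})_{(-1)} p_{(1)} \otimes_\ahha f(p_{(0)})_{(0)}$: using the bialgebroid counit identities to rewrite $\gve(f(p_{(0)})_{(-1)} p_{(1)})$, and then the counit axioms for the left coaction on $M$ and the right coaction on $P$, the expression collapses to $f(p)$, which is exactly the required counitality for $\HOM^r(P,M)$.

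The main obstacle I anticipate is the bookkeeping of the several distinct $A$-module structures on $U$ carried by $s$ and $t$ acting from either side: at each step one has to track which balancing is used for every tensor product and which side of $U$ receives which multiplication. In particular, in step (b) one has to be careful that the two presentations of the iterated expression land in the \emph{same} balanced tensor product $U_\ract \otimes_\ahha U_\ract \otimes_\ahha M$, so that the iterated $\jmath$ can be inverted; this is precisely where the second use of $A$-projectivity of $U_\ract$ enters.
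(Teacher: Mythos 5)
Your overall route coincides with the paper's: counitality via the counit axioms, coassociativity via the direct computation identifying $\big((\id\otimes_\ahha\gl^r)\gl^r f\big)(p)$ with $\big((\gD\otimes_\ahha\id)\gl^r f\big)(p)$, and the corestriction of the codomain to $U_\ract\otimes_\ahha\HOM^r(P,M)$ deduced from that same identity (this is exactly \cite[Lem.~2.2]{Ulb:SPACOLM}, which the paper follows). The one place where your justification is not the right mechanism is the corestriction step: you claim that injectivity of the iterated canonical map $U_\ract\otimes_\ahha U_\ract\otimes_\ahha\Hom_\Aopp(P,M)\to\Hom_\Aopp(P,U_\ract\otimes_\ahha U_\ract\otimes_\ahha M)$ ``simultaneously yields rationality of each $f_{(0)}$.'' Taken literally this does not follow: the components of a representative $\sum_i u_i\otimes_\ahha g_i$ of $\gl^r f$ are not individually well defined, and from $\sum_i u_i\otimes_\ahha \gl^r g_i\in\mathrm{im}(\id\otimes_\ahha\jmath)$ one cannot conclude $\gl^r g_i\in\mathrm{im}(\jmath)$ for each $i$. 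What is actually needed, and what the paper invokes, is that $\HOM^r(P,M)$ is the pullback of $\gl^r$ along $\jmath$ and that tensoring with the flat $A$-module $U_\ract$ preserves this pullback, so that $U_\ract\otimes_\ahha\HOM^r(P,M)=(\id\otimes_\ahha\gl^r)^{-1}\big(\mathrm{im}(\id\otimes_\ahha\jmath)\big)$; the conclusion $\gl^r f\in U_\ract\otimes_\ahha\HOM^r(P,M)$ then follows from $(\id\otimes_\ahha\gl^r)\gl^r f=(\gD\otimes_\ahha\id)\gl^r f\in\mathrm{im}(\id\otimes_\ahha\jmath)$. Injectivity of the iterated $\jmath$ only serves to compare the two iterated coactions inside $\Hom_\Aopp(P,U_\ract\otimes_\ahha U_\ract\otimes_\ahha M)$. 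Since projectivity of $U_\ract$ gives flatness, the gap is repairable under the stated hypotheses, but the flat-base-change (pullback preservation) argument should be made explicit; the rest of your outline, including the balancedness check and the counitality computation, matches the paper's proof.
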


\begin{proof}
  We need to show that 
  $\gl^r f$ lands in $ U_\ract \otimes_\ahha \HOM^r(P,M)$ for any $f \in \HOM^r(P,M)$ and to
  check that $\gl^r$ is counital and coassociative, and this will be done along
  the same line of argumentation as in \cite[Lem~2.2]{Ulb:SPACOLM}.
  Counitality is straightforward using the properties of a bialgebroid counit
  along with the $A$-linearity \eqref{taklinco} of the coaction on $M$. Furthermore, we have for any $p \in P$
    \begin{small}
\begin{eqnarray*}
\big((\id \otimes_\ahha \gl^r)\gl^r f\big)(p)
&
=
&
f_{(-1)} \otimes_\ahha (\gl^r f_{(0)})(p)
\\
  &
\overset{\scriptscriptstyle{\eqref{teams1}}}{=} 
&
f_{(-1)} \otimes_\ahha f_{(0)}(p_{(0)})_{(-1)} p_{(1)} \otimes_\ahha f_{(0)}(p_{(0)})_{(0)}
\\
  &
\overset{\scriptscriptstyle{\eqref{teams1}}}{=} 
&
f(p_{(0)})_{(-2)} p_{(1)} \otimes_\ahha f(p_{(0)})_{(-1)} p_{(2)} \otimes_\ahha f(p_{(0)})_{(0)}
\\
  &
=
&
%f_{(-2)} \otimes_\ahha f_{(-1)} \otimes_\ahha f_{(0)}(p) 
%=
\big((\gD \otimes_\ahha \id)\gl^r f\big)(p).
\end{eqnarray*}
    \end{small}   
The so-obtained equation
    %$(\id \otimes_\ahha \gl)\gl f = (\gD \otimes_\ahha \id)\gl f$
    not only shows coassociativity but also that $\gl^r f \in U \otimes_\ahha \HOM^r(P,M)$: the $A$-bimodule $\Hom^r(N,M)$ can be seen as a pull-back for $\gl^r$ and $\jmath$; but tensoring with the flat $A$-module $U_\ract$ preserves finite limits and hence $U_\ract \otimes_\ahha \Hom^r(N,M)$ is the pullback for $\id_\uhhu \otimes_\ahha \gl^r$ and $\id_\uhhu \otimes_\ahha \jmath$. Then, from
    $
(\id_\uhhu \otimes_\ahha \gl^r)\gl^r f = (\gD \otimes_\ahha \id_\uhhu)\gl^r f
$
one observes $(\id_\uhhu \otimes_\ahha \gl^r)\gl^r f \in \mathrm{im}(\id_\uhhu \otimes_\ahha \jmath)$ and therefore $\gl^r f \in U_\ract \otimes_\ahha \HOM^r(N,M)$.
  \end{proof}

For the sake of simplicity, by slight abuse of notation, we will denote the coaction on $\HOM^r(N, M)$ by $\gl^r$ instead of $\jmath^{-1} \circ \gl^r$ when
$\gl^r f \in  \mathrm{im}(\jmath)$.

Observe that with respect to the $A$-bimodule structure \eqref{priamo}, we have by \eqref{taklinco} and the the right $U$-comodule version of \eqref{takcom2},
\begin{equation*}
  \label{teams18}
(\gl^r (a \lact f \bract b))(p) = a \lact f(p_{(0)})_{(-1)} p_{(1)} \bract b \otimes_\ahha f(p_{(0)})_{(0)},
\end{equation*}
as one rightly would expect from the property \eqref{taklinco} of a left $U$-coaction.

Now, if the left bialgebroid $(U,A)$ is right Hopf and $\due U \lact {}$ projective over $A$, using the monoidal functor $\ucomod \to \comodu$ mentioned in \S\ref{paco}, we can start from two left $U$-comodules $N$ and $M$
and transform the former into a right one as in Eq.~\eqref{soestrene}.  Repeating then an analogous discussion as above, we can define the left $U$-comodule
$$
 \HOM^r(N,M)  \coloneqq  \{ f \in \Hom_\Aopp(N,M) \mid \gl^r f \in  \mathrm{im}(\jmath) \},
$$
where
\begin{equation}
  \label{coaction1}
  (\gl^r f)(n) = f\big(\gve(n_{(-1)[+]}) n_{(0)}\big)_{(-1)} n_{(-1)[-]} \otimes_\ahha f\big(\gve(n_{(-1)[+]}) n_{(0)}\big)_{(0)}.
\end{equation}
However, instead of using the explicit expression \eqref{coaction1} in later intricate computations, for better readability it is more convenient to consider the left $U$-comodule $N$ as a right one as in Eq.~\eqref{soestrene} and to stick to the notation used there, that is, we will {\em always} write the left coaction \eqref{coaction1} on $\HOM^r(N,M)$
as
\begin{equation}
  \label{coaction2}
(\gl^r f)(n) = f(n_{[0]})_{(-1)} n_{[1]} \otimes_\ahha f(n_{[0]})_{(0)}.
\end{equation}
Lemma \ref{high-gain} then becomes: 

\begin{prop}
Let $(U,A)$ be a left bialgebroid such that $U_\ract$ and $\due U \lact {}$ are projective.
If $(U, A)$ in addition  is right Hopf and both $N, M$ are left $U$-comodules, then $\HOM^r(N,M)$ is a left $U$-comodule with left coaction induced by Eq.~\eqref{coaction1}.
%\begin{equation}
%  \label{coaction1}
%  (\gl f)(m) = f\big(\gve(m_{(-1)[+]}) m_{(0)}\big)_{(-1)} m_{(-1)[-]} \otimes_\ahha %f\big(\gve(m_{(-1)[+]}) m_{(0)}\big)_{(0)}.
%\end{equation}
\end{prop}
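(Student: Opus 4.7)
The plan is to reduce the statement to the already established Lemma \ref{high-gain} by invoking the monoidal functor $\ucomod \to \comodu$ recalled in \S\ref{paco}. More precisely, under the standing assumptions that $(U,A)$ is right Hopf and that $\due U \lact {}$ is $A$-projective, the formula \eqref{soestrene} equips the left $U$-comodule $N$ with a right $U$-coaction $n \mapsto n_{[0]} \otimes_\ahha n_{[1]}$. Treating $N$ as this right $U$-comodule, and $M$ as a left $U$-comodule as given, I am exactly in the situation of Lemma \ref{high-gain}, which requires only that $U_\ract$ be $A$-projective (an assumption already present here).

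Applying that lemma directly, the $A$-bimodule $\HOM^r(N,M)$ is a left $U$-comodule with coaction obtained by corestricting \eqref{teams1} through the injection $\jmath$. The only residual verification is that, upon substituting the explicit form of the induced right coaction, namely $n_{(0)} \otimes_\ahha n_{(1)} = \gve(n_{(-1)[+]}) n_{(0)} \otimes_\ahha n_{(-1)[-]}$, the resulting coaction on $\HOM^r(N,M)$ is precisely the one displayed in \eqref{coaction1}. This is a direct substitution into $f \mapsto f(n_{(0)})_{(-1)} n_{(1)} \otimes_\ahha f(n_{(0)})_{(0)}$ and requires no additional argument.

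Concerning the notational convention announced just before the proposition (writing $(\gl^r f)(n) = f(n_{[0]})_{(-1)} n_{[1]} \otimes_\ahha f(n_{[0]})_{(0)}$ as in \eqref{coaction2}), this is literally the expression obtained by inserting the square-bracket Sweedler notation for the induced right coaction \eqref{soestrene}, so consistency of notation is automatic. The $A$-bilinearity of $\gl^r$ with respect to the bimodule structure \eqref{priamo} transported via \eqref{soestrene} is inherited from the right $U$-comodule case treated in Lemma \ref{high-gain}.

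The only potential obstacle is ensuring that the projectivity hypotheses are sufficient: one needs $U_\ract$ projective so that the canonical map $\jmath$ remains injective (and finite limits are preserved, as used in the proof of Lemma \ref{high-gain}), and $\due U \lact {}$ projective so that the functor from \S\ref{paco} producing the right coaction \eqref{soestrene} is well-defined in the first place. Both are explicitly in the hypotheses, so no further technical work is needed; the proof reduces essentially to citing Lemma \ref{high-gain} together with the functor $\ucomod \to \comodu$ from \cite[Thm.~4.1.1]{CheGavKow:DFOLHA} recalled in \S\ref{paco}.
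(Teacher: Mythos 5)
Your proposal is correct and follows exactly the paper's own route: the paper likewise obtains the Proposition by converting $N$ into a right $U$-comodule via the functor of \S\ref{paco} (Eq.~\eqref{soestrene}, requiring $\due U \lact {}$ projective) and then invoking Lemma \ref{high-gain} (requiring $U_\ract$ projective), with \eqref{coaction1} being the resulting coaction written out explicitly. Nothing further is needed.
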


Observe that the projectivity of $U_\ract$ is needed to have $\jmath$ injective (and $\ucomod$ abelian) whereas the one of  $\due U \lact {}$ to guarantee well-definedness of Eq.~\eqref{soestrene}. We will refer to this situation henceforth as $U$ being {\em $A$-biprojective}. %or {\em biprojective over $A$}.

\subsubsection{Left internal Homs in $\ucomod$}

Let $(U,A)$ be a left bialgebroid and $N, M \in \ucomod$.
With respect to the canonical codiagonal left $U$-coaction
$$
M \otimes_\ahha \due U \lact {}
\to U_\ract \otimes_\ahha (M \otimes_\ahha \due U \lact {}), \quad m \otimes_\ahha u \mapsto m_{(-1)} u_{(1)} \otimes_\ahha (m_{(0)} \otimes_\ahha u_{(2)}),
$$
on $M \otimes_\ahha \due U \lact {}$,
consider the space
$
\Hom^\uhhu(N,M \otimes_\ahha \due U \lact {})
$
of left $U$-colinear maps: for each of its elements, we are going to deploy 
a sort of Sweedler notation with summation understood, that is, for any $g \in \Hom^\uhhu(N,M \otimes_\ahha \due U \lact {})$, write
$$
g'(n) \otimes_\ahha g''(n)  \coloneqq  g(n),
$$
and equip $ \Hom^\uhhu(N,M \otimes_\ahha \due U \lact {})$ with an $A$-bimodule structure by means of
\begin{equation}
\label{kohinoor}
(a \blact g \ract b)(n)  \coloneqq  g'(n) \otimes_\ahha a \blact g''(n) \ract b
\end{equation}
for all $a, b \in A$, see Eq.~\eqref{pergolesi} for notation.
If $(U,A)$ in addition is left Hopf, define 
\begin{equation}
    \label{teams}
\begin{array}{rcl}
  \gl^\ell \colon  \Hom^\uhhu(N,M \otimes_\ahha \due U \lact {})
  &\to& \Hom^\uhhu(N, U_\ract \otimes_\ahha \due {(M \otimes_\ahha \due U \lact {})} \blact {}\!),
  \\[2pt]
  g &\mapsto& \big\{
  n \mapsto
  g''(n)_- \otimes_\ahha \due {(g'(n) \otimes_\ahha g''(n)_+)} \blact {}
  \big\},
\end{array}
\end{equation}
which becomes well-defined if the first tensor factor relates to the third by means of multiplying with the target map from the right.
Again, the canonical map
$
\jmath \colon  U_\ract \otimes_\ahha  \Hom^\uhhu(N,M \otimes_\ahha \due U \lact {}) \to \Hom^\uhhu(N, U_\ract \otimes_\ahha (M \otimes_\ahha \due U \lact {}))
$
is injective if $U_\ract$ is $A$-projective, which allows us to define:

\begin{dfn}
  \label{leftcom}
  For two left $U$-comodules $N, M$ over a left bialgebroid $(U,A)$ which is left Hopf and with $U_\ract$ projective over $A$, the $A$-bimodule
  $$
\HOM^\ell(N,M) = \{g \in \Hom^\uhhu(N,M \otimes_\ahha \due U \lact {}) \mid \gl^\ell g \in  \mathrm{im}(\jmath) \}
  $$
is called the space of {\em (left) rational morphisms} from $N$ to $M$.
  \end{dfn}

In other words, $\HOM^\ell(N,M)$ consist of all $g \in  \Hom^\uhhu(N,M \otimes_\ahha \due U \lact {})$
for which there exists an element
$
g_{(-1)} \otimes_\ahha g_{(0)} \in U_\ract \otimes_\ahha  \Hom^\uhhu(N,M \otimes_\ahha \due U \lact {})
$
such that
$$
(\gl^\ell g)( n) = g_{(-1)} \otimes_\ahha g_{(0)}(n) 
$$
for all $n \in N$. Again, by injectivity of the canonical map $\jmath$, we may simply write
$$
\gl^\ell g  \coloneqq  g_{(-1)} \otimes_\ahha g_{(0)} 
$$
for any (left) rational $g$.
As before, if $U_\ract$ is finitely generated projective over $A$, then all morphisms in $ \Hom^\uhhu(N,M \otimes_\ahha \due U \lact {})$ are (left) rational.

\begin{lem}
  \label{high-gain2}
Let $(U,A)$ be a left Hopf algebroid over a left bialgebroid such that $U_\ract$ is projective, and $N, M \in \ucomod$.
Then $\HOM^\ell(N,M)$ is a left $U$-comodule as well, with coaction
given by $\jmath^{-1} \circ \gl^\ell$.
\end{lem}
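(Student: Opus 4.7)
The plan is to adapt, in a suitably dualised form, the argument used for Lemma \ref{high-gain}, swapping the right-comodule data used there for the left Hopf translation $(-)_+\otimes_\Aopp(-)_-$ coming from $\alpha_\ell^{-1}$. As in \emph{loc.~cit.}, three things need verification: counitality of $\jmath^{-1}\circ\gl^\ell$, coassociativity, and the fact that $\gl^\ell g$ actually takes values in $U_\ract\otimes_\ahha\HOM^\ell(N,M)$ rather than just in the larger $U_\ract\otimes_\ahha\Hom^\uhhu(N,M\otimes_\ahha\due U\lact{})$ (so that the iteration of $\gl^\ell$ needed for coassociativity is meaningful in the first place).

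Counitality is the easiest step. Applying $\gve\otimes_\ahha\id$ to the explicit formula $\gl^\ell g(n)=g''(n)_-\otimes(g'(n)\otimes_\ahha g''(n)_+)$ and invoking one of the standard Schauenburg identities for the translation map of a left Hopf algebroid (essentially the consequence of $\alpha_\ell\circ\alpha_\ell^{-1}=\id$ after applying the counit in an appropriate slot) collapses the expression back to $g'(n)\otimes_\ahha g''(n)=g(n)$.

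Coassociativity is the main computational step I would focus on. Reading off from the formula for $\gl^\ell$ that $g_{(0)}$ is (informally) the assignment $n\mapsto g'(n)\otimes_\ahha g''(n)_+$, a second application of $\gl^\ell$ gives
\[
\big((\id\otimes_\ahha\gl^\ell)\gl^\ell g\big)(n)=g''(n)_-\otimes g''(n)_{+-}\otimes\big(g'(n)\otimes_\ahha g''(n)_{++}\big),
\]
while on the other hand
\[
\big((\gD\otimes_\ahha\id)\gl^\ell g\big)(n)=g''(n)_{-(1)}\otimes g''(n)_{-(2)}\otimes\big(g'(n)\otimes_\ahha g''(n)_+\big).
\]
These two expressions agree by the Schauenburg identity $u_{-(1)}\otimes u_{-(2)}\otimes u_+=u_-\otimes u_{+-}\otimes u_{++}$, which is the coassociative behaviour of the translation map of a left Hopf algebroid listed in the appendix.

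Once coassociativity is in hand, the landing condition drops out by the same pullback argument as in Lemma \ref{high-gain}: projectivity (and hence flatness) of $U_\ract$ makes $U_\ract\otimes_\ahha\HOM^\ell(N,M)$ the pullback of $\id\otimes_\ahha\gl^\ell$ along $\id\otimes_\ahha\jmath$, and the equation just verified shows $(\id\otimes_\ahha\gl^\ell)\gl^\ell g$ to lie in $\mathrm{im}(\id\otimes_\ahha\jmath)$, forcing $\gl^\ell g\in U_\ract\otimes_\ahha\HOM^\ell(N,M)$. The main obstacle I anticipate is the careful bookkeeping of $A$- versus $\Aop$-balanced tensor products: $u_+\otimes_\Aopp u_-$ lives over $\Aop$, $\gD(u)$ over $A$, and the codiagonal coaction on $M\otimes_\ahha\due U\lact{}$ mixes both with the coaction on $M$, so that isolating precisely the correct Schauenburg identity (rather than a superficially similar but false one) is what constitutes the real content of the proof.
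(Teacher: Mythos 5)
Your proposal is correct and follows essentially the same route as the paper: the coassociativity computation hinges on exactly the identity \eqref{Sch5} (which is the one you state, up to a permutation of the tensor slots), counitality follows from \eqref{Sch8}, and the landing of $\gl^\ell g$ in $U_\ract \otimes_\ahha \HOM^\ell(N,M)$ is deduced from coassociativity by the same flat-base-change pullback argument borrowed from Lemma \ref{high-gain}. Nothing is missing.
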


\begin{proof}
  Here we argue exactly as in \S\ref{sahale} by which essentially the only aspect left to show is coassociativity (counitality being obvious from Eq.~\eqref{Sch8}), that is, for any $g \in \HOM^\ell(N,M)$, we have
  \begin{small}
\begin{eqnarray*}
\big((\id \otimes_\ahha \gl^\ell)\gl^\ell g\big)(n)
&
=
&
g_{(-1)} \otimes_\ahha (\gl^\ell g_{(0)})(n)
\\
  &
\overset{\scriptscriptstyle{\eqref{teams}}}{=} 
&
g_{(-1)} \otimes_\ahha \big(
g''_{(0)}(n)_{-} \otimes_\ahha (g'_{(0)}(n) \otimes_\ahha g''_{(0)}(n)_{+}) \big)
\\
  &
\overset{\scriptscriptstyle{\eqref{teams}}}{=} 
&
g''(n)_{-}
\otimes_\ahha 
g''(n)_{+-}
\otimes_\ahha (g'(n) \otimes_\ahha g''(n)_{++}) 
\\
  &
\overset{\scriptscriptstyle{\eqref{Sch5}}}{=} 
&
g''(n)_{-(1)}
\otimes_\ahha 
g''(n)_{-(2)}
\otimes_\ahha (g'(n) \otimes_\ahha g''(n)_{+}) 
\\
&=&
\big((\gD \otimes_\ahha \id)\gl^\ell g\big)(n),
\end{eqnarray*}
    \end{small} 
\!\!\!\!  which again implies $\gl^\ell g  \in  U_\ract  \otimes_\ahha  \HOM^\ell(N,M) $ as in the proof of Lemma~\ref{high-gain}.
  \end{proof}

As above, to lighten notation, we will write the coaction on $\HOM^\ell(N,M)$ simply as $\gl^\ell$ instead of $\jmath^{-1} \circ \gl^\ell$.

\begin{rem}
  \label{relajante1}
  The striking asymmetry in defining $\HOM^r$ and $\HOM^\ell$ and their coactions is due to the fact ({\em cf.} Remark \ref{curry}) that for a left bialgebroid one has a functor $\ucomod \to \comodu$ only in presence of a right Hopf structure but not in presence of a left one (which {\em notably} complicates matters in all what follows). Even worse, and in strong contrast to the case of $\umod$ in \S\ref{threeone}, where the left internal Homs did not require {\em any} Hopf structure at all, for defining a coaction on  $\HOM^\ell$ a left Hopf structure is sufficient but for its being left internal Homs, we additionally will assume the presence of a right Hopf structure as well, see the subsequent Lemma \ref{keineDokumentecomod}.
\end{rem}

\begin{rem}
  \label{castelnuovo}
  Nevertheless, in case $(U, A) = (H, k)$ is a Hopf algebra over a field $k$ with invertible antipode $S$, all these difficulties disappear and a short computation reveals that $\HOM^\ell(N,M)$ and  $\HOM^r(N,M)$ reproduce the well-known internal Hom functors (see, {\em e.g.}, \cite[Prop.~1.2]{CaeGue:OTCORHM}) which use the antipode and its inverse, {\em i.e.}, in both cases the $k$-module $\Hom_k(N, M)$ with left coactions
  \begin{eqnarray*}
  (\gl^\ell f)(n) \!\!&=\!\!& S(n_{(-1)}) f(n_{(0)})_{(-1)} \otimes_k f(n_{(0)})_{(0)},
 \\
  (\gl^r f)(n) \!\!&=\!\!& 
  f(n_{(0)})_{(-1)} S^{-1}(n_{(-1)}) \otimes_k f(n_{(0)})_{(0)},
\end{eqnarray*}
  respectively, for all $n \in N$. 
  \end{rem}

\begin{lem}
\label{keineDokumentecomod}
Let $(U,A)$ be a left bialgebroid which is biprojective over $A$.
\begin{enumerate}
\compactlist{99}
\item[({\it i})]
If $(U,A)$ is in addition  right Hopf, then the category $\ucomod$ of left $U$-comodules is right closed monoidal, {\em i.e.}, has right internal Hom functors.
  \item[({\it ii})]
  If the left bialgebroid $(U,A)$ is simultaneously left and right Hopf, $\ucomod$ is left closed monoidal as well, that is, has left internal Hom functors. As a consequence, in this case $\ucomod$ is biclosed monoidal, {\em i.e.}, has both left and right internal Hom functors.
%\item[({\it iii})]
  %Consequently, if $(U,A)$ is both a left and a right Hopf algebroid over an underlying left bialgebroid, the category $\umod$ is biclosed monoidal,
  %that is, has both left and right internal Hom functors.
\end{enumerate}
\end{lem}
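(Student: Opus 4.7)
The plan is to establish, in each of the two cases, the Hom-tensor adjunction that identifies $\HOM^r(N,M)$ and $\HOM^\ell(N,M)$ as the right resp.\ left internal Homs of $\ucomod$, whose monoidal product $\otimes_\ahha$ carries the codiagonal left $U$-coaction. Throughout, the biprojectivity hypothesis will be used tacitly to guarantee injectivity of the canonical maps $\jmath$ from the definitions of $\HOM^r$ and $\HOM^\ell$, so that rationality is a well-posed condition.

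\emph{Part (i).} I would start from the ordinary Hom-tensor bijection
$$
\Hom_\Aopp(P \otimes_\ahha N, M) \xrightarrow{\ \sim \ } \Hom_\Aopp\bigl(P, \Hom_\Aopp(N, M)\bigr),
\qquad \phi \mapsto \{p \mapsto \phi(p \otimes_\ahha -)\},
$$
and show that it restricts to a bijection
$$
\xi \colon \Hom^\uhhu(P \otimes_\ahha N, M) \xrightarrow{\ \sim \ } \Hom^\uhhu\bigl(P, \HOM^r(N, M)\bigr).
$$
This asks for three checks: first, that for colinear $\phi$ the map $\tilde{\phi}(p) \coloneqq \phi(p \otimes_\ahha -)$ is right rational in the sense of Definition \ref{rightcom}; second, that $p \mapsto \tilde{\phi}(p)$ is itself left $U$-colinear with respect to the coaction \eqref{coaction2} on $\HOM^r(N,M)$; third, that the assignment $\psi \mapsto \{p \otimes_\ahha n \mapsto \psi(p)(n)\}$ preserves colinearity in the opposite direction. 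The only substantive calculation is the second: unfolding the codiagonal coaction on $P \otimes_\ahha N$ and converting the left coaction on $N$ into the right coaction $n \mapsto n_{[0]} \otimes_\ahha n_{[1]}$ of \eqref{soestrene} matches exactly the right hand side of \eqref{coaction2}, and precisely at this translation the right Hopf structure enters via the inverse Hopf-Galois map $\alpha_r^{-1}$. Rationality of $\tilde{\phi}(p)$ is then simply the observation that the $U$-factor in the resulting expression lies in the image of $\jmath$.

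\emph{Part (ii).} Here I would set up the candidate adjunction
$$
\zeta \colon \Hom^\uhhu(N \otimes_\ahha P, M) \to \Hom^\uhhu\bigl(P, \HOM^\ell(N, M)\bigr),
$$
but now the target lives inside $\Hom^\uhhu(N, M \otimes_\ahha \due U \lact {})$, so $\zeta(\phi)(p)$ must be a left $U$-colinear map $N \to M \otimes_\ahha U$ rather than a bare $A$-linear map $N \to M$. The natural candidate is
$$
\zeta(\phi)(p)(n) \coloneqq \phi(n_{[0]} \otimes_\ahha p) \otimes_\ahha n_{[1]},
$$
which makes sense thanks to the right Hopf structure through \eqref{soestrene}, with inverse obtained by composing $\psi(p)$ with the contraction $m \otimes_\ahha u \mapsto u_- m \cdot u_+$ built from the left Hopf Galois map to collapse the extra tensor factor. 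The three items to verify are (a) rationality of $\zeta(\phi)(p)$ in the sense of Definition \ref{leftcom}, (b) left $U$-colinearity of $p \mapsto \zeta(\phi)(p)$, and (c) mutual inverseness of $\zeta$ and $\zeta^{-1}$; each is a Sweedler-notation chase combining the compatibility identities \eqref{blumare1}, \eqref{blumare2} between left and induced right coactions with the Hopf relations \eqref{Sch1}--\eqref{Sch9} and \eqref{Tch2}--\eqref{Tch9} from Appendix \ref{bialgebroids}.

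\emph{Anticipated obstacle.} The hardest step is verifying colinearity in part (ii): unlike in (i), where \eqref{soestrene} is only used to unravel the codiagonal coaction on $P \otimes_\ahha N$, here one must simultaneously translate the coaction on $N$ into its right incarnation \emph{and} manipulate the trailing $U$-factor in $M \otimes_\ahha U$ through the left Hopf Galois inverse in order to reproduce the coaction \eqref{teams} on $\HOM^\ell(N,M)$. This simultaneous intervention is exactly why \emph{both} Hopf structures are required, in accordance with the asymmetry already pointed out in Remark \ref{relajante1}; moreover, once this computation is in hand, the biclosedness claim of the final sentence of part (ii) is simply the conjunction of (i) and (ii).
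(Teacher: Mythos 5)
Part (i) of your proposal matches the paper's proof essentially verbatim: the right internal Hom is $\HOM^r(N,M)$ with the plain Hom--tensor adjunction \eqref{coad1}, and a single Sweedler computation, using \eqref{soestrene}, \eqref{blumare1} and \eqref{Tch7}, establishes simultaneously the rationality of $\xi f(p)$ and the colinearity of $\xi f$, exactly as you describe.

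Part (ii), however, contains a genuine error. Your candidate adjunction $\zeta(\phi)(p)(n) = \phi(n_{[0]} \otimes_\ahha p) \otimes_\ahha n_{[1]}$ places the induced right coaction \eqref{soestrene} on the wrong tensor factor: the trailing $U$-leg must record the coaction of $p$, not of $n$; the paper's formula \eqref{coad2} is $\zeta(f)(p) = f(- \otimes_\ahha p_{[0]}) \otimes_\ahha p_{[1]}$. With your formula, $\zeta(\phi)(p)$ is not even left $U$-colinear as a map $N \to M \otimes_\ahha \due U \lact {}$: applying the codiagonal coaction and the colinearity of $\phi$ produces a leg $p_{(-1)}$ sandwiched between legs of the coaction of $n$ (in the Hopf-algebra picture of Remark \ref{castelnuovo} one gets a factor of the form $n_{(-1)}\,p_{(-1)}\,S^{-1}(n_{(-2)})$), which cannot be collapsed; with the paper's formula it is the combination $p_{(-2)}S^{-1}(p_{(-1)})$ that disappears, via \eqref{Tch5} and \eqref{Tch7} in the general case. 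Likewise, the colinearity of $p \mapsto \zeta(f)(p)$ with respect to the coaction \eqref{teams} on $\HOM^\ell(N,M)$ --- which applies the left Hopf translation map to the trailing leg --- is precisely the identity \eqref{blumare2} evaluated on $p$; with $n_{[1]}$ in the trailing leg there is nothing to produce the required $p_{(-1)}$. Finally, your proposed inverse, a ``contraction $m \otimes_\ahha u \mapsto u_- m\, u_+$'', is not defined: $M$ is only a comodule and carries no $U$-action. The correct inverse \eqref{coad2a} is simply $(\id \otimes_\ahha \gve)$ applied to the trailing leg. Your closing diagnosis is correspondingly off target: in part (ii) the coaction on $N$ is never converted into its right incarnation; the right Hopf structure enters through the induced right coaction of $P$, and the left Hopf structure only through the coaction \eqref{teams} defining the comodule structure on $\HOM^\ell$.
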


\begin{proof}
  Let $M, N, P \in \ucomod$ be left $U$-comodules.

(i):
As the notation suggests, the right internal Homs are given by the $\HOM^r(N,M)$ from Definition \ref{rightcom}, where $N$ is seen as a right $U$-comodule via \eqref{soestrene},   equipped with the left $U$-coaction \eqref{coaction1} resp.~\eqref{coaction2}, along with the adjunction (iso)morphism
\begin{equation}
  \label{coad1}
  \begin{array}{rcl}
  \xi \colon  \Hom^\uhhu(P \otimes_\ahha N, M)
  &\to& \Hom^\uhhu(P, \HOM^r(N, M)),
  \\[2pt]
  f &\mapsto& \{p \mapsto f(p \otimes_\ahha -) \},
  \\[2pt]
\{ \tilde f(p)(n) \mapsfrom p \otimes_A n\} &\mapsfrom& \tilde f,
  \end{array}
  \end{equation}
  induced by the customary Hom-tensor adjunction.
 % that is,
 % $(\xi f)(p) = f(p \otimes_\ahha -)$.
  To see that $\xi f$ indeed lands in $\Hom^\uhhu(P, \HOM^r(N, M))$, we have to show that $(\xi f)(p) \in \Hom_\Aopp(N,M)$ is a (right) rational morphism from $N$ to $M$ and that $\xi f$ is left $U$-colinear with respect to the coactions of $P$ and $\HOM^r(N,M)$ in \eqref{coaction2}.
Both statements are shown in a single computation only: one has
   \begin{small}
\begin{eqnarray*}
\big(\gl^r \xi f(p)\big)(n) 
  &
\overset{\scriptscriptstyle{\eqref{coaction2}}}{=} 
&
  \big(\xi f(p)\big)(n_{[0]})_{(-1)} n_{[1]} \otimes_\ahha \big(\xi f(p)\big)(n_{[0]})_{(0)}
    \\
  &
\overset{\scriptscriptstyle{}}{=} 
&
  f(p \otimes_\ahha n_{[0]})_{(-1)} n_{[1]} \otimes_\ahha f(p \otimes_\ahha n_{[0]})_{(0)}
    \\
  &
\overset{\scriptscriptstyle{}}{=} 
&
p_{(-1)} n_{[0](-1)} n_{[1]} \otimes_\ahha f(p \otimes_\ahha n_{[0](0)})
    \\
  &
\overset{\scriptscriptstyle{\eqref{blumare1}}}{=} 
&
p_{(-1)} n_{(-1)[+]} n_{(-1)[-]} \otimes_\ahha f(p \otimes_\ahha n_{[0]})
\\
&
\overset{\scriptscriptstyle{\eqref{Tch7}, \eqref{takcom}}}{=} 
&
p_{(-1)} \otimes_\ahha \big(\xi f(p_{(0)})\big)(n), %\in U \otimes_\ahha \Hom_\Aopp(N,M),
\end{eqnarray*}
   \end{small}
   where we used the $U$-colinearity of $f$ in the third step;
this not only shows that $\gl^r \xi f(p) \in U_\ract \otimes_\ahha \Hom_\Aopp(N,M)$
 and  hence $\xi f(p) \in \HOM^r(N,M)$ but simultaneously that $\xi f$ is left $U$-colinear as well. That the asserted inverse indeed inverts $\xi$ is obvious.

  (ii):
Here, in turn, as the notation again suggests, the left internal Homs are given by the $\HOM^\ell$ from Definition \ref{leftcom} equipped with the left coaction in Eq.~\eqref{teams}, along with the adjunction morphism
\begin{equation}
      \label{coad2}
  \begin{array}{rcl}
    \zeta \colon  \Hom^\uhhu(N \otimes_\ahha P, M) &\to& \Hom^\uhhu(P, \HOM^\ell(N,M)),
    \\
f &\mapsto& \{ p  \mapsto   f(- \otimes_\ahha p_{[0]}) \otimes_\ahha  p_{[1]} \},
\end{array}
\end{equation}
where $p \mapsto p_{[0]} \otimes_A p_{[1]}$ is the right $U$-coaction \eqref{soestrene} on the left $U$-comodule $P$.
 % given by
 % \begin{equation}
 %   \label{nasa1}
 %   \quad [\zeta f(p)](n)  \coloneqq  f(n_{(0)} \otimes_\ahha p)_{[0]} \otimes_\ahha  %f(n_{(0)} \otimes_\ahha p)_{[1]} n_{(-1)}.
%  \end{equation}
%Observe that with the colinearity of $U$ and Eq.~\eqref{blumare2} this notably %simplifies to
%  \begin{equation}
%    \label{coad2}
%    \quad [\zeta f(p)](n)  \coloneqq  f(n \otimes_\ahha p_{[0]}) \otimes_\ahha  p_{[1]}.
%  \end{equation}
  To verify that $\zeta f$ indeed lands in $\Hom^\uhhu(P, \HOM^\ell(N,M))$, we need to check that $\zeta f$ is $U$-colinear and also that $\zeta f(p) \in \HOM^\ell(N,M)$ for any $p \in P$, hence that $\zeta f(p)$ is a (left) rational morphism from $N$ to $M$ and as such left $U$-colinear again.
  As for the first issue,
  we compute by means of the codiagonal coaction on $M \otimes_\ahha U$:
\begin{small}
  \begin{eqnarray*}
    &&
\pig(\big(\zeta f(p)\big)(n)\pig)_{(-1)} \otimes_\ahha \pig(\big(\zeta f(p)\big)(n)\pig)_{(0)}
\\
&
\overset{\scriptscriptstyle{\eqref{coad2}}}{=} 
&
f(n \otimes_\ahha p_{[0]})_{(-1)}  p_{[1](1)} \otimes_\ahha \big(f(n \otimes_\ahha p_{[0]})_{(0)}  \otimes_\ahha p_{[1](2)}\big)
    \\
  &
\overset{\scriptscriptstyle{}}{=} 
&
n_{(-1)} p_{[0](-1)} p_{[1](1)} \otimes_\ahha \big(f(n_{(0)} \otimes_\ahha p_{[0](0)}) \otimes_\ahha  p_{[1](2)}\big)
    \\
  &
\overset{\scriptscriptstyle{\eqref{blumare1}}}{=} 
&
n_{(-1)} p_{(-1)[+]} p_{(-1)[-](1)} \otimes_\ahha \big(f(n_{(0)} \otimes_\ahha p_{(0)}) \otimes_\ahha  p_{(-1)[-](2)}\big)
    \\
  &
\overset{\scriptscriptstyle{\eqref{Tch5}, \eqref{Tch7}, \eqref{takcom}}}{=} 
&
%n_{(-1)} \otimes_\ahha \big(f(n_{(0)} \otimes_\ahha p_{[0]}) \otimes_\ahha  p_{[1]}\big)
%   \\
%  &
%\overset{\scriptscriptstyle{\eqref{coad2}}}{=} 
%&
n_{(-1)} \otimes_\ahha \big(\zeta f(p)\big)(n_{(0)}),
\end{eqnarray*}
\end{small}
where we used the colinearity of $f$ in the second step. Secondly,
   \begin{small}
\begin{eqnarray*}
\big(\gl^\ell \zeta f(p)\big)(n) 
  &
\overset{\scriptscriptstyle{\eqref{teams}}}{=} 
&
\big(\zeta f(p)\big)''(n)_- \otimes_\ahha \pig(\big(\zeta f(p)\big)'(n) \otimes_\ahha \big(\zeta f(p)\big)''(n)_+\pig) 
    \\
  &
\overset{\scriptscriptstyle{\eqref{coad2}}}{=} 
&
p_{[1]-} \otimes_\ahha \big(f(n \otimes_\ahha p_{[0]}) \otimes_\ahha p_{[1]+}\big) 
    \\
  &
\overset{\scriptscriptstyle{\eqref{blumare2}}}{=} 
&
p_{(-1)} \otimes_\ahha \big(f(n \otimes_\ahha p_{(0)[0]}) \otimes_\ahha p_{(0)[1]} \big)
    \\
  &
\overset{\scriptscriptstyle{\eqref{coad2}}}{=} 
&
p_{(-1)} \otimes_\ahha \big(\zeta f(p_{(0)})\big)(n), 
\end{eqnarray*}
   \end{small}
which not only shows that $\gl^\ell \zeta f(p) \in U_\ract \otimes_\ahha \Hom^\uhhu(N, M \otimes_\ahha \due U \lact {})$ for any $p \in P$ and hence $\zeta f(p) \in \HOM^\ell(N, M)$ but simultaneously also that $\zeta f$ is $U$-colinear in the desired sense, that is, $(\zeta f(p))_{(-1)} \otimes_\ahha (\zeta f(p))_{(0)} = p_{(-1)} \otimes_\ahha \zeta f(p_{(0)})$.

The inverse $\Hom^\uhhu\!(P, \HOM^\ell(N,M)) \to  \Hom^\uhhu\!(N \otimes_\ahha P, M)$ of $\zeta$ will be given by
\begin{equation}
  \label{coad2a}
(\zeta^{-1} g)(n \otimes_\ahha p) = (\id \otimes \gve) g(p)(n)
= g(p)'(n) \gve\big(g(p)''(n) \big).
 \end{equation}
In turn, to show that $\zeta^{-1} g$ is in fact a left $U$-colinear map from $N \otimes_\ahha P$ to $M$, observe first that $g \in  \Hom^\uhhu(P, \HOM^\ell(N,M))$ implies two identities, namely
   \begin{small}
\begin{eqnarray*}
  p_{(-1)} \otimes_\ahha \big(g(p_{(0)})'(n) \otimes_\ahha g(p_{(0)})''(n) \big)
  & \!\!\!\! = & \!\!\!\! 
  g(p)''(n)_- \otimes_\ahha \big(g(p)'(n) \otimes_\ahha g(p)''(n)_+ \big),
  \\
 n_{(-1)} \otimes_\ahha \big(g(p)'(n_{(0)}) \otimes_\ahha g(p)''(n_{(0)}) \big)
  & \!\!\!\!  = & \!\!\!\! 
  g(p)'(n)_{(-1)}  g(p)''(n)_{(1)}  \otimes_\ahha \big(g(p)'(n)_{(0)} \otimes_\ahha g(p)''(n)_{(2)} \big),  
\end{eqnarray*}
\end{small}
for any $n \in N$ and $p \in P$. With the help of these two equations, we proceed by
   \begin{small}
     \begin{eqnarray*}
       &&
       n_{(-1)}p_{(-1)} \otimes_\ahha (\zeta^{-1} g)(n_{(0)} \otimes_\ahha p_{(0)})
       \\
  &
\overset{\scriptscriptstyle{}}{=} 
&
 n_{(-1)}p_{(-1)} \otimes_\ahha g(p_{(0)})'(n_{(0)}) \gve\big(g(p_{(0)})''(n_{(0)}) \big)
    \\
  &
\overset{\scriptscriptstyle{}}{=} 
&
g(p_{(0)})'(n)_{(-1)}  g(p_{(0)})''(n)_{(1)}
p_{(-1)} \otimes_\ahha g(p_{(0)})'(n)_{(0)} \gve\big(g(p_{(0)})''(n)_{(2)} \big)
    \\
  &
\overset{\scriptscriptstyle{\eqref{takcom}}}{=} 
&
g(p_{(0)})'(n)_{(-1)}  g(p_{(0)})''(n)
p_{(-1)} \otimes_\ahha g(p_{(0)})'(n)_{(0)} 
    \\
  &
\overset{\scriptscriptstyle{}}{=} 
&
g(p)'(n)_{(-1)}  g(p)''(n)_+ g(p)''(n)_-
\otimes_\ahha g(p)'(n)_{(0)}
   \\
  &
\overset{\scriptscriptstyle{\eqref{Sch7}}}{=} 
&
g(p)'(n)_{(-1)} \bract \gve\big(g(p)''(n)\big)
\otimes_\ahha g(p)'(n)_{(0)}
   \\
  &
\overset{\scriptscriptstyle{\eqref{taklinco}}}{=} 
&
(\zeta^{-1} g)(n \otimes_\ahha p)_{(-1)} \otimes_\ahha  (\zeta^{-1} g)(n \otimes_\ahha p)_{(0)},
\end{eqnarray*}
   \end{small}
\!\!\! and therefore $\zeta^{-1} g \in  \Hom^\uhhu(N \otimes_\ahha P, M)$ as claimed.
Verifying that $\zeta^{-1}$ effective\-ly inverts $\zeta$ is shown by similar computations and is therefore skipped. The last statement is an obvious consequence of (i) and the statements just verified.
  \end{proof}

\begin{rem}
  \label{relajante2}
  One might wonder whether one could not, in the spirit of Remark \ref{hours} for the case of $\umod$, simply transport the left $U$-coaction \eqref{teams} to $\Hom_\ahha(N,M)$
  by means of the $k$-linear isomorphism
$$
\nu \colon  \Hom^\uhhu(N,M \otimes_\ahha U) \to \Hom_\ahha(N,M), \quad f \mapsto (\id \otimes_\ahha \gve) f
$$
(with inverse
$g \mapsto \{n \mapsto  g(n_{(0)})_{[0]} \otimes_\ahha g(n_{(0)})_{[1]} n_{(-1)} 
\}$),
so as to work with the seemingly easier $\Hom_\ahha(N,M)$ instead of $\Hom^\uhhu(N,M \otimes_\ahha U)$. However, this will not work since $\nu$ is not a morphism of $A$-bimodules when considering the $A$-bimodule structure \eqref{kohinoor}. Apparently, and in clear contrast to what was said in Remark \ref{hours}, the left internal Homs $\HOM^\ell(N,M) = \Hom^\uhhu(N,M \otimes_\ahha U)$ cannot be simplified, not even in presence of more structure, {\em cf.} also Remark \ref{relajante1}.
\end{rem}

\begin{notation}
Again, as the left and right internal Homs are quite different and it sometimes is convenient to remember the explicit $U$-colinearity or $A$-linearity in question, we shall not always use the sort of concealing notation $\HOM^r$ and $\HOM^\ell$ but often write $\Hom_\Aopp$ and $\Hom^\uhhu( -, - \otimes_\ahha U)$ even if the internal Homs with their left $U$-comodule structure are meant.
  \end{notation}

\subsection{$\ucomod$ as a bimodule category}
Again, the internal Homs allow to define the
structure of a bimodule category on $\ucomod$ with the help of the adjoint actions.
More precisely, Lemmata \ref{glenngould} \& \ref{keineDokumentecomod}
directly imply:

\begin{cor}
  \label{wasasesam2}
  Let $(U,A)$ be a left bialgebroid with $U$ biprojective over $A$.
  \begin{enumerate}
    \compactlist{99}
\item
  If $(U, A)$ is in addition  right Hopf,
  then the operation
  \begin{equation}
    \label{methamill1}
    \begin{array}{rcl}
      \ucomod^\op \times \ucomod &\to& \ucomod,
      \\
      (N,M) &\mapsto&
      N
\fren
      M  \coloneqq 
      \HOM^r(N, M)
\end{array}
    \end{equation}
gives $\ucomod$ the structure of a left module category over $\ucomod^\op$.
%  \item
    If $(U, A)$ is both left and right Hopf,~then
   the operation
   \begin{equation*}
%    \label{methamill2}
    \begin{array}{rcl}
      \ucomod \times \ucomod^\op &\to& \ucomod,
      \\
      (M,N) &\mapsto&
      N
\frenop
      M  \coloneqq 
      \HOM^\ell(N, M)
\end{array}
   \end{equation*}
     defines on $\ucomod$ the structure of a right module category over $\ucomod^\op$.
  % \item
Hence, if the left bialgebroid $(U, A)$ is simultaneously left and right Hopf, then
     $\ucomod$ is a bimodule category over the monoidal category $\ucomod^\op$.
       \item
         The operation \eqref{methamill1} restricts to a left action
  $$
  \yd \times  \ayd
  \to  \ayd
$$
 if $\HOM^r(N,M)$ is seen as a right $U$-module by means of %Eq.~\eqref{giovedi}.
  the right $U$-action on $\Hom_\Aopp(N,M)$
defined by 
\begin{equation}
\label{giovedi}
(f \copact u)(n)  \coloneqq  f(u_{(1)} n)u_{(2)} %\qquad u \in U, \ f \in \Hom_\Aopp(N,M)
\end{equation}
for $N \in \umod$ and $M \in \modu$. 
Hence, $\ayd$ is a left module category over the monoidal category $\yd$.
\end{enumerate}
\end{cor}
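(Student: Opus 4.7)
Part~(i) is a formal consequence of Lemma~\ref{glenngould} applied to Lemma~\ref{keineDokumentecomod}: under the relevant Hopf hypotheses the category $\ucomod$ is right (resp.\ left, resp.\ bi-) closed, and the general construction of \S\ref{dominosteine} promotes the internal Homs to adjoint left and right actions of $\ucomod^{\op}$ on $\ucomod$. The required associativity and unit constraints $\phi$, $\psi$, $\vartheta$ follow from the same Yoneda--adjunction argument as in the proof of Lemma~\ref{glenngould}; as in Corollary~\ref{wasasesam1}, one could make them explicit, but we postpone this until they are actually needed in the sequel.

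For part~(ii), the first step is to verify that \eqref{giovedi} does define a right $U$-action on $\Hom_\Aopp(N,M)$ whenever $N \in \umod$ and $M \in \modu$, and that the $A$-bimodule structure it induces agrees with \eqref{priamo}: both reductions are straightforward using the bialgebroid compatibility of $\Delta$ with the source and target maps. Next, one checks that when $N$ is Yetter--Drinfel'd and $M$ is anti Yetter--Drinfel'd, the left $U$-coaction~\eqref{coaction2} on $\HOM^r(N,M)$ is well-defined on the subspace carrying the right action~\eqref{giovedi} and induces the same $A$-bimodule structure \eqref{peinture2} as the latter, which reduces to \eqref{romaedintorni} for $M$ together with the Yetter--Drinfel'd axioms for $N$.

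The core of the argument, and the main obstacle, is the aYD compatibility~\eqref{funkydesertbreaks1} itself for $(\HOM^r(N,M), \lambda^r, \copact)$. The plan is to compute
\[
\lambda^{r}(f \copact u)(n) \;=\; (f \copact u)(n_{[0]})_{(-1)}\, n_{[1]} \otimes_{A} (f \copact u)(n_{[0]})_{(0)}
\]
by expanding \eqref{giovedi} and then migrating the left $U$-coaction of $M$ past the evaluation via the equivalent aYD relation~\eqref{funkydesertbreaks1a}; the surviving $U$-factors should then be recombined using the Yetter--Drinfel'd condition on $N$, transferred to the right coaction~\eqref{soestrene} by means of \eqref{blumare1} and \eqref{blumare2}. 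Repeated application of the left and right Hopf--Galois translation identities, the coassociativity of $\Delta$, and the bialgebroid relations between source/target maps and the coproduct should assemble the expression into the desired form $u_{-} f_{(-1)} u_{+(1)} \otimes_{A} f_{(0)} \copact u_{+(2)}$. This computation is in the same spirit as those in the proof of Theorem~\ref{tegel1}, but noticeably more delicate since two nontrivially interacting coactions are in play rather than a single contraaction, and since both Hopf structures must be used simultaneously.

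Once $\HOM^r(N,M) \in \ayd$ has been established, the left module category axioms \eqref{tarrega3}--\eqref{tarrega4} for $\yd \times \ayd \to \ayd$ are inherited directly from part~(i): the associativity and unit constraints for the $\ucomod^{\op}$-action \eqref{methamill1} restrict, because the canonical adjunction isomorphism $\HOM^r(P \otimes_\ahha N, M) \simeq \HOM^r(N, \HOM^r(P, M))$ and the unit isomorphism $\HOM^r(A, M) \simeq M$ are easily seen to be right $U$-linear with respect to \eqref{giovedi}. No additional pentagon or triangle verification is therefore required.
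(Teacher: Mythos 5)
Your proposal is correct and follows essentially the same route as the paper: part~(i) is deduced formally from Lemma~\ref{glenngould} together with Lemma~\ref{keineDokumentecomod} (the paper only spells out the constraints $\phi$, $\psi$, $\gvt$ explicitly ``for later use''), and part~(ii) rests on the single computation showing that $\gl^r(f \copact u)$ has the aYD form, which the paper carries out using \eqref{coaction2}, \eqref{giovedi}, the aYD condition \eqref{funkydesertbreaks1} for $M$ together with \eqref{Sch4}, and the Yetter--Drinfel'd condition on $N$ via \eqref{Tch3}. The only caveat is that you leave this central computation as a plan (``should assemble into the desired form'') rather than executing it; the plan is sound and does terminate in $u_- f_{(-1)} u_{+(1)} \otimes_\ahha (f_{(0)} \copact u_{+(2)})(n)$, but in a final write-up the chain of identities needs to be displayed.
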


\begin{proof}
  (i): All statements in this part follow, as said, directly from 
Lemmata \ref{glenngould} \& \ref{keineDokumentecomod}.
Nevertheless, as in the module case, for later use and the sake of explicit illustration of the abstract theory, let us discuss the involved associative constraints.

  As for the left action, for $M, N, P \in \ucomod$, there is an isomorphism
  $P \fren
(N \fren
M) \simeq (P \otimes_\ahha N) \fren
M$
of left $U$-comodules, which amounts to the map
  \begin{equation}
    \label{leitz1}
 \phi_{\scriptscriptstyle P, N, M} \colon  \HOM^r(P \otimes_\ahha N, M)
  \to \HOM^r(P, \HOM^r(N, M))
  \end{equation}
given by the customary Hom-tensor adjunction. Let us see that this is, indeed, a map of left $U$-comodules. To start with, if $P \otimes_\ahha N$ is a left $U$-comodule with codiagonal coaction,
%$
%(p \otimes_\ahha n)_{(-1)} \otimes_\ahha (p \otimes_\ahha n)_{[0]}
% \coloneqq  p_{(-1)} n_{(-1)} \otimes_\ahha (p_{(0)} \otimes_\ahha n_{(0)}),
%$
as one expects its induced right coaction \eqref{soestrene} is given by
\begin{equation}
\label{cosmicflux}
  (p \otimes_\ahha n)_{[0]} \otimes_\ahha (p \otimes_\ahha n)_{[1]}  \coloneqq  (p_{[0]} \otimes_\ahha n_{[0]}) \otimes_\ahha n_{[1]} p_{[1]}.
  \end{equation}
Abbreviating $\phi = \phi_{\scriptscriptstyle P,N,M}$, one then has  for any $p \in P$, $n \in N$: 
  \begin{small}
\begin{eqnarray*}
f_{(-1)} \otimes_\ahha (\phi f_{[0]})(p)(n)
   &
\overset{\scriptscriptstyle{}}{=} 
&
f_{(-1)} \otimes_\ahha f_{(0)}(p \otimes_\ahha n)
    \\
  &
\overset{\scriptscriptstyle{}}{=} 
&
f(p_{[0]} \otimes_\ahha n_{[0]})_{(-1)}
n_{[1]}p_{[1]}
\otimes_\ahha
f(p_{[0]} \otimes_\ahha n_{[0]})_{(0)}
    \\
  &
\overset{\scriptscriptstyle{}}{=} 
&
(\phi f)(p_{[0]})(n_{[0]})_{(-1)}
n_{[1]}p_{[1]}
\otimes_\ahha
(\phi f)(p_{[0]}) (n_{[0]})_{(0)}
\\
&
\overset{\scriptscriptstyle{}}{=} 
&
(\phi f)(p_{[0]})_{(-1)}
p_{[1]}
\otimes_\ahha
       (\phi f)(p_{[0]})_{(0)}(n)
 \\
&
\overset{\scriptscriptstyle{}}{=} 
&
\big((\phi f)_{(-1)}
\otimes_\ahha
       (\phi f)_{(0)}(p)\big)(n),      
\end{eqnarray*}
  \end{small}
  hence
  $
(\id \otimes_\ahha \phi) \gl^r f =
\gl^r(\phi f)
$,
as desired.

%% In order to effectively obtain a left module category in the sense of Definition \ref{modcat}, we still have to verify the pentagon resp.\ triangle axiom \eqref{tarrega3} resp.\ \eqref{tarrega4}, which, however, follow easily from the properties of the standard Hom-tensor adjunction, $\ucomod$ being strict.

Discussing the associativity constraint for the right action is slightly more laborious as
the standard Hom-tensor adjunction is not the map that will induce the comodule isomorphism
$
M \frenop (N \otimes_\ahha P) \simeq (M \frenop N) \frenop P
$
in question. Observe first that
$$
M \frenop (N \otimes_\ahha P) = \HOM^\ell(N \otimes_\ahha P, M) = \Hom^\uhhu( N \otimes_\ahha P, M \otimes_\ahha \due U \lact {})
$$
on the level of $k$-modules, along with
$$
(M \frenop N) \frenop P = \HOM^\ell(P, \HOM^\ell(N,M)) =   \Hom^\uhhu(P, \Hom^\uhhu(N, M \otimes_\ahha \due U \lact {}) \otimes_\ahha \due U \lact {}),
$$
where $\Hom^\uhhu(N, M \otimes_\ahha \due U \lact {})$ is seen as an $A$-bimodule as in \eqref{kohinoor} and as a left $U$-comodule as in \eqref{teams}.
We then claim that the map
\begin{small}
  \begin{equation}
    \label{bademser}
    \begin{split}
      \psi_{\scriptscriptstyle M, N, P} %=  \tilde\zeta
       \colon 
      & \Hom^\uhhu( N \otimes_\ahha P, M \otimes_\ahha  \due U \lact {}) \to  \Hom^\uhhu(P, \Hom^\uhhu(N, M \otimes_\ahha  \due U \lact {}) \otimes_\ahha  \due U \lact {}),
  \\
& f  \mapsto \big\{ p \mapsto f'(- \otimes_\ahha p_{[0]}) \otimes_\ahha  f''(- \otimes_\ahha p_{[0]})_{(1)} p_{[1]} \otimes_\ahha   f''(- \otimes_\ahha p_{[0]})_{(2)} \big\},
\end{split}
\end{equation}
\end{small}
where we wrote $f(n \otimes_\ahha p) =: f'(n \otimes_\ahha p) \otimes_\ahha f''(n \otimes_\ahha p)$, 
is an isomorphism of left $U$-comodules.
Using the same kind of component-wise notation twice for elements in
$\Hom^\uhhu(P, \Hom^\uhhu(N, M \otimes_\ahha  \due U \lact {}) \otimes_\ahha \due U \lact {})$, and abbreviating $\psi = \psi_{\scriptscriptstyle M, N, P}$, this can be rewritten as
\begin{small}
  \begin{equation*}
   % \label{bademser}
    \begin{split}
      (\psi f)(p)(n) &= (\psi f)'(p)'(n) \otimes_\ahha (\psi f)'(p)''(n) \otimes_\ahha (\psi f)''(p)
      \\
&=
      f'(n \otimes_\ahha p_{[0]}) \otimes_\ahha  f''(n \otimes_\ahha p_{[0]})_{(1)} p_{[1]} \otimes_\ahha   f''(n \otimes_\ahha p_{[0]})_{(2)},
\end{split}
\end{equation*}
\end{small}
for all $n \in N$ and $p \in P$.

We have to show four things now:
that $(\psi f)(p) \in
\HOM^\ell(N,M) \otimes_\ahha \due U \lact {}$
%\Hom^\uhhu(N, M \otimes_\ahha U) \otimes_\ahha \due U \lact {}$
for any $p \in P$ and any $f \in \HOM^\ell(N \otimes_\ahha P,M)$,
that $\psi f$ is $U$-colinear in the given sense, that $\psi$ is a morphism of left $U$-comodules, and finally that it is bijective.
As for the first issue, observe that from the left $U$-colinearity
\begin{small}
  \begin{equation}
    \label{subscribe21}
  \begin{split}
    &
   f'(n \otimes_\ahha p)_{(-1)} f''(n \otimes_\ahha p)_{(1)} \otimes_\ahha
 f'(n \otimes_\ahha p)_{(0)} \otimes_\ahha f''(n \otimes_\ahha p)_{(2)}  
\\
&    =
    n_{(-1)} p_{(-1)} \otimes_\ahha  f'(n_{(0)} \otimes_\ahha p_{(0)}) \otimes_\ahha
    f''(n_{(0)} \otimes_\ahha p_{(0)})
  \end{split}
\end{equation}
\end{small}
of an $f \in \Hom^\uhhu( N \otimes_\ahha P, M \otimes_\ahha U)$ follows with Eqs.~\eqref{blumare1}, \eqref{Tch7}, and \eqref{takcom}
that
\begin{small}
  \begin{equation}
    \label{subscribe2}
  \begin{split}
    &
f'(n \otimes_\ahha p_{[0]})_{(-1)} f''(n \otimes_\ahha p_{[0]})_{(1)} p_{[1]}
\\
&
\quad
\otimes_\ahha
 f'(n \otimes_\ahha p_{[0]})_{(0)} \otimes_\ahha f''(n \otimes_\ahha p_{[0]})_{(2)}p_{[2]}   \otimes_\ahha f''(n \otimes_\ahha p_{[0]})_{(3)}  
    \\
&    =
n_{(-1)} \otimes_\ahha  f'(n_{(0)} \otimes_\ahha p_{[0]}) \otimes_\ahha f''(n_{(0)} \otimes_\ahha p_{[0]})_{(1)} p_{[1]} \otimes_\ahha  f''(n_{(0)} \otimes_\ahha p_{[0]})_{(2)},
  \end{split}
\end{equation}
\end{small}
and therefore directly
  \begin{small}
\begin{eqnarray*}
  &&
  \gl^\ell\Big( (\psi f)'(p)'(n) \otimes_\ahha (\psi f)'(p)''(n)\Big) \otimes_\ahha (\psi f)''(p)
  \\
    &
 \overset{{\scriptscriptstyle{}}}{=} 
&
(\psi f)'(p)'(n)_{(-1)} (\psi f)'(p)''(n)_{(1)}
\otimes
(\psi f)'(p)'(n)_{(0)} \otimes_\ahha (\psi f)'(p)''(n)_{(2)}
\otimes_\ahha (\psi f)''(p)
    \\
  &
\overset{\scriptscriptstyle{\eqref{bademser}}}{=} 
&
f'(n \otimes_\ahha p_{[0]})_{(-1)} f''(n \otimes_\ahha p_{[0]})_{(1)} p_{[1]}
\\
& &
\quad
\otimes_\ahha
 f'(n \otimes_\ahha p_{[0]})_{(0)} \otimes_\ahha f''(n \otimes_\ahha p_{[0]})_{(2)}p_{[2]}   \otimes_\ahha f''(n \otimes_\ahha p_{[0]})_{(3)}  
    \\
  &
\overset{\scriptscriptstyle{\eqref{subscribe2}, \eqref{bademser}}}{=} 
&
n_{(-1)} \otimes_\ahha (\psi f)(p)(n_{(0)}),
\end{eqnarray*}
   \end{small}
  hence
  $(\psi f)(p) \in
\HOM^\ell(N,M) \otimes_\ahha \due U \lact {}$ for any $p \in P$,
as claimed.
The second issue above, {\em i.e.}, that $\psi f$ is $U$-colinear, is left to the reader. More interesting, 
 \begin{small}
\begin{eqnarray*}
  &&
(\psi f)_{(-1)} \otimes_\ahha (\psi f)_{(0)}(p)(n)
  \\
    &
 \overset{{\scriptscriptstyle{\eqref{teams}}}}{=} 
&
 (\psi f)''(p)_- \otimes_\ahha (\psi f)'(p)'(n) \otimes_\ahha (\psi f)'(p)''(n)
 \otimes_\ahha (\psi f)''(p)_+
    \\
  &
\overset{\scriptscriptstyle{\eqref{bademser}}}{=} 
&
f''(n \otimes_\ahha p_{[0]})_{(2)-} \otimes_\ahha f'(n \otimes_\ahha p_{[0]})
\otimes_\ahha
f''(n \otimes_\ahha p_{[0]})_{(1)} p_{[1]}   \otimes_\ahha f''(n \otimes_\ahha p_{[0]})_{(2)+}  
    \\
  &
\overset{\scriptscriptstyle{\eqref{Sch4}}}{=} 
&
f''(n \otimes_\ahha p_{[0]})_{-} \otimes_\ahha f'(n \otimes_\ahha p_{[0]})
\otimes_\ahha
f''(n \otimes_\ahha p_{[0]})_{+(1)} p_{[1]}   \otimes_\ahha f''(n \otimes_\ahha p_{[0]})_{+(2)}
    \\
  &
\overset{\scriptscriptstyle{\eqref{teams}}}{=} 
&
f_{(-1)} \otimes_\ahha
f'_{(0)}(n \otimes_\ahha p_{[0]}) \otimes_\ahha
f''_{(0)}(n \otimes_\ahha p_{[0]})_{(1)}p_{[1]} \otimes_\ahha
f'_{(0)}(n \otimes_\ahha p_{[0]})_{(2)}
    \\
  &
\overset{\scriptscriptstyle{\eqref{bademser}}}{=} 
&
f_{(-1)} \otimes_\ahha (\psi f_{(0)})(p)(n),
\end{eqnarray*}
   \end{small}
 hence $\psi$ is in fact a left $U$-comodule map, which proves the third issue mentioned above.
 Finally, we claim that $\psi$ is bijective, the inverse being given by
\begin{small}
  \begin{equation*}
%    \begin{split}
\begin{array}{rcl}
    \psi^{-1} \colon  
\Hom^\uhhu(P, \Hom^\uhhu(N, M \otimes_\ahha U) \otimes_\ahha U)
       &\to&
      \Hom^\uhhu( N \otimes_\ahha P, M \otimes_\ahha U),     
  \\
   g  &\mapsto& \big\{ n \otimes_\ahha p \mapsto
  (\id \otimes_\ahha \gve \otimes_\ahha \id)g(p)(n)
  \big\},
\end{array}
  %\end{split}
\end{equation*}
\end{small}
or, explicitly,
\begin{equation}
  \label{letatcestmoi}
(\psi^{-1} g)(n \otimes_\ahha p)  \coloneqq  g'(p)'(n)
  \otimes_\ahha \gve\big(g'(p)''(n) \big) \lact g''(p).
  \end{equation}
While $\psi^{-1} \circ \psi = \id$ follows directly from the counitality of the coproduct, that $\psi \circ \psi^{-1}$ yields the identity is slightly more laborious: the left $U$-colinearity of
$g \in \Hom^\uhhu(P, \Hom^\uhhu(N, M \otimes_\ahha U) \otimes_\ahha U)$ explicitly reads
\begin{small}
  \begin{equation*}
    \begin{split}
      p_{(-1)} \otimes_\ahha g(p_{(0)})(n)
      &=
      g(p)_{(-1)} \otimes_\ahha g(p)_{(0)}(n)
      \\
       &=
      g'(p)_{(-1)} g''(p)_{(1)} \otimes_\ahha 
      g'(p)_{(0)}'(n) \otimes_\ahha g'(p)_{(0)}''(n)
      \otimes_\ahha g''(p)_{(2)}
      \\
      &=
      g'(p)''(n)_-
      g''(p)_{(1)} \otimes_\ahha 
      g'(p)'(n)
      \otimes_\ahha g'(p)''(n)_+
      \otimes_\ahha
      g''(p)_{(2)},
    \end{split}
\end{equation*}
\end{small}
and therefore with Eqs.~\eqref{blumare1} and \eqref{Tch7}
\begin{small}
  \begin{equation}
    \label{traritrara}
    \begin{split}
      &      1 \otimes_\ahha
      g'(p)'(n)
      \otimes_\ahha g'(p)''(n)
      \otimes_\ahha g''(p)
      \\
&= p_{[0](-1)} p_{[1]} \otimes_\ahha g'(p_{[0](0)})'(n)
      \otimes_\ahha g'(p_{[0](0)})''(n)
      \otimes_\ahha g''(p_{[0](0)})
\\
     &=
      g'(p_{[0]})''(n)_-
      g''(p_{[0]})_{(1)}p_{[1]} \otimes_\ahha 
      g'(p_{[0]})'(n)
      \otimes_\ahha g'(p_{[0]})''(n)_+
      \otimes_\ahha
      g''(p_{[0]})_{(2)}.
    \end{split}
\end{equation}
\end{small}
With this,
 \begin{small}
\begin{eqnarray*}
  &&
  (\psi \psi^{-1} g)(p)(n)
    \\
    &
 \overset{{\scriptscriptstyle{\eqref{bademser}}}}{=} 
&
 (\psi^{-1} g)'(n \otimes_\ahha p_{[0]}) \otimes_\ahha
 (\psi^{-1} g)''(n \otimes_\ahha p_{[0]})_{(1)} p_{[1]}
 \otimes_\ahha
  (\psi^{-1} g)''(n \otimes_\ahha p_{[0]})_{(2)} 
    \\
  &
\overset{\scriptscriptstyle{\eqref{letatcestmoi}}}{=} 
&
g'(p_{[0]})'(n) \otimes_\ahha
 \gve\big(g'(p_{[0]})''(n) \big) \lact g''(p_{[0]})_{(1)} p_{[1]}
 \otimes_\ahha
 g''(p_{[0]})_{(2)}
    \\
  &
\overset{\scriptscriptstyle{\eqref{Sch7}}}{=} 
&
g'(p_{[0]})'(n) \otimes_\ahha
g'(p_{[0]})''(n)_+ g'(p_{[0]})''(n)_-
g''(p_{[0]})_{(1)} p_{[1]}
 \otimes_\ahha
 g''(p_{[0]})_{(2)}
    \\
  &
\overset{\scriptscriptstyle{\eqref{traritrara}}}{=} 
&
g'(p)'(n) \otimes_\ahha
g'(p)''(n) \otimes_\ahha
g''(p)
= g(p)(n),
\end{eqnarray*}
 \end{small}
 as desired.
 
%% To finalise the proof that we effectively obtain a right module category, we need to verify the analogous right versions of the pentagon and triangle axioms \eqref{tarrega3} resp.\ \eqref{tarrega4}, which again is lengthy but entirely straightforward to write down, using Eq.~\eqref{cosmicflux} and the fact that $\ucomod$ is a strict monoidal category.

 To conclude the discussion of the associators,
let us consider the {\em middle associativity constraint} required in Definition \ref{ratagnan}, that is, 
the
 isomorphism of left $U$-comodules
 $$
 \gvt_{\scriptscriptstyle P, M, N} \colon 
 ( P \fren M) \frenop N 
 \overset{\simeq}{\lra}
P \fren (M \frenop N)
 $$
for any $M, N, P \in \ucomod$,
%subject to the two pentagon axioms \eqref{tarrega5}
%and
%\eqref{tarrega6},
which amounts to a left $U$-comodule isomorphism
$
\HOM^\ell(N, \HOM^r(P,M)) \simeq \HOM^r(P, \HOM^\ell(N,M)). 
$
To start with, define the $k$-module isomorphism
\begin{small}
  \begin{equation*}
\begin{split}
    \gvt_{\scriptscriptstyle \scriptscriptstyle P, M, N} \colon  
    \Hom^\uhhu(N , \Hom_\Aopp(P, M) \otimes_\ahha U ) &\to
        \Hom_\Aopp(P, \Hom^\uhhu (N, M \otimes_\ahha U)) 
\end{split}
\end{equation*}
\end{small}
  given by
  \begin{small}
          \begin{equation}
\begin{split}
    \label{leitz2}
    (\gvt_{\scriptscriptstyle P, M, N} f)(p)(n) &= (\gvt_{\scriptscriptstyle P, M, N} f)(p)'(n) \otimes_\ahha (\gvt_{\scriptscriptstyle P, M, N} f)(p)''(n)
 \\
         &= f'(n)(p_{[0]}) \otimes_\ahha p_{[1]} f''(n).
    %
    %%     & (\eta f)(n)(p) =
%%     (\rho \otimes_\ahha \id)
%%     (\id \otimes_\ahha \gve \otimes_\ahha \id)
%%     (f(p)\sma\cdot  \otimes_\ahha \id) \gl (n)
%% %    \big((\id \otimes_\ahha \gve) f(p)(n_{(0)}) \big) (\id \otimes_\ahha n_{(-1)}), 
%%   \end{split}
%% \end{equation*}
%% or, even more explicitly:
%% \begin{equation*}
%%   \begin{split}
%% (\eta f)(n)(p) &= (\eta f)'(n)(p) \otimes_\ahha (\eta f)"(n)
%% \\
%%     &= f(p)'(n_{(0)})_{[0]} \otimes_\ahha \big(f(p)'(n_{(0)})_{[1]} \ract \gve\big(f(p)''(n_{(0)})\big) \big) n_{(-1)}. 
\end{split}
\end{equation}
\end{small}
  %We omit the straightforward check that this map is well-defined.
  Its inverse will be 
defined as
 \begin{small}
          \begin{equation}
\begin{split}
    \label{leitz2a}
    (\gvt^{-1}_{\scriptscriptstyle P, M, N} g)'(n)(p)
\otimes_\ahha (\gvt^{-1}_{\scriptscriptstyle P, M, N} g)''(n)
&=
g(p_{[0]})'(n) \gve(p_{[1][+]}) \otimes_\ahha p_{[1][-]} g(p_{[0]})''(n),
\end{split}
\end{equation}
\end{small}  
 the well-definedness of which over the Sweedler presentation of the right Hopf structure ({\em i.e.}, that is does not depend on the choice of a representative for the formal expression $p_{[0]} \otimes_\ahha p_{[1][+]} \otimes_\ahha p_{[1][-]}$) is not immediately visible to the naked eye but follows from a detailed consideration not unlikely the proof of the well-definedness of the coaction \eqref{soestrene} in  \cite[Thm.~4.1.1]{CheGavKow:DFOLHA} from the property $\gve(u \bract a) = \gve(a \blact u)$ of a bialgebroid counit, along with Eqs.~\eqref{Tch9}, \eqref{taklinco2}, and the right $A$-module structure on $\Hom_\Aopp(P, M)$ as in \eqref{priamo}, which implies that the tensor product $\Hom_\Aopp(P, M) \otimes_\ahha U$ is to be understood with respect to the ideal generated by $g(a\sma\cdot ) \otimes u - g\sma\cdot  \otimes a \lact u$ for $a \in A$ and $g \in  \Hom_\Aopp(P, M)$.

 That the two given maps in \eqref{leitz2} and \eqref{leitz2a} are mutual inverses follows more or less immediately from Eqs.~\eqref{Tch3}, \eqref{Tch4}, and \eqref{Tch7}.

 Next, let us verify that $\gvt$ is in fact a map (and hence an isomorphism) of left $U$-comodules. Abbreviating again
$\gvt = \gvt_{\scriptscriptstyle P, M, N}$ for better readability,  
one has by Eqs.~\eqref{teams}, \eqref{coaction2}, and \eqref{Sch3} for all $p \in P$ and $n \in N$:
\begin{equation*}
\begin{split}
& (\gvt f)_{(-1)} \otimes_\ahha (\gvt f)_{(0)}(p)(n) 
  \\
  & = (\gvt f)(p_{[0]})_{(-1)} p_{[1]} \otimes_\ahha (\gvt f)(p_{[0]})_{(0)}(n)
    \\
    & = \big((\gvt f)(p_{[0]})''(n)\big)_- \, p_{[1]} \otimes_\ahha
    (\gvt f)(p_{[0]})'(n) \otimes_\ahha \big((\gvt f)(p_{[0]})''(n)\big)_+
    \\
    & = f''(n)_- p_{[1](1)-} p_{[1](2)}
    \otimes_\ahha
    f'(n)(p_{[0]}) \otimes_\ahha p_{[1](1)+} f''(n)_+
     \\
    & = f''(n)_- 
    \otimes_\ahha
    f'(n)(p_{[0]}) \otimes_\ahha p_{[1]} f''(n)_+
    \\
    & = f_{(-1)} 
    \otimes_\ahha
    f'_{(0)}(n)(p_{[0]}) \otimes_\ahha p_{[1]} f''_{(0)}(n)
    \\
    & =
    f_{(-1)} \otimes_\ahha (\gvt f_{(0)})(p)(n), 
\end{split}
\end{equation*}
for any $f \in \HOM^\ell(N, \HOM^r(P, M)) \subset \Hom^\uhhu(N, \Hom_\Aopp(P,M) \otimes_\ahha U)$ and therefore 
$
\gl^\ell \circ \gvt%_{\scriptscriptstyle P, M, N}
=
(\id_\uhhu \otimes_\ahha \gvt)%_{\scriptscriptstyle P, M, N})
\circ \gl^r,
$
as claimed.

(ii): Finally, let $N \in \yd$ and $M \in \ayd$. We have to show that in this case  $\HOM^r(N,M)$ is an aYD module as well with respect to the right action \eqref{giovedi} and the left coaction \eqref{coaction2}. Indeed, for
any $f \in \HOM^r(N,M)$, one has
  \begin{small}
\begin{eqnarray*}
\gl^r(f \copact u)(n)
  &
\overset{\scriptscriptstyle{\eqref{coaction2}}}{=} 
&
(f \copact u)(n_{[0]})_{(-1)} n_{[1]} \otimes_\ahha (f \copact u)(n_{[0]})_{(0)}
    \\
  &
\overset{\scriptscriptstyle{\eqref{giovedi}}}{=} 
&
\big(f(u_{(1)} n_{[0]})u_{(2)}\big)_{(-1)} n_{[1]} \otimes_\ahha \big(f(u_{(1)} n_{[0]})u_{(2)}\big)_{(0)}
    \\
  &
\overset{\scriptscriptstyle{\eqref{funkydesertbreaks1}, \eqref{Sch4}}}{=} 
&
u_- f(u_{+(1)} n_{[0]})_{(-1)} u_{+(2)} n_{[1]} \otimes_\ahha f(u_{+(1)} n_{[0]})_{(0)} u_{+(3)}
    \\
  &
\overset{\scriptscriptstyle{\eqref{Tch3}}}{=} 
&
u_- f\big((u_{+(2)} n)_{[0]}\big)_{(-1)}
(u_{+(2)} n)_{[1]} u_{+(1)} \otimes_\ahha f\big((u_{+(2)} n)_{[0]}\big)_{(0)}
u_{+(3)}
\\
&
\overset{\scriptscriptstyle{\eqref{coaction2}}}{=} 
&
u_- f_{(-1)} u_{+(1)}
\otimes_\ahha f_{(0)}(u_{+(2)} n)
u_{+(3)}
\\
&
\overset{\scriptscriptstyle{\eqref{giovedi}}}{=} 
&
u_- f_{(-1)} u_{+(1)}
\otimes_\ahha (f_{(0)} \copact u_{+(2)})(n)
\end{eqnarray*}
   \end{small}
\!\!\!  for $n \in N$, $u \in U$, where in the third step we used the fact that $M \in \ayd$ and 
  that $N \in \yd$ in the fourth (see \cite[Def.~4.2]{Schau:DADOQGHA}).
This concludes the~proof.
  \end{proof}

\subsection{The bimodule centre in the {bialgebroid~comodule~category}}

We can now, thanks to Corollary \ref{wasasesam2}, examine the centre
$\cZ_{\ucomod^\op}(\ucomod)$
of the bimodule category $\ucomod$ with respect to its adjoint actions
given by all pairs $(M, \tau)$ of objects $M \in \ucomod$ for which there is a family of isomorphisms
$$
\tau_\enne \colon 
N \fren M
\stackrel{\simeq}{\lra}
N \frenop M
$$
of left $U$-comodules natural in $N \in \ucomod$. With respect to
this centre and
its full subcategory
$\cZ'_{\ucomod^\op}(\ucomod)$ of stable objects as in Definition \ref{stablepable},
%which we, once more, recall to be 
%defined by the condition that the identity map $\id_\emme \in \Hom^\uhhu(M,M)$ is mapped to itself by the chain of isomorphisms \eqref{subcategory},
we
can state the following result:

\begin{theorem}
  \label{tegel2}
Let an $A$-biprojective left bialgebroid $(U, A)$ be both left and right Hopf.
\begin{enumerate}
  \compactlist{99}
  \item
    Then any anti Yetter-Drinfel'd module $M$ induces a central structure
  \begin{equation*}
    \tau_\enne \colon 
    \HOM^r(N,M)
    %=    M \frenop N    
    \to
    %N \fren M =
    \HOM^\ell(N,M),
    \end{equation*}
explicitly given on the level of $k$-modules by
    \begin{small}
    \begin{equation}
  \label{michigan2}  
      \begin{array}{rcl}
\Hom_\Aopp(N, M)
    &\!\!\!\to&\!\!\!
        \Hom^\uhhu(N, M \otimes_\ahha U),
  \\[2pt]
g &\!\!\!\mapsto&\!\!\!    \big\{n \mapsto \big(g(n_{[0]})_{[0]} \otimes_\ahha g(n_{[0]})_{[1]}\big) \mpsqact n_{[1]}\big\},
  \\[2pt]
  \big\{
f'(n)_{(0)}f'(n)_{(-1)}f''(n) 
  %\big(f'(n_{(0)}) \gve(f''(n_{(0)}) ) \big) n_{(-1)}
\mapsfrom n \big\}
&\!\!\!\mapsfrom&\!\!\! f,
      \end{array}
    \end{equation}
    \end{small}
    where the right $U$-action $\!\!\mpsqact\!$ is the one defined in \eqref{wszw1}.
   \item
     Vice versa, for a pair $(M, \tau)$ in the centre $\cZ_{\ucomod^\op}(\ucomod)$,
     %the right $U$-action on $M$ defined by means of
defining
     \begin{equation}
        \label{ghettokaisers1}
mu  \coloneqq  (\tau^{-1}_\uhhu f_m)(u), \qquad \forall \ u \in U,
\end{equation}
     where $f_m \in \Hom^\uhhu(U, M \otimes_\ahha U)$ is given by $f_m(u) = m_{[0]} \otimes_\ahha m_{[1]} u$ for any $m \in M$, induces
a right $U$-action on $M$ and, in particular,
     the structure of an anti Yetter-Drinfel'd module on $M$.  
\item
Both preceding parts together amount to an equivalence
$$
\ayd \simeq  \cZ_{\ucomod^\op}(\ucomod) 
  $$
  of categories.
\item
  Imposing stability %on anti Yetter-Drinfel'd modules
on the respective objects
  leads to an equivalence
$$
  \sayd \simeq \cZ'_{\ucomod^\op}(\ucomod)
  $$
of subcategories.
\end{enumerate}
\end{theorem}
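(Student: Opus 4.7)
The plan is to mirror the structure of the proof of Theorem \ref{tegel1}, translating each module-side argument into its comodule-side analogue and using the aYD compatibilities from Definition \ref{chelabertaschen2}, in particular the reformulations \eqref{funkydesertbreaks1a} and \eqref{funkydesertbreaks2} which become available under the simultaneous presence of a left and a right Hopf structure. For part (i), I would first verify that for any $g \in \HOM^r(N,M)$ the formula \eqref{michigan2} produces a left $U$-comodule map $N \to M \otimes_\ahha \due U \lact{}$ with respect to the codiagonal coaction on the target; this is the computational heart of the well-definedness statement and is expected to emerge after unfolding the induced right coaction \eqref{soestrene} on $M$, applying \eqref{funkydesertbreaks2}, and combining with the compatibility \eqref{blumare1}, together with bialgebroid identities such as \eqref{Sch4} and \eqref{Tch3}. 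Next one checks that $\tau_\enne$ itself is a morphism of left $U$-comodules, by matching the coactions \eqref{coaction2} and \eqref{teams} on $\HOM^r(N,M)$ and $\HOM^\ell(N,M)$; the inverse in \eqref{michigan2} is a direct check relying on \eqref{Sch2} and \eqref{Sch5}, and naturality in $N$ is immediate from the formulae together with the colinearity of any morphism $\gs \colon N \to N'$.

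The principal obstacle is then the hexagon axiom \eqref{tarrega7}, which unfolds into a sizeable diagram assembled from $\phi$ (the Hom-tensor adjunction \eqref{leitz1}), $\psi$ (the intricate map \eqref{bademser}), $\gvt$ (the map \eqref{leitz2}), and $\tau$ at the various corners $\HOM^r(P \otimes_\ahha N, M)$, $\HOM^\ell(N, \HOM^r(P, M))$, $\HOM^r(P, \HOM^\ell(N, M))$, and so on. I plan to chase a generic $f$ around the hexagon in direct analogy with the honeycomb computation of Lemma \ref{funkyfunk} exploited in the proof of Theorem \ref{tegel1}, repeatedly invoking \eqref{funkydesertbreaks1}--\eqref{funkydesertbreaks2} as well as \eqref{blumare1}--\eqref{blumare2}, together with several Hopf identities from the appendix. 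The most delicate aspect will be to keep track simultaneously of the two Sweedler-like decorations, namely the square brackets $[+]/[-]$ of the right Hopf structure and the subscripts $\pm$ of the left one, and to verify well-definedness at every intermediate step modulo the Takeuchi subspace \eqref{takcom2}.

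For part (ii), the formula \eqref{ghettokaisers1} is essentially forced by requiring the construction of part (i), applied to $f_m$, to reproduce the given $\tau$. Unitality is read off by evaluating at $u = 1_\uhhu$; associativity $m(uv) = (mu)v$ will follow from naturality of $\tau$ along the comodule morphism $R_v \colon U \to U$ given by right multiplication, combined with the hexagon axiom just established. The matching of the underlying $A$-bimodule structures is a direct consequence of colinearity of $\tau_\uhhu$ with respect to the bimodule structure \eqref{kohinoor} on $\HOM^\ell(U, M)$. Finally, the aYD compatibility \eqref{funkydesertbreaks1} will arise by applying the colinearity of $\tau_\uhhu$, evaluating the coaction \eqref{teams} on $\tau_\uhhu^{-1}(f_m)$, and reading off the result after repeated use of \eqref{Sch2}--\eqref{Sch5} and of the right-Hopf identities \eqref{Tch3}--\eqref{Tch9}.

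Parts (iii) and (iv) then follow the template set by the corresponding parts of Theorem \ref{tegel1}. For (iii), a $U$-colinear, $U$-linear map $\gvf \colon M \to \tilde M$ intertwines the central structures via \eqref{michigan2} by direct substitution; conversely, any morphism of pairs in the centre is automatically $U$-linear by evaluating the intertwining condition on $f_m$ and invoking \eqref{ghettokaisers1}. That the constructions of (i) and (ii) are mutually inverse reduces to short computations exploiting naturality of $\tau$ along right multiplications $R_u$. For (iv), stability in the aYD sense, i.e., $m = m_{(0)} m_{(-1)}$, or equivalently $m = m_{[0]} m_{[1]}$ by \eqref{stableagain}, will be translated into the defining condition \eqref{subcategory} of $\cZ'_{\ucomod^\op}(\ucomod)$ by unpacking the adjunctions \eqref{coad1}--\eqref{coad2} and evaluating at $\id_\emme$, using the explicit form of $\tau_\emme$ in the special case $N = A$.
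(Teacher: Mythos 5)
Your outline for parts (i), (iii), and (iv) tracks the paper's proof closely and would work, but part (ii) has a genuine gap at its very first step: the element $f_m \in \Hom^\uhhu(U, M \otimes_\ahha \due U \lact {})$ defined by $f_m(u) = m_{[0]} \otimes_\ahha m_{[1]}u$ is \emph{not} an element of $\HOM^\ell(U,M)$, i.e., it is not left-rational (applying $\gl^\ell$ to it lands in $\Hom^\uhhu(U, U_\ract \otimes_\ahha (M \otimes_\ahha U))$ but not in the image of $\jmath$; in the Hopf algebra case one sees explicitly $\gl^\ell f_m(u) = S(u)m_{(-1)} \otimes_k m_{(0)}$, which lives in $\Hom_k(U, U\otimes_k M)$ but not in $U \otimes_k \Hom_k(U,M)$). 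Since the central structure $\tau^{-1}_\uhhu$ is a priori only defined on $\HOM^\ell(U,M)$, the definition $mu \coloneqq (\tau^{-1}_\uhhu f_m)(u)$ does not make sense until $\tau^{-1}$ has been extended. The paper circumvents this with the Fundamental Theorem of Comodules (every element of a comodule lies in a finitely generated subcomodule once $U_\ract$ is $A$-projective), writing $\Hom^\uhhu(N, M \otimes_\ahha \due U \lact {}) = \varprojlim \HOM^\ell(N_\iota,M)$ and $\Hom_\Aopp(N,M) = \varprojlim \HOM^r(N_\iota,M)$ over finitely generated subcomodules $N_\iota$, thereby extending $\tau^{-1}_\enne$ to the full Hom spaces while preserving its properties. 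Without this (or an equivalent device) your part (ii) cannot get off the ground.

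A second, smaller error in part (ii): right multiplication $R_v \colon U \to U$, $u \mapsto uv$, is \emph{not} a morphism of left $U$-comodules (one has $\gD(uv) = u_{(1)}v_{(1)} \otimes_\ahha u_{(2)}v_{(2)} \neq u_{(1)} \otimes_\ahha u_{(2)}v$ in general), so naturality of $\tau$ along $R_v$ is not available — this is precisely where the comodule case diverges from the module case of Theorem \ref{tegel1}, where $R_u$ \emph{is} $U$-linear. The paper instead uses naturality along the multiplication $\mu \colon U_\bract \otimes_\ahha \due U \lact {} \to U$, which is a morphism in $\ucomod$ for the codiagonal coaction, and combines it with the hexagon axiom evaluated at $U \otimes_\ahha U$, together with explicit identities for the induced right coactions on $f_m$ and on $\tau^{-1}_\uhhu f_m$. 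You do invoke the hexagon, so the spirit is right, but the naturality input must be rerouted through $\mu$. The remaining discrepancies (e.g., which appendix identities enter the inversion check in (i)) are cosmetic.
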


\begin{rem}
  Using that any $f \in \Hom^\uhhu(N, M \otimes_\ahha U)$ is colinear, we can rewrite
     \begin{equation}
  \label{michigan1}  
  \tau^{-1}_\enne f(n) = \big(f'(n_{(0)}) \gve(f''(n_{(0)}) ) \big) n_{(-1)}
  %f'(n)_{(0)}f'(n)_{(-1)}f''(n).
     \end{equation}
for the inverse of the central structure     instead of \eqref{michigan2},
which is sometimes more convenient to work with.
\end{rem}

\begin{proof}[Proof of Theorem \ref{tegel2}]
  (i):  We leave it to the reader to check that the two given maps in \eqref{michigan2} are well-defined (checking that $\tau_\enne g$ is so is somewhat laborious but very similar to the computations that follow below).
  That they are mutual inverses is in one direction almost immediate, whereas
  \begin{small}
\begin{eqnarray*}
 &&  \tau_\enne (\tau^{-1}_\enne f)(n)
\\
&\overset{\scriptscriptstyle{\eqref{michigan2}}}{=}&
\big((\tau^{-1}_\enne f)(n_{[0]})_{[0]}
  \otimes_\ahha  (\tau^{-1}_\enne f)(n_{[0]})_{[1]} \big) \mpsqact n_{[1]}
   \\
  &
\overset{\scriptscriptstyle{\eqref{michigan1}}}{=} 
&
\Big(\big(f'(n_{[0](0)}) \gve(f''(n_{[0](0)})) \lact n_{[0](-1)}   \big)_{[0]}
\otimes_\ahha  \big(f'(n_{[0](0)}) \gve(f''(n_{[0](0)})) \lact n_{[0](-1)}   \big)_{[1]}
\Big) \mpsqact n_{[1]}
    \\
  &
\overset{\scriptscriptstyle{\eqref{blumare1}, \eqref{funkydesertbreaks2}, \eqref{Tch5}}}{=} 
&
\Big(
f'(n_{(0)})_{[0]}
n_{(-1)[+](1)} 
\otimes_\ahha n_{(-1)[-](1)} 
t \gve(f''(n_{(0)}))
f'(n_{(0)})_{[1]}
n_{(-1)[+](2)}   
\Big) \mpsqact n_{(-1)[-](2)}
    \\
  &
\overset{\scriptscriptstyle{\eqref{wszw1}, \eqref{Tch2}, \eqref{Tch3}}}{=} 
&
f'(n_{(0)})_{[0]}
\otimes_\ahha 
t \gve(f''(n_{(0)}))
f'(n_{(0)})_{[1]}
n_{(-1)}   
    \\
  &
\overset{\scriptscriptstyle{}}{=} 
&
f'(n)_{(0)[0]}
\otimes_\ahha 
t \gve(f''(n)_{(2)})
f'(n)_{(0)[1]}
f'(n)_{(-1)}
f''(n)_{(1)}
\\
&
\overset{\scriptscriptstyle{\eqref{blumare2}, \eqref{Sch7}}}{=} 
&
f'(n)_{[0]}
\otimes_\ahha 
t \gve(f''(n)_{(2)})
s \gve(f'(n)_{[1]})
f''(n)_{(1)}
\\
&
\overset{\scriptscriptstyle{}}{=} 
&
f'(n) \otimes_\ahha f''(n)
=
f(n)
\end{eqnarray*}
  \end{small}
  for any $n \in N$
  proves the other direction, using left $U$-colinearity of $f$ in the fifth step and the aYD condition \eqref{funkydesertbreaks2} in the third, plus the fact that all four $A$-actions on $U$ as defined in \eqref{pergolesi} commute.

  Next, let us check that
  $\tau_\enne$, or rather its inverse, is natural in $N$: for any left $U$-comodule morphism $\gs \colon  N \to N'$ we want to see that
$\tau^{-1}_{\scriptscriptstyle{N}} \circ \HOM^r(\gs, M) = \HOM^\ell(\gs, M) \circ \tau^{-1}_{\enne'}$.
  %$\tau_{N'} (f \circ \gs) = \tau_{N}(f) \circ \gs$.
  Indeed, by left $U$-colinearity of $\gs$, 
  \begin{equation}
    \label{lebkuchen}
\begin{split}
    \tau^{-1}_\enne (f \circ \gs)(n)
    &=  \big(f'(\gs(n_{(0)})) \gve(f''(\gs(n_{(0)})) ) \big) n_{(-1)}
    \\
  &=  \big(f'(\gs(n)_{(0)})) \gve(f''(\gs(n)_{(0)})) ) \big) \gs(n)_{(-1)}
  = (\tau^{-1}_{\scriptscriptstyle N'} f) (\gs(n)), 
\end{split}
\end{equation}
for any $f \in \HOM^\ell(N', M)$, hence the claim.

Furthermore, we need to prove that  $\tau_\enne$ (we prefer $\tau^{-1}_\enne$) is itself a left $U$-comodule morphism, that is, 
  $
\gl^r \tau^{-1}_\enne = (\id \otimes \tau^{-1}_\enne)\gl^\ell.
  $
As a matter of fact, one has for any $f \in \HOM^\ell(N,M)$:
\begin{small}
\begin{eqnarray*}
 &&  (\gl^r \tau^{-1}_\enne f)(n)
\\
  &\overset{\scriptscriptstyle{\eqref{coaction2}}}{=}& (\tau^{-1}_\enne f)(n_{[0]})_{(-1)} n_{[1]}
  \otimes_\ahha  (\tau^{-1}_\enne f)(n_{[0]})_{(0)} 
   \\
  &
\overset{\scriptscriptstyle{\eqref{michigan1}}}{=} 
&
\Big(f'(n_{[0](0)}) \gve\big( f''(n_{[0](0)}) \big) n_{[0](-1)} \Big)_{(-1)} n_{[1]}
  \otimes_\ahha  \Big(f'(n_{[0](0)}) \gve\big( f''(n_{[0](0)}) \big) n_{[0](-1)} \Big)_{(0)}
    \\
  &
\overset{\scriptscriptstyle{\eqref{blumare1}, \eqref{funkydesertbreaks1}, \eqref{taklinco}}}{=} 
&
n_{(-1)[+]-}
f'(n_{(0)})_{(-1)} \Big(\gve\big( f''(n_{(0)}) \big) \lact n_{(-1)[+]+(1)} n_{(-1)[-]} \Big)
\otimes_\ahha  f'(n_{(0)})_{(0)} n_{(-1)[+]+(2)} 
    \\
  &
\overset{\scriptscriptstyle{\eqref{mampf1}, \eqref{Tch2}}}{=} 
&
n_{(-1)-}
f'(n_{(0)})_{(-1)} \bract \gve\big( f''(n_{(0)}) \big) \otimes_\ahha  f'(n_{(0)})_{(0)} n_{(-1)+} 
    \\
  &
\overset{\scriptscriptstyle{}}{=} 
&
f''(n)_{(1)-} f'(n)_{(-2)-} f'(n)_{(-1)}
\bract \gve\big( f''(n)_{(2)} \big) \otimes_\ahha  f'(n)_{(0)} f'(n)_{(-2)+} f''(n)_{(1)+} 
\\
&
\overset{\scriptscriptstyle{\eqref{Sch3}, \eqref{Sch9}}}{=} 
&
f''(n)_{-} \otimes_\ahha  f'(n)_{(0)} f'(n)_{(-1)} f''(n)_{+} 
\\
&
\overset{\scriptscriptstyle{\eqref{Sch4}, \eqref{takcom}}}{=} 
&
f''(n)_{(2)-} \otimes_\ahha  \big(f'(n)_{(0)} \gve(f''(n)_{(2)+}) \big) f'(n)_{(-1)} f''(n)_{(1)} 
\\
&
\overset{\scriptscriptstyle{}}{=} 
&
f''(n_{(0)})_{-} \otimes_\ahha  \big(f'(n_{(0)}) \gve(f''(n_{(0)})_{+}) \big) n_{(-1)}
\\
&
\overset{\scriptscriptstyle{\eqref{teams}}}{=} 
&
f_{(-1)} \otimes_\ahha  \big(f'_{(0)}(n_{(0)}) \gve(f''_{(0)}(n_{(0)})) \big) n_{(-1)}
\\
&
\overset{\scriptscriptstyle{\eqref{michigan1}}}{=} 
&
f_{(-1)} \otimes_\ahha  \tau^{-1}_\enne f_{(0)}(n)
\\
&
\overset{\scriptscriptstyle{}}{=} 
&
(\id \otimes_\ahha  \tau^{-1}_\enne) \gl^\ell f (n),
\end{eqnarray*}
  \end{small}
as desired, where we used the left $U$-colinearity of $f$ in the fifth and in the eighth step again, along with the aYD condition \eqref{funkydesertbreaks1} in the third.

We still need to prove the hexagon axiom \eqref{tarrega7}. For better readability, let us write down what this means on the level of $k$-modules:
\begin{small}
\begin{equation}
\label{tarrega8}
\xymatrix@C=.37cm{
  \Hom_\Aopp(P, \Hom^\uhhu(N, M \otimes_\ahha U))
  \ar@{<-}[rr]_{\gvt_{\scriptscriptstyle P,M, N}}
\ar@{<-}[d]_{\Hom_\Aopp(P, \tau_\enne) \, }
& &
\Hom^\uhhu(N, \Hom_\Aopp(P,M) \otimes_\ahha U)
\ar[d]^{\, \Hom^\uhhu(N, \tau_\pehhe \, \otimes_\ahha U)}
&
  \\
  \Hom_\Aopp(P, \Hom_\Aopp(N, M))
  \ar@{<-}[d]_{\phi_{\scriptscriptstyle P, N, M}}
  & &
  \Hom^\uhhu(N, \Hom^\uhhu(P,M \otimes_\ahha U) \otimes_\ahha U)
  \ar@{<-}[d]^{\quad \psi_{\scriptscriptstyle M, P, N}}
  \\
  \Hom_\Aopp(P \otimes_\ahha N, M)
  \ar[rr]_{\tau_{P \otimes N}}
  & &
    \Hom^\uhhu(P \otimes_\ahha N, M \otimes_\ahha U)
  }
\end{equation}
\end{small}
Verifying that the above diagram \eqref{tarrega8} commutes with respect to the central structure \eqref{michigan2} is essentially straightforward: by abuse of notation, let us abbreviate 
$\gvt = \gvt_{\scriptscriptstyle P,N,M}$ and likewise for $\phi$ and $\psi$,
%$\phi = \phi_{\scriptscriptstyle P,N,M}$, $\psi = \psi_{\scriptscriptstyle M, P,N}$,
along with
$\tau_\enne = \Hom_\Aopp(P, \tau_\enne)$, and $\tau_\pehhe = \Hom^\uhhu(N, \tau_\pehhe \otimes_\ahha U)$. We then have for
$f \in \Hom_\Aopp(P \otimes_\ahha N, M)$:
\begin{small}
\begin{eqnarray*}
  &&
  (\psi^{-1} \circ \tau_\pehhe  \circ \gvt^{-1} \circ \tau_\enne \circ \phi \circ f)(p \otimes_\ahha n)
\\
&\overset{\scriptscriptstyle{\eqref{bademser}}}{=}&
(\tau_\pehhe  \circ \gvt^{-1} \circ \tau_\enne \circ \phi \circ f)'(n)'(p)
\\
&& \qquad \qquad 
\otimes_\ahha
\gve\big((\tau_\pehhe  \circ \gvt^{-1} \circ \tau_\enne \circ \phi \circ f)'(n)''(p)\big) \lact
  (\tau_\pehhe  \circ \gvt^{-1} \circ \tau_\enne \circ \phi \circ f)''(n)
   \\
  &
\overset{\scriptscriptstyle{\eqref{michigan2}, \eqref{wszw1}}}{=} 
&
(\gvt^{-1} \circ \tau_\enne \circ \phi \circ f)'(n)
(p_{[0]})_{[0]} p_{[1][+]}
  \bract \gve\big(p_{[1][-]}(\gvt^{-1} \circ \tau_\enne \circ \phi \circ
f)'(n)(p_{[0]})_{[1]}\big) 
\\
 && \qquad \qquad 
 \otimes_\ahha
 (\gvt^{-1} \circ \tau_\enne \circ \phi \circ f)''(n)
    \\
  &
\overset{\scriptscriptstyle{\eqref{Tch8}}}{=} 
&
(\gvt^{-1} \circ \tau_\enne \circ \phi \circ f)'(n)
(p_{[0]}) p_{[1]}
 \otimes_\ahha
 (\gvt^{-1} \circ \tau_\enne \circ \phi \circ f)''(n)
    \\
  &
\overset{\scriptscriptstyle{\eqref{leitz2a}}}{=} 
&
(\tau_\enne \circ \phi \circ f)(p_{[0]})'(n) \gve(p_{[1][+]}) p_{[2]}
 \otimes_\ahha
p_{[1][-]} (\tau_\enne \circ \phi \circ f)(p_{[0]})''(n)
    \\
  &
\overset{\scriptscriptstyle{\eqref{Tch4}, \eqref{wszw1}}}{=} 
&
\pig((\tau_\enne \circ \phi \circ f)(p_{[0]})'(n) \otimes_\ahha
(\tau_\enne \circ \phi \circ f)(p_{[0]})''(n) \pig) \mpsqact p_{[1]}
\\
&
\overset{\scriptscriptstyle{\eqref{michigan2}}}{=} 
&
\pig((\phi \circ f)(p_{[0]})(n_{[0]})_{[0]} \otimes_\ahha
(\phi \circ f)(p_{[0]})(n_{[0]}) \pig) \mpsqact (n_{[1]} p_{[1]})
\\
&
\overset{\scriptscriptstyle{\eqref{leitz1}}}{=} 
&
\pig(f(p_{[0]} \otimes_\ahha n_{[0]})_{[0]} \otimes_\ahha
f(p_{[0]} \otimes_\ahha n_{[0]})_{[1]}
\pig) \mpsqact (n_{[1]} p_{[1]})
\\
&
\overset{\scriptscriptstyle{\eqref{cosmicflux}, \eqref{michigan2}}}{=} 
&
\tau_{\scriptscriptstyle P \otimes_\ahha N} f (p \otimes_\ahha n)
\end{eqnarray*}
\end{small}
for any $p \otimes_\ahha n \in P \otimes_\ahha N$, which proves the commutativity of diagram \eqref{tarrega8} and concludes the proof of this part.

(ii):
Let  $(M, \tau) \in \cZ_{\ucomod^\op}(\ucomod)$
be an object in the bimodule centre.
For any $m \in M$, define $f_m \in \Hom^\uhhu(U, M \otimes_\ahha \due U \lact {})$ by
\begin{equation}
  \label{pennstate}
f_m(u)
 \coloneqq  m_{[0]} \otimes_\ahha m_{[1]}u,
\end{equation}
where the right coaction on the left comodule $M$ is (as always) the induced one \eqref{soestrene}.
The left $U$-colinearity of $f_m$ is a simple check.
However, $f_m$ is not an element in $\HOM^\ell(U,M)$:
formally applying $\gl^\ell$ to $f_m$ produces
$\gl^\ell f_m(u) = u_- m_{[1]-} \otimes_\ahha (m_{[0]} \otimes_\ahha m_{[1]+} u_+)
 = u_- m_{(-1)} \otimes_\ahha f_{m_{(0)}}(u_+)
$
with the help of \eqref{blumare2}, 
which is an element in 
$
\Hom^\uhhu(N, U_\ract \otimes_\ahha \due {(M \otimes_\ahha \due U \lact {})} \blact {} \!),
$
rather than in its subspace
$
U_\ract \otimes_\ahha  \Hom^\uhhu(N,M \otimes_\ahha \due U \lact {}).
$
This can be made even more explicit by considering the special case when $U$  is a Hopf algebra over $A=k$: here, simplifying the $k$-module underlying the left internal Homs by means of the isomorphism $\nu$ from Remark \ref{relajante2}, the map \eqref{pennstate} becomes $f_m(u) = \gve(u)m$ and using the formula for $\gl^\ell$ for this case as given in Remark \ref{castelnuovo}, one obtains $\gl^\ell f_m(u) = S(u) m_{(-1)} \otimes_k m_{(0)}$, hence $\gl^\ell f_m$ manifestly lives in $\Hom_k(U, U \otimes_k M)$ but not in $U \otimes_k \Hom_k(U, M)$. 
%
%
%% This is most clearly seen by considering the isomorphisms
%% $
%% U_\ract \otimes_\ahha  \Hom^\uhhu(U,M \otimes_\ahha \due U \lact {}) \to
%%  U_\ract \otimes_\ahha  \Hom_\ahha(U,M), \ u \otimes_A g \mapsto g''(\cdot) u \otimes_\ahha g'(\cdot)
%% $
%%  as well as
%%  $
%%  e \colon
%%  \Hom^\uhhu(U, U_\ract \otimes_\ahha \due {(M \otimes_\ahha \due U \lact {})} \blact {}\!)
%%  \to
%%  \Hom_\ahha(U, U_\ract \otimes_\ahha M)
%%    \ f \mapsto f'''(\cdot)f'(\cdot) \otimes_\ahha f''(\cdot),
%%  $
%%  which are obviously compatible with the respective injections 
%% $
%%  \jmath.
%% $
%% Applying \eqref{teams}, we see that
%% $
%% \gl^\ell f_m(u) = u_- m_{[1]-} \otimes_\ahha (m_{[0]} \otimes_\ahha m_{[1]+} u_+)
%% = u_- m_{(-1)} \otimes_\ahha f_{m_{(0)}}(u_+)
%% $
%% with the help of \eqref{blumare2}, and
%% therefore
%% $
%% e \gl^\ell f_m(u) =
%% s(\gve(m_{[1]})) \otimes_\ahha \gve(u) \lact m_{[0]},
%% $
%% which is manifestly not in $ U_\ract \otimes_\ahha  \Hom_\ahha(U,M)$ but in
%% $\Hom_\ahha(U, U_\ract \otimes_\ahha M)$ instead. Hence, going back via $e$, the map 
%% $f_m$ is not an element in $\HOM^\ell(U,M)$.

Nevertheless, to $f_m$ we can still formally apply %the inverse 
$
\tau^{-1}_\uhhu
$
if seen as a map 
$
\Hom^\uhhu(U, M \otimes_\ahha U) \to \Hom_\Aopp(U,M),
$
but not if seen as a map
$
\HOM^\ell(U,M)
     \to
     \HOM^r(U,M),
$
and hence we cannot directly benefit from the properties of central structures
as in  Definition \ref{wisconsin}, which will be needed to prove that Eq.~\eqref{ghettokaisers1} induces a right $U$-action on $M$.
By a standard argument, as in \cite[p.~479]{Sha:OTAYDMCC}, 
this problem is circumvented as follows:
     in general, if $N$ were a finitely $A$-generated comodule, then obviously $\HOM^\ell(N,M) = \Hom^\uhhu(N, M \otimes_\ahha \due U \lact {})$ as comodules. By what is sometimes called the {\em Fundamental Theorem of Comodules} \cite[Thm.~2.1.7]{DasNasRai:HAAI}, every element of a comodule over a $k$-coalgebra (where $k$ is a field) is contained in a finite-dimensional subcomodule. This result can be extended to bialgebroids (or general $A$-corings for that matter) as soon as $U_\ract$ is $A$-projective, which follows from \cite[Cor.~2.7 \& Prop.~2.8]{KaoGomLob:SC}. Hence,
$$
\Hom^\uhhu(N, M \otimes_\ahha \due U \lact {}) = \varprojlim%_\iota
\HOM^\ell(N_\iota, M),
$$
where the $N_\iota$ are finitely generated left $U$-subcomodules, and similarly
$$
\Hom_\Aopp(N, M) = \varprojlim%_\iota
\HOM^r(N_\iota, M).
$$
This induces a map $\Hom^\uhhu(N, M \otimes_\ahha \due U \lact {}) \to \Hom_\Aopp(N, M)$ with the same properties as $\tau^{-1}_\enne$ and will therefore, by slight abuse of notation, be denoted by the same symbol. In case $N = U$, this is the map used to define the right $U$-action \eqref{ghettokaisers1} on $M$,
that is,
\begin{equation}
\label{ghettokaisers2}
mu  \coloneqq  (\tau^{-1}_\uhhu f_m)(u), \qquad u \in U, \ m \in M.
\end{equation}
Let us show that this, in fact, defines an action with respect to which the left $U$-comodule $M$ becomes an aYD module: more precisely, we will {\em first} prove that the what-is-going-to-be action \eqref{ghettokaisers2} 
is compatible with the left $U$-coaction on $M$ in the sense of the aYD condition \eqref{funkydesertbreaks1}, or, equivalently, \eqref{funkydesertbreaks1a}. To this end, note that considering $U$ as a left $U$-comodule via the coproduct, the corresponding right coaction obtained from Eq.~\eqref{soestrene} reads
\begin{equation}
  \label{basculer}
  u_{[0]} \otimes_\ahha u_{[1]}  \coloneqq  u_{[+]} \otimes_\ahha u_{[-]}.
 \end{equation}
Moreover, if $\tau$ is a central structure, by definition $\tau^{-1}_\uhhu$ is a left $U$-comodule isomorphism $\Hom^\uhhu(U, M \otimes_\ahha \due U \lact {}) \to \Hom_\Aopp(U,M)$, and therefore satisfies
\begin{equation}
  \label{wald1}
  (\tau^{-1}_\uhhu f)(u_{[+]})_{(-1)} u_{[-]} \otimes_\ahha (\tau^{-1}_\uhhu f)(u_{[+]})_{(0)}
  = f_{(-1)}  \otimes_\ahha (\tau^{-1}_\uhhu f_{(0)})(u)
\end{equation}
with respect to the left $U$-coaction \eqref{teams} on $\Hom^\uhhu(U, M \otimes_\ahha \due U \lact {})$.
Applying this to $f_m$ from \eqref{pennstate} and considering that
\begin{equation}
  \label{wald2}
        {(f_m)}_{(-1)}  \otimes_\ahha {(f_m)}_{(0)}(u) = u_- m_{(-1)} \otimes_\ahha (m_{(0)[0]} \otimes_\ahha m_{(0)[1]} u_+),
        \end{equation}
 as derived from \eqref{teams} and \eqref{blumare2}, we have
for the right hand side in \eqref{wald1}
 %for all $m \in M$ and $u \in U$,
\begin{small}
\begin{equation}
  \label{wald3}
(\tau^{-1}_\uhhu f_m)(u_{[+]})_{(-1)} u_{[-]} \otimes_\ahha (\tau^{-1}_\uhhu f_m)(u_{[+]})_{(0)} = (mu_{[+]})_{(-1)} u_{[-]} \otimes_\ahha (mu_{[+]})_{(0)},
\end{equation}
\end{small}
 whereas for the left hand side in \eqref{wald1}:
 \begin{equation*}
   \begin{split}
(f_m)_{(-1)}  \otimes_\ahha \tau^{-1}_\uhhu {(f_m)}_{(0)}(u)
&= u_- m_{(-1)} \otimes_\ahha m_{(0)[0](0)} m_{(0)[0](-1)}
     m_{(0)[1]} u_+
\\
  &   =  u_- m_{(-1)} \otimes_\ahha m_{(0)} u_+,
   \end{split}
   \end{equation*}
 with the help of Eq.~\eqref{blumare1}. Hence, \eqref{wald1} implies \eqref{funkydesertbreaks1a} and therefore the aYD condition \eqref{funkydesertbreaks1}, as desired.
 
 To conclude this part, let us show that Eq.~\eqref{ghettokaisers1} resp.~\eqref{ghettokaisers2} effectively defines a right $U$-action, {\em i.e.},
that for any $u, v \in U$
 \begin{equation}
   \label{teich}
   (mu)v
   =
\tau^{-1}_\uhhu f_{\tau^{-1}_\uhhu f_m(u)} (v)
   =
   (\tau^{-1}_\uhhu f_m)(uv)   
   =
   m(uv)
 \end{equation}
holds. 
 To this end,  first note that the right $U$-coaction induced by \eqref{soestrene} on the left $U$-comodule $\Hom^\uhhu(U, M \otimes_\ahha U)$ explicitly reads for the element $f_m$ as follows:
 \begin{equation}
   \label{wald4}
         {(f_m)}_{[0]}(u)  \otimes_\ahha {(f_m)}_{[1]} = (m_{[0]} \otimes_\ahha m_{[1](1)} u_{(1)})
\otimes_\ahha m_{[1](2)} u_{(2)}
         ,
 \end{equation}
 as seen directly by Eqs.~\eqref{wald2}, \eqref{soestrene}, \eqref{blumare2}, and \eqref{Sch8}, whereas in the same spirit Eq.~\eqref{wald2} also implies
 $$
 (\tau^{-1}_\uhhu f_m)(u)_{(-1)} \otimes_\ahha (\tau^{-1}_\uhhu f_m)(u)_{(0)} = (f_m)_{(-1)} u_{(1)}  \otimes_\ahha \tau^{-1}_\uhhu (f_m)_{(0)}(u_{(2)}).
 $$
 by Eqs.~\eqref{Tch2} and \eqref{Tch4},
and therefrom the expression for the right coaction
  \begin{equation}
    \label{wald5}
    \begin{split}
   (&\tau^{-1}_\uhhu f_m)_{[0]}(u) \otimes_\ahha (\tau^{-1}_\uhhu f_m)(u)_{[1]}
\\
      &=
      \tau^{-1}_\uhhu (f_m)_{[0]}(u_{[+]}) \otimes_\ahha u_{[-]} (f_m)_{[1]}
      \\
      &=
      \tau^{-1}_\uhhu \big(m_{[0]} \otimes_\ahha m_{[1](1)}\sma\cdot \big)(u_{[+](1)}) \otimes_\ahha
      u_{[-]} m_{[1](2)} u_{[+](2)}, 
\end{split}
    \end{equation}
  on the element $\tau^{-1}_\uhhu f_m$ in the sense of \eqref{soestrene} again, where Eqs.~\eqref{wald4} \& \eqref{Tch4} were used. Proving the associativity \eqref{teich} now essentially hinges on the fact that if $\tau$ is a central structure, it makes the diagram \eqref{tarrega8} (resp.~\eqref{tarrega7}) commute and is natural: the multiplication $\mu \colon  U_\bract \otimes_\ahha \due U \lact {} \to U$, $u \otimes_\ahha v \mapsto uv$ by the bialgebroid properties is a morphism in $\ucomod$, and hence by \eqref{lebkuchen} we have  $\big(\tau^{-1}_{\scriptscriptstyle{U \otimes_A U}} (f \circ \mu)\big)(u \otimes_\ahha v)  = (\tau^{-1}_{\uhhu} f)(uv)$ for any $f \in \Hom^\uhhu(U, M \otimes_\ahha \due U \lact {})$, which we are going to exploit in the penultimate step of the following computation:  
%  which we can in particular apply to the element $f_m \in \Hom^\uhhu(U \otimes_\ahha U, M \otimes_\ahha U)$ defined, by slight abuse of notation, for any $m \in M$ as $f_m: u \otimes_\ahha v \mapsto m_{[0]} \otimes_\ahha m_{[1]} uv$:
\begin{small}
\begin{eqnarray*}
&&
  (mu)v
  \\
  &
  \overset{\scriptscriptstyle{\eqref{ghettokaisers2}}}{=} 
  &
  \tau^{-1}_\uhhu f_{\tau^{-1}_\uhhu f_m(u)} (v)
  \\
&\overset{\scriptscriptstyle{\eqref{ghettokaisers2}}}{=}&
  \tau^{-1}_\uhhu \big( \tau^{-1}_\uhhu f_m(u)_{[0]} \otimes_\ahha \tau^{-1}_\uhhu f_m(u)_{[1]} \sma\cdot 
  \big) (v)
   \\
  &
\overset{\scriptscriptstyle{\eqref{wald5}}}{=} 
&
\tau^{-1}_\uhhu\big(\tau^{-1}_\uhhu \big(m_{[0]} \otimes_\ahha m_{[1](1)}\sma\cdot \big)(u_{[+](1)}) \otimes_\ahha
      u_{[-]} m_{[1](2)} u_{[+](2)} \big)(v)
    \\
  &
\overset{\scriptscriptstyle{\eqref{below}}}{=} 
&
\tau^{-1}_\uhhu\big((\Hom^\uhhu(U, \tau^{-1}_\uhhu \otimes_\ahha U)
\circ \psi \circ f_m)'\sma\cdot (u_{[+]})
\\
&& \qquad \qquad \qquad \qquad \qquad \qquad
\otimes_\ahha u_{[-]} (\Hom^\uhhu(U, \tau^{-1}_\uhhu \otimes_\ahha U)
\circ \psi \circ f_m)''\sma\cdot 
\big)(v)
    \\
  &
\overset{\scriptscriptstyle{\eqref{leitz2}}}{=} 
&
\tau^{-1}_\uhhu\big((\gvt \circ \Hom^\uhhu(U, \tau^{-1}_\uhhu \otimes_\ahha U) \circ \psi \circ f_m)(u)\big)(v)
\\
&
\overset{\scriptscriptstyle{\eqref{leitz1}}}{=} 
&
(\phi^{-1} \circ
\Hom_\Aopp(U, \tau^{-1}_\uhhu)
\circ \gvt \circ
\Hom^\uhhu(U, \tau^{-1}_\uhhu \otimes_\ahha U)
\circ \psi \circ f_m \circ \mu)(u \otimes_\ahha v)
\\
&
\overset{\scriptscriptstyle{\eqref{tarrega8}}}{=} 
&
   (\tau^{-1}_{\scriptscriptstyle{U \otimes_A U}} (f_m \circ \mu))(u \otimes_\ahha v)   
\\
&
\overset{\scriptscriptstyle{\eqref{lebkuchen}}}{=} 
&
   (\tau^{-1}_{\uhhu} f_m)(\mu(u \otimes_\ahha v))   
\\
&
\overset{\scriptscriptstyle{\eqref{ghettokaisers2}}}{=} 
&
   m(uv),
\end{eqnarray*}
\end{small}
as claimed.
%. Here, for the sake of (relative) readibility we used analogous abbreviations for the morphisms as was done for the proof of diagram \eqref{tarrega8},
Here, in the fourth step we additionally needed the fact that
\begin{equation}
  \label{below}
  \begin{split}
  (&\psi f_m)'(v)'(u) \otimes_\ahha (\psi f_m)'(v)''(u) \otimes_\ahha (\psi f_m)''(v)
\\
&= f_m'( u \otimes_\ahha v_{[0]}) \otimes_\ahha f_m''( u \otimes_\ahha v_{[0]})_{(1)} v_{[1]}
\otimes_\ahha f_m''( u \otimes_\ahha v_{[0]})_{(2)}
\\
&=
m_{[0]}
\otimes_\ahha m_{[1](1)} u_{(1)} v_{[+](1)} v_{[-]} \otimes_\ahha m_{[1](2)} u_{(2)} v_{[+](2)}
\\
&=
m_{[0]}
\otimes_\ahha m_{[1](1)} u_{(1)} \otimes_\ahha m_{[1](2)} u_{(2)},
\end{split}
  \end{equation}
as results from Eqs.~\eqref{basculer} and \eqref{Tch2}.

The unitality of the so-defined action once again follows from the naturality \eqref{lebkuchen}: for $N= A$, the source map $s\colon A \to U$ is a morphism in $\ucomod$ as well and therefore
$
\tau^{-1}_\ahha(f \circ s)(a) = (\tau^{-1}_\uhhu f) (s(a))
$
for $f \in \HOM^\ell(U, M)$. Hence, 
$$
m1_\uhhu = (\tau^{-1}_\uhhu f_m)(s(1_\ahha)) = (\tau^{-1}_\ahha (f_m \circ s))(1_\ahha) = m1_\ahha = m,
$$
taking into consideration that $\tau^{-1}_\ahha \colon  \HOM^\ell(A, M) \simeq M
\to \HOM^r(A, M) \simeq M$~is the identity map along with the unitality of the source map, plus the fact that $f_m \circ s$ under the isomorphism $\Hom^\uhhu(A, M \otimes_\ahha\due U \lact {}) \simeq \Hom_\Aopp(A, M) \simeq M$ becomes the map $L_m \colon a \mapsto ma$.

 (iii):
Here, we need to verify two things: first,
that any morphism $M \to \tilde M$ of aYD modules induces a morphism $(M, \tau) \to (\tilde M, \tilde\tau)$ between the corresponding objects in the bimodule centre (and vice versa); second, that the two procedures of how to obtain a central structure from a right $U$-action and a right $U$-action from a central structure are mutually inverse.

As for the first issue, if $\gvf \colon M \to \tilde M$ is a morphism of aYD modules,
 we have to show that for any $N \in \ucomod$ the diagram
\begin{small}
\begin{equation}
\label{verite2}
\xymatrix{
\Hom^\uhhu(N, M \otimes_\ahha U)
  \ar@{<-}[r]^-{\tau_N}
\ar[d]_-{\Hom^\uhhu(N, \gvf \, \otimes_\ahha U) \, }
& 
\Hom_\Aopp(N,M) 
\ar[d]^-{\, \Hom_\Aopp(N, \gvf)}
  \\
  \Hom^\uhhu(N, \tilde M \otimes_\ahha U)
  \ar@{<-}[r]_-{\tilde\tau_N}
  & 
  \Hom_\Aopp(N, \tilde M)
  }
\end{equation}
\end{small}
 commutes, which we, for convenience, will show with respect to $\tau^{-1}$.  
 Indeed, let $n \in N$ and $f \in \Hom^\uhhu(N, M \otimes_\ahha U)$. Then
\begin{small}
  \begin{equation*}
    \begin{split}
 \gvf\big(\tau^{-1}_N f(n) \big)
 &= \gvf\big(f'(n)_{(0)}f'(n)_{(-1)}f''(n) \big)
= (\gvf \circ f')(n)_{(0)} (\gvf \circ f')(n)_{(-1)} f''(n)
\\
& = \tilde\tau^{-1}_N\big((\gvf \circ f') \otimes_\ahha f''\big)(n) 
\\
&= \tilde\tau^{-1}_N \big( \Hom^\uhhu(N, \gvf \otimes_\ahha U) \circ f\big)(n)
 \end{split}
 \end{equation*}
\end{small}
 since $\gvf$ is in particular a morphism of right $U$-modules and left $U$-comodules.

 Vice versa, let $\gvf \colon (M, \tau) \to (\tilde M, \tilde\tau)$ be a morphism of objects in the centre $\cZ_{\ucomod^\op}(\ucomod)$; this, in particular, means that $\gvf$ is a left $U$-comodule map and that the diagram \eqref{verite2} commutes. In order to define a morphism of aYD modules, it suffices to show that $\gvf$ is a right $U$-module morphism as well. To start with, observe that if $\gvf$ is a left $U$-comodule map, one has for $m \in M$
\begin{small}
 \begin{equation*}
   \begin{split}
     \gvf(m_{[0]}) \otimes_\ahha m_{[1]}
     &= \gvf(\gve(m_{(-1)[+]}) m_{(0)}) \otimes_\ahha m_{(-1)[-]} 
\\
&= \gve(m_{(-1)[+]}) \gvf(m_{(0)}) \otimes_\ahha m_{(-1)[-]}
\\
&= \gve(\gvf(m)_{(-1)[+]}) \gvf(m)_{(0)} \otimes_\ahha \gvf(m)_{(-1)[-]}
=      \gvf(m)_{[0]} \otimes_\ahha \gvf(m)_{[1]},
   \end{split}
 \end{equation*}
 \end{small}
that is, it is also a right $U$-comodule morphism with respect to the right coaction \eqref{soestrene}. Applying then diagram \eqref{verite2} to the case $N = U$, we obtain 
\begin{small}
 \begin{equation*}
   \begin{split}
     \gvf(mu) &= \gvf\big(\tau^{-1}_\uhhu f_m(u)\big)
    \\
     &
     =
     \tilde\tau^{-1}_\uhhu \big( \Hom^\uhhu(U, \gvf  \otimes_\ahha U) \circ f_m \big)(u)
     \\
     &
     =
     \tilde\tau^{-1}_\uhhu \big( \gvf(m_{[0]}) \otimes_\ahha m_{[1]}\sma\cdot  \big)(u)
      \\
     &
     =
      \tilde\tau^{-1}_\uhhu \big( \gvf(m)_{[0]} \otimes_\ahha \gvf(m)_{[1]}\sma\cdot  \big)(u)
            \\
     &=
            \tilde\tau^{-1}_\uhhu f_{\gvf(m)}(u)
                       \\
     &=
     \gvf(m)u
  \end{split}
 \end{equation*}
\end{small}
for any $u \in U$.
Hence, $\gvf$ is a also a morphism of right $U$-modules. 

Second, and finally, we have to show that obtaining a central structure from a right $U$-action and a right $U$-action from a central structure are mutually inverse procedures. Indeed, if a right $U$-action $m \otimes u \mapsto mu$ on $M \in \ucomod$ is given and a corresponding central structure $\tau$ is defined by means of Eq.~\eqref{michigan2}, which in turn defines a right $U$-action as in Eq.~\eqref{ghettokaisers1}, we have
$$
(\tau^{-1}_\uhhu f_m)(u) = m_{[0](0)} m_{[0](-1)} m_{[1]} u = mu,
$$
with the help of Eq.~\eqref{blumare1} and \eqref{Tch7}, which is just the right $U$-action that we started with.
Vice versa, given a central structure $\tau$ that defines a right $U$-action as in \eqref{ghettokaisers1} that, in turn, defines a central structure as in \eqref{michigan2}, in a similar way reproduces the central structure $\tau$ we started with. To see this, assume that $\gs$ is the central structure defined by the action \eqref{ghettokaisers1}; we will show now that $\gs = \tau$. Indeed, for $g \in \Hom^\uhhu(N, M \otimes_\ahha \due U \lact {})$, one has, using \eqref{michigan2}
\begin{equation}
  \label{fueller0}
  \gs^{-1}_\enne g(n)
 % &
  =  \big(g'(n_{(0)})\gve(g''(n_{(0)}))\big) n_{(-1)} 
%  \\
 % &
  =  \tau^{-1}_\uhhu\big(f_{g'(n_{(0)})\gve(g''(n_{(0)}))} \big) (n_{(-1)}), 
%  \\
   \end{equation}
where $f_m \in \Hom^\uhhu(U, M \otimes_\ahha \due U \lact {})$ was, as before, the element defined in Eq.~\eqref{pennstate}.

Before we continue, note that any left $U$-coaction $\gl\colon N \to U_\ract \otimes_\ahha N$ on $N$ is a morphism in $\ucomod$ if $U_\ract \otimes_\ahha N$ is seen as a free left $U$-comodule, {\em i.e.}, ignoring the coaction on $N$ and only taking the coproduct on $U$ into account.
From the naturality of a central structure we obtain
\begin{equation}
  \label{fueller1}
\tau^{-1}_\enne(\tilde g \circ \gl) = \tau^{-1}_{\scriptscriptstyle{U \otimes_\ahha N}}(\tilde g) \circ \gl
\end{equation}
for any $\tilde g \in \Hom^\uhhu(U_\ract \otimes_\ahha N, M \otimes_\ahha \due U \lact {})$, along with
\begin{equation}
  \label{fueller2}
\tau^{-1}_{\scriptscriptstyle{U \otimes_\ahha N}} \tilde g(u \otimes_\ahha n) = \tau^{-1}_\uhhu f_{\tilde g'(u_{(2)} \otimes_\ahha n) \gve( \tilde g''(u_{(2)} \otimes_\ahha n))} (u_{(1)}).
\end{equation}
Set then $\tilde g  \coloneqq  \gve \otimes g$. Combining \eqref{fueller1} and \eqref{fueller2} and comparing the outcome
with Eq.~\eqref{fueller0}, we obtain:
\begin{equation*}
\begin{split}
  \tau^{-1}_\enne g(n)
  &= \tau^{-1}_\enne ((\gve \otimes g) \circ \gl)(n)
\\
&= \tau^{-1}_{\scriptscriptstyle{U \otimes_\ahha N}}(\gve \otimes g)(n_{(-1)} \otimes_\ahha n_{(0)})
\\
&= \tau^{-1}_\uhhu\big(f_{g'(n_{(0)})\gve(g''(n_{(0)}))} \big) (n_{(-1)}) 
%\\
%&
=
\gs_\enne g(n),
  \end{split}
  \end{equation*}
as desired.

(iv):
Here we only have to add to the preceding parts the proof that 
$
(M, \tau) \in \cZ'_{\ucomod^\op}(\ucomod)
$
if $M$ is stable as an aYD module in the sense of Definition \ref{chelabertaschen2}, and also, vice versa, that starting from such a central object
$
(M, \tau) \in \cZ'_{\ucomod^\op}(\ucomod),
$
one obtains
$m_{(0)} m_{(-1)} = m$, with respect to the action defined in \eqref{ghettokaisers2},
or equivalently $m_{[0]} m_{[1]} = m$, as seen in \eqref{stableagain}.

As for the first issue, suppressing the left and right unit constraints for $\mathbb{1} = A$, we have
\begin{equation*}
  \begin{array}{rcl}
&& (\zeta^{-1} \circ \Hom^\uhhu(A, \tau_\emme) \circ \xi \circ \id_\emme)(m)
\\
    & \overset{\scriptscriptstyle{\eqref{coad2a}}}{=} &
    (\Hom^\uhhu(A, \tau_\emme) \circ \xi \circ \id_\emme)(1_\ahha)'(m)
    \gve\pig((\Hom^\uhhu(A, \tau_\emme) \circ \xi \circ \id_\emme)(1_\ahha)''(m) \pig)
\\
    & \overset{\scriptscriptstyle{\eqref{michigan2}, \eqref{wszw1}, \eqref{Tch8}}}{=} &
    (\xi \circ \id_\emme)(1_\ahha)(m_{[0]})m_{[1]}
\\
& \overset{\scriptscriptstyle{\eqref{coad1}}}{=} &
m_{[0]}m_{[1]}
\\
& \overset{\scriptscriptstyle{\eqref{stableagain}}}{=} &
m
  \end{array}
\end{equation*}
for all $m \in M$,
hence $\zeta^{-1} \circ \Hom^\uhhu(A, \tau_\emme) \circ \xi \circ \id_\emme = \id_\emme$,
which was to prove.

Vice versa, note that if $\id_\emme \in \Hom^\uhhu(M,M)$ is mapped to itself by means of \eqref{subcategory}, then, by virtue of the adjunctions \eqref{coad1} and \eqref{coad2}, 
$
\tau^{-1}_\emme(\id_\emme \otimes_\ahha 1) = \id_\emme.
$
We can then argue as above Eq.~\eqref{fueller1}: the left coaction
$\gl \colon M \to U_\ract \otimes_\ahha M$ is a morphism in $\ucomod$ if $U_\ract \otimes_\ahha M$ is seen as a free left comodule. Define
$
\tilde g \in \Hom^\uhhu(U_\ract \otimes_\ahha M,  M \otimes_\ahha \due U \lact {})  
$
by $\tilde g = \gve \otimes_\ahha \id_\emme \otimes_\ahha 1$ and apply 
\eqref{fueller1} and \eqref{fueller2} to it, observing that
$(\tilde g \circ \gl)(m) = m \otimes_\ahha 1$ and $\tilde g'(u \otimes_\ahha n) \gve( \tilde g''(u \otimes_\ahha n)) = \gve(u)m$; that is, for any $m \in M$, we have
%\begin{small}
  \begin{equation*}
    \begin{array}{rcl}
m
&=& \tau^{-1}_\emme(\id_\emme \otimes_\ahha 1)(m)
= \tau^{-1}_\emme(\tilde g \circ \gl)(m)
\\
&
\stackrel{\scriptscriptstyle{\eqref{fueller1}}}{=}
&
\tau^{-1}_{\uhhu \otimes_\ahha \emme}(\tilde g)(m_{(-1)} \otimes_\ahha m_{(0)})
\\
&
\stackrel{\scriptscriptstyle{\eqref{fueller2}}}{=}
&
\tau^{-1}_{\uhhu}f_{\gve(m_{(-1)}) m_{(0)}}(m_{(-2)})
=
\tau^{-1}_{\uhhu}f_{m_{(0)}}(m_{(-1)})
\stackrel{\scriptscriptstyle{\eqref{ghettokaisers1}}}{=}
m_{(0)}m_{(-1)}, 
    \end{array}
    \end{equation*}
%\end{small}
using $f_{\gve(u)m}\sma\cdot  = f_{m}(\sma\cdot  \ract \gve(u))$, which results from \eqref{pennstate} with \eqref{taklinco2}. Hence, the aYD module $M$ defined in (ii) is stable if $(M, \tau) \in \cZ'_{\ucomod^\op}(\ucomod)$, which concludes the proof.
  \end{proof}

\begin{rem}
  Observe that comparing the situation for $\umod$ and aYD contramodules resp.\ $\ucomod$ and aYD modules is less symmetric than expected: whereas $\umod$ was biclosed in presence of one Hopf structure only (or actually none), this is apparently not the case for $\ucomod$, where left and right Hopf structures seem to be needed.
  %On the other hand, for defining a central structure for $\ucomod$ the stability of an aYD module in Theorem \ref{tegel2} was not needed, whereas for $\umod$ in Theorem \ref{tegel1} the stability of aYD contramodules immediately came into play not only when asking the central structure $\tau$ to be invertible (which could be weakened) but already when asking the hexagon axiom \eqref{tarrega81} to be fulfilled.
  \end{rem}

\subsection{Traces on $\ucomod$}
\label{internalflight2}

In a spirit analogous to what was done in \S\ref{internalflight1}, we can now state a dual version of 
Theorem \ref{ganzleerheute} for the comodule case. Lemma \ref{funkyfunk} along with Theorem \ref{tegel2} directly imply:
 
\begin{theorem}
\label{reisecke}
Let an $A$-biprojective  left bialgebroid $(U,A)$ be both left and right
Hopf. If
$M$ is an anti Yetter-Drinfel'd module, then $T \coloneqq  \Hom^\uhhu(-, M)$ yields a weak trace functor $\ucomod \to \kmod$,
which is unital if $M$ is stable (and vice versa).
In particular, there are isomorphisms 
$$
\tr_{\scriptscriptstyle{N,P}} \colon \Hom^\uhhu(N \otimes_\ahha P, M) \stackrel\simeq\lra \Hom^\uhhu(P \otimes_\ahha N, M), 
$$
being functorial in  $N, P \in \ucomod$ that explicitly read
\begin{equation}
\label{hungrr1}
(\tr_{\scriptscriptstyle{N,P}} f)(p \otimes_\ahha n) =  f(n \otimes_\ahha p_{[0]}) p_{[1]}
\end{equation}
for $n \in N$ and $p \in P$.
%, with inverse $(\tr_{\scriptscriptstyle{N,P}}^{-1} g)(n \otimes_\ahha p)  \coloneqq  g(p_{(0)} \otimes_\ahha n) p_{(-1)}$.
\end{theorem}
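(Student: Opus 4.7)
The plan is to invoke the general Lemma~\ref{funkyfunk} in conjunction with Theorem~\ref{tegel2}. Lemma~\ref{keineDokumentecomod} establishes that $\ucomod$ is biclosed under the standing hypotheses, and Theorem~\ref{tegel2} identifies aYD modules with objects of $\cZ_{\ucomod^\op}(\ucomod)$ and stable aYD modules with those of the full subcategory $\cZ'_{\ucomod^\op}(\ucomod)$. Lemma~\ref{funkyfunk} then immediately yields that $T = \Hom^\uhhu(-,M)$ is a weak trace functor whose unitality corresponds precisely to $M$ being stable; this handles the qualitative half of the statement without any further computation.

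Making the trace map explicit amounts to unfolding the defining composition \eqref{cupper}, specialised to the adjunctions \eqref{coad1} and \eqref{coad2a} and to the central structure \eqref{michigan2}, following the recipe used in the proof of Theorem~\ref{ganzleerheute}. Starting from $f \in \Hom^\uhhu(N \otimes_\ahha P, M)$, the adjunction $\xi$ of \eqref{coad1} sends $f$ to $n \mapsto f(n \otimes_\ahha -)$; postcomposition with $\tau_P$ from \eqref{michigan2} produces the element
\[
p \;\mapsto\; \bigl(f(n \otimes_\ahha p_{[0]})_{[0]} \otimes_\ahha f(n \otimes_\ahha p_{[0]})_{[1]}\bigr) \mpsqact p_{[1]}
\;=\; f(n \otimes_\ahha p_{[0]})_{[0]}\, p_{[1][+]} \otimes_\ahha p_{[1][-]}\, f(n \otimes_\ahha p_{[0]})_{[1]},
\]
where the second equality uses the definition of $\mpsqact$ in \eqref{wszw1}. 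Finally, $\zeta^{-1}$ (read, as in the proof of Theorem~\ref{ganzleerheute}, after swapping the roles of its two arguments) is given by $(\id \otimes_\ahha \gve)$ via \eqref{coad2a}, so
\[
(\tr_{\scriptscriptstyle{N,P}} f)(p \otimes_\ahha n) \;=\; f(n \otimes_\ahha p_{[0]})_{[0]}\, p_{[1][+]}\; \gve\bigl(p_{[1][-]}\, f(n \otimes_\ahha p_{[0]})_{[1]}\bigr).
\]
Writing $m = f(n \otimes_\ahha p_{[0]})$, the remaining task is the identity $m_{[0]}\, p_{[1][+]}\, \gve(p_{[1][-]}\, m_{[1]}) = m\, p_{[1]}$. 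This is exactly the simplification already carried out in the proof of Theorem~\ref{tegel2} by means of the right-Hopf-algebroid identity \eqref{Tch8}, together with the interpretation of the scalar multiplication $ma$ on the right $U$-module $M$ as the $\bract$ action $m \bract a$. This establishes \eqref{hungrr1}.

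For unitality in the stable case, it suffices to specialise to $N = A$. Identifying $\Hom^\uhhu(A \otimes_\ahha P, M)$ with $\Hom^\uhhu(P, M)$ via the left unit constraint, \eqref{hungrr1} simplifies to $(\tr_{\scriptscriptstyle{A,P}} f)(p) = f(p_{[0]})\, p_{[1]}$. The left $U$-colinearity of $f$, combined with the naturality of the induced right coaction \eqref{soestrene} from \S\ref{paco}, yields $f(p_{[0]}) \otimes_\ahha p_{[1]} = f(p)_{[0]} \otimes_\ahha f(p)_{[1]}$, after which the stability identity \eqref{stableagain} collapses $f(p)_{[0]} f(p)_{[1]}$ to $f(p)$, as required. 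The main obstacle, as in Theorem~\ref{ganzleerheute}, is purely computational: passing from the raw composition $\zeta^{-1} \circ \Hom^\uhhu(N, \tau_P) \circ \xi$ to the compact formula \eqref{hungrr1} hinges on the right Hopf identity \eqref{Tch8} to reduce $m_{[0]} p_{[1][+]} \gve(p_{[1][-]} m_{[1]})$ back to $m\, p_{[1]}$; once this is noted, the rest is a direct specialisation of the abstract framework of Lemma~\ref{funkyfunk}.
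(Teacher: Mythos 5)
Your proposal is correct and follows essentially the same route as the paper: invoke Lemma \ref{funkyfunk} together with Theorem \ref{tegel2} (and Lemma \ref{keineDokumentecomod}) for the abstract statement, unfold the composition \eqref{cupper} through the adjunctions \eqref{coad1} and \eqref{coad2a} and the central structure \eqref{michigan2} to reach the intermediate expression $m_{[0]}\,p_{[1][+]}\bract\gve(p_{[1][-]}m_{[1]})$ and collapse it to $m\,p_{[1]}$ via the right Hopf identities, and then obtain unitality for $N=A$ from the colinearity of $f$ together with \eqref{stableagain}. The only (immaterial) divergence is that the paper's displayed chain cites \eqref{Tch7} for the collapsing step where you cite \eqref{Tch8}, matching the analogous reduction in the proof of Theorem \ref{tegel2}; both lead to the same simplification.
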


\begin{proof}
  Similar to the proof of Theorem \ref{ganzleerheute}, there are  only two things left to show: first, 
 that in this context the general trace maps \eqref{cupper} assume the explicit form \eqref{hungrr1}. Indeed, for $f \in \Hom_\uhhu(N \otimes_\ahha P, M)$, we have
\begin{equation*}
\begin{array}{rcl}
&&
  (\zeta^{-1} \circ \Hom_\uhhu(N,\tau_\pehhe) \circ \xi \circ f)(p \otimes_\ahha n)
\\
  &
 \stackrel{\scriptscriptstyle{\eqref{coad2a}}}{=}
  &
\big(\Hom_\uhhu(N,\tau_\pehhe) \circ \xi \circ f\big)(n)'(p) \gve\pig(\big(\Hom_\uhhu(N,\tau_\pehhe) \circ \xi \circ f\big)(n)''(p) \pig)
 \\
  &
 \stackrel{\scriptscriptstyle{\eqref{michigan2}, \eqref{wszw1}}}{=}
 &
(\xi \circ f)(n)(p_{[0]})_{[0]}p_{[1][+]} \bract \gve\pig(p_{[1][-]} (\xi \circ f)(n)(p_{[0]})_{[1]}\pig)
 \\
  &
 \stackrel{\scriptscriptstyle{\eqref{Tch7}}}{=}
  &
(\xi \circ f)(n)(p_{[0]})p_{[1]}
 \\
  &
 \stackrel{\scriptscriptstyle{\eqref{coad1}}}{=}
  &
f(n \otimes_\ahha p_{[0]})p_{[1]},
\end{array}
\end{equation*}
 which was to prove.
 Unitality of this trace functor, that is, $\tr_{\scriptscriptstyle A, P} = \id$, is then immediate in case $M$ is stable: since for $N = A$ the left $U$-colinearity of an element $f \in \Hom^\uhhu(P,M)$ also implies right $U$-colinearity in the sense of
$
f(p_{[0]}) \otimes_\ahha p_{[1]} = f(p)_{[0]} \otimes_\ahha f(p)_{[1]}, 
$
we have
\begin{equation*}
%\label{unitality}
(\tr_{\scriptscriptstyle A, P} f)(p) = f(p_{[0]}) p_{[1]} = f(p)_{[0]} f(p)_{[1]}  \stackrel{\scriptscriptstyle{\eqref{stableagain}}}{=}
  f(p)
\end{equation*}
for all $p \in P$.
%All remaining properties in Definition \ref{kaledin} of a trace functor now follow from those of the central structure~$\tau$; for example, Eq.~\eqref{trace} can be seen directly from the hexagon axiom \eqref{tarrega8}.
\end{proof}

\begin{rem}
  \label{quelquechose2}
  Dually to Remark \ref{quelquechose}, this trace functor can be analogously enhanced by introducing more coefficients: if $M$ is an aYD module and $Q$ a Yetter-Drinfel'd module, then $\HOM^r(Q, M)$ is again an aYD module as proven in the second
  part of Corollary \ref{wasasesam2}. Hence, if this aYD module is stable (which is not equivalent to $M$ being stable),
  by
  $
\xi \colon   \Hom^\uhhu(P \otimes_\ahha N \otimes_\ahha Q, M)
  \simeq 
\Hom^\uhhu(P \otimes_\ahha N, \HOM^r(Q, M))
% = \Hom_\uhhu(P \otimes_\ahha N, \Hom_\Aopp(Q, M))
  $, 
  it is possible to construct a trace functor
  $$
  T  \coloneqq  \Hom^\uhhu(- \otimes_A Q, M),
$$
  with $M$ and $Q$ as above, and corresponding trace map
  $$
\tr_{\scriptscriptstyle{N,P}}\colon \Hom^\uhhu(N \otimes_\ahha P \otimes_\ahha Q, M) \stackrel\simeq\lra \Hom^\uhhu(P \otimes_\ahha N \otimes_\ahha Q, M), 
  $$
for arbitrary $N, P \in \ucomod$.
\end{rem}

%\subsection{Cyclic cohomology and cyclic structures on $\Cotor$}

\appendix

\addtocontents{toc}{\protect\setcounter{tocdepth}{1}}

\section{Left and right Hopf algebroids}
\label{bialgebroids}

\subsection{Bialgebroids}
\label{bialgebroids1}
A left bialgebroid $(U, A, \gD, \gve, s, t)$, introduced first in {\cite{Tak:GOAOAA}} and rediscovered a couple of times, 
is a generalisation of a $k$-bialgebra to a bialgebra object over a noncommutative base ring $A$, consisting of a compatible algebra and coalgebra structure over $\Ae$ resp.\ over $A$. In particular, there is a ring homomorphism resp.\ antihomomorphism $s,t \colon  A \to U$ ({\em source} resp.\ {\em target}) that induce four commuting $A$-module structures on $U$, denoted by
\begin{equation}
  \label{pergolesi}
  a \blact b \lact u \ract c \bract d  \coloneqq  t(c)s(b)us(d)t(a)
\end{equation}
  for $u \in U, \, a,b,c,d \in A$, which we abbreviate by
$
\due U {\blact \lact} {\ract \bract}
$, depending on the relevant action(s) in question.
 Moreover, apart from the multiplication, $U$ also carries a 
 comultiplication $\Delta \colon  U \to U \times_\ahha U \subset \due U {} \ract \otimes_\ahha \due U \lact {}, \ u \mapsto u_{(1)} \otimes_\ahha u_{(2)}$ and a counit $\gve \colon  U \to A$ subject to certain identities that
at some points differ from those in the bialgebra case, see \cite{BoeSzl:HAWBAAIAD, Tak:GOAOAA} or elsewhere. To the $\Ae$-ring
$$
U \times_{\scriptscriptstyle A} U    \coloneqq 
   \big\{ {\textstyle \sum_i} u_i \otimes  v_i \in U_{\!\ract}  \otimes_\ahha \!  \due U \lact {} \mid {\textstyle \sum_i} a \blact u_i \otimes v_i = {\textstyle \sum_i} u_i \otimes v_i \bract a,  \ \forall a \in A  \big\}
$$
we usually refer to as {\em Sweedler-Takeuchi product}.
   
\subsection{Left and right Hopf algebroids} 
\label{bialgebroids2}
Generalising Hopf algebras ({\em i.e.}, bialgebras with an antipode) to noncommutative base rings is a much more challenging task.
If one wants to avoid the abundance of structure maps that accompany the notion of a {\em full} Hopf algebroid as in \cite{BoeSzl:HAWBAAIAD}, that is, {\em two} bialgebroid structures (meaning two coproducts, two counits, eight $A$-actions on the total space, etc.) and an antipode map as sort of intertwiner between these, one renounces on the idea of an antipode and rather requires a certain Hopf-Galois map to be invertible {\cite{Schau:DADOQGHA}}, which even leads to a more general concept than that of full Hopf algebroids. 
More precisely, if $(U, A)$ is a left bialgebroid, consider the maps
\begin{equation}
  \label{nochmehrRegen}
\begin{array}{rclrcl}
\ga_\ell  \colon  \due U \blact {} \otimes_{\Aopp} U_\ract &\to& U_\ract  \otimes_\ahha  \due U \lact,
& u \otimes_\Aopp v  &\mapsto&  u_{(1)} \otimes_\ahha u_{(2)}  v, \\
\ga_r  \colon  U_{\!\bract}  \otimes_\ahha \! \due U \lact {}  &\to& U_{\!\ract}  \otimes_\ahha  \due U \lact,
&  u \otimes_\ahha v  &\mapsto&  u_{(1)}  v \otimes_\ahha u_{(2)},
\end{array}
\end{equation}
of left $U$-modules.
Then the left bialgebroid $(U,A)$ is called a {\em left Hopf algebroid} or simply {\em left Hopf} if $ \alpha_\ell $ is invertible and {\em right Hopf algebroid} or {\em right Hopf}
if this is the case for $\ga_r$. Adopting kind of Sweedler notations
\begin{equation*}
  \begin{array}{rcl}
u_+ \otimes_\Aopp u_-  & \coloneqq &  \alpha_\ell^{-1}(u \otimes_\ahha 1)
\\
   u_{[+]} \otimes_\ahha u_{[-]}  & \coloneqq &  \alpha_r^{-1}(1 \otimes_\ahha u),
\end{array}
  \end{equation*}
with, as usual, summation understood, one proves that for a left Hopf algebroid
\begin{eqnarray}
\label{Sch1}
u_+ \otimes_\Aopp  u_- & \in
& U \times_\Aopp U,  \\
\label{Sch2}
u_{+(1)} \otimes_\ahha u_{+(2)} u_- &=& u \otimes_\ahha 1 \quad \in U_{\!\ract} \! \otimes_\ahha \! {}_\lact U,  \\
\label{Sch3}
u_{(1)+} \otimes_\Aopp u_{(1)-} u_{(2)}  &=& u \otimes_\Aopp  1 \quad \in  {}_\blact U \! \otimes_\Aopp \! U_\ract,  \\
\label{Sch4}
u_{+(1)} \otimes_\ahha u_{+(2)} \otimes_\Aopp  u_{-} &=& u_{(1)} \otimes_\ahha u_{(2)+} \otimes_\Aopp u_{(2)-},  \\
\label{Sch5}
u_+ \otimes_\Aopp  u_{-(1)} \otimes_\ahha u_{-(2)} &=&
u_{++} \otimes_\Aopp u_- \otimes_\ahha u_{+-},  \\
\label{Sch6}
(uv)_+ \otimes_\Aopp  (uv)_- &=& u_+v_+ \otimes_\Aopp v_-u_-,
\\
\label{Sch7}
u_+u_- &=& s (\varepsilon (u)),  \\
\label{Sch8}
\varepsilon(u_-) \blact u_+  &=& u,  \\
\label{Sch9}
(s (a) t (b))_+ \otimes_\Aopp  (s (a) t (b) )_-
&=& s (a) \otimes_\Aopp s (b)
\end{eqnarray}
are true \cite{Schau:DADOQGHA}, 
where in  \eqref{Sch1}  we mean the Takeuchi-Sweedler product
\begin{equation*}
\label{petrarca}
   U \! \times_\Aopp \! U    \coloneqq 
   \big\{ {\textstyle \sum_i} u_i \otimes v_i \in {}_\blact U  \otimes_\Aopp  U_{\!\ract} \mid {\textstyle \sum_i} u_i \ract a \otimes v_i = {\textstyle \sum_i} u_i \otimes a \blact v_i, \ \forall a \in A \big\},
\end{equation*}
and if the left bialgebroid $(U,A)$ is right Hopf, in the same spirit one verifies  
\begin{eqnarray}
\label{Tch1}
u_{[+]} \otimes_\ahha  u_{[-]} & \in
& U \times^{\scriptscriptstyle A} U,  \\
\label{Tch2}
u_{[+](1)} u_{[-]} \otimes_\ahha u_{[+](2)}  &=& 1 \otimes_\ahha u \quad \in U_{\!\ract} \! \otimes_\ahha \! {}_\lact U,  \\
\label{Tch3}
u_{(2)[-]}u_{(1)} \otimes_\ahha u_{(2)[+]}  &=& 1 \otimes_\ahha u \quad \in U_{\!\bract} \!
\otimes_\ahha \! \due U \lact {},  \\
\label{Tch4}
u_{[+](1)} \otimes_\ahha u_{[-]} \otimes_\ahha u_{[+](2)} &=& u_{(1)[+]} \otimes_\ahha
u_{(1)[-]} \otimes_\ahha  u_{(2)},  \\
\label{Tch5}
u_{[+][+]} \otimes_\ahha  u_{[+][-]} \otimes_\ahha u_{[-]} &=&
u_{[+]} \otimes_\ahha u_{[-](1)} \otimes_\ahha u_{[-](2)},  \\
\label{Tch6}
(uv)_{[+]} \otimes_\ahha (uv)_{[-]} &=& u_{[+]}v_{[+]}
\otimes_\ahha v_{[-]}u_{[-]},  \\
\label{Tch7}
u_{[+]}u_{[-]} &=& t (\varepsilon (u)),  \\
\label{Tch8}
u_{[+]} \bract \varepsilon(u_{[-]})  &=&  u,  \\
\label{Tch9}
(s (a) t (b))_{[+]} \otimes_\ahha (s (a) t (b) )_{[-]}
&=& t(b) \otimes_\ahha t(a),
\end{eqnarray}
see \cite[Prop.~4.2]{BoeSzl:HAWBAAIAD},
where in  \eqref{Tch1} we denoted
\begin{equation*}  \label{petrarca2}
   U \times^{\scriptscriptstyle A} U    \coloneqq 
   \big\{ {\textstyle \sum_i} u_i \otimes  v_i \in U_{\!\bract}  \otimes_\ahha \!  \due U \lact {} \mid {\textstyle \sum_i} a \lact u_i \otimes v_i = {\textstyle \sum_i} u_i \otimes v_i \bract a,  \ \forall a \in A  \big\}.
\end{equation*}
If the left bialgebroid $(U,A)$ is simultaneously left and right Hopf, the compatibility between the two (inverses of the) Hopf-Galois maps comes out as:
\begin{eqnarray}
\label{mampf1}
u_{+[+]} \otimes_\Aopp u_{-} \otimes_\ahha u_{+[-]} &=& u_{[+]+} \otimes_\Aopp u_{[+]-} \otimes_\ahha u_{[-]}, \\
\label{mampf2}
u_+ \otimes_\Aopp u_{-[+]} \otimes_\ahha u_{-[-]} &=& u_{(1)+} \otimes_\Aopp u_{(1)-} \otimes_\ahha u_{(2)}, \\
\label{mampf3}
u_{\smap} \otimes_\ahha u_{{\smam}+} \otimes_\Aopp u_{[-]-} &=& u_{(2)[+]} \otimes_\ahha u_{(2)[-]} \otimes_\Aopp u_{(1)},
\end{eqnarray}
see \cite[Lem.~2.3.4]{CheGavKow:DFOLHA}.
A simultaneous left and right Hopf structure on a left bialgebroid still does not imply the existence of an antipode required in the definition of a full Hopf algebroid.
For example, the universal enveloping algebra $V\!L$ of a Lie-Rinehart algebra $(A,L)$ constitutes a left bialgebroid that
is both left and right Hopf but still does not admit an antipode in general.

However, in case $(U, A) = (H, k)$ is actually a Hopf algebra over a field $k$, the invertibility of $\ga_\ell$ guarantees the existence of the antipode $S$ and the invertibility of $\ga_r$ the existence of $S^{-1}$. More precisely, in these cases 
for any $h \in H$ we had
\begin{equation}
  \label{sesam}
  \begin{array}{rcl}
    h_+ \otimes_k h_- &=& h_{(1)} \otimes S(h_{(2)})
    \\[2pt]
    h_{[+]} \otimes_k h_{[-]} &=& h_{(2)} \otimes S^{-1}(h_{(1)}).
  \end{array}
  \end{equation}
%for any $h \in H$.

\addtocontents{toc}{\SkipTocEntry}
\section*{Acknowledgements}
The author would like to thank the referee for valuable comments that helped to improve both content and presentation.

\providecommand{\bysame}{\leavevmode\hbox to3em{\hrulefill}\thinspace}
\providecommand{\MR}{\relax\ifhmode\unskip\space\fi M`R }
% \MRhref is called by the amsart/book/proc definition of \MR.
\providecommand{\MRhref}[2]{%
  \href{http://www.ams.org/mathscinet-getitem?mr=#1}{#2}}
\providecommand{\href}[2]{#2}

\end{document}